\renewcommand{\geq}{\geqslant}
\renewcommand{\leq}{\leqslant}
\renewcommand{\epsilon}{\varepsilon}
\newtheorem{thm}{Theorem}
\newtheorem{lem}[thm]{Lemma}
\theoremstyle{definition}
\newtheorem{defn}[thm]{Definition}
\theoremstyle{remark}
\newtheorem{rem}[thm]{Remark}
\theoremstyle{remark}
\colorlet{darkgreen}{green!50!black}
\definecolor{darkseagreen}{rgb}{0.56, 0.74, 0.56}
\definecolor{lightcyan}{rgb}{0.88, 1.0, 1.0}
\definecolor{lightblue}{rgb}{0.68, 0.85, 0.9}
\definecolor{palecerulean}{rgb}{0.61, 0.77, 0.89}
\definecolor{lgreen} {RGB}{180,210,100}
\definecolor{dblue}  {RGB}{20,66,129}
\definecolor{ddblue} {RGB}{11,36,69}
\definecolor{lred}   {RGB}{220,0,0}
\definecolor{nred}   {RGB}{224,0,0}
\definecolor{norange}{RGB}{230,120,20}
\definecolor{nyellow}{RGB}{255,221,0}
\definecolor{ngreen} {RGB}{98,158,31}
\definecolor{dgreen} {RGB}{78,138,21}
\definecolor{nblue}  {RGB}{28,130,185}
\definecolor{jblue}  {RGB}{20,50,100}
\definecolor{Apricot} {RGB}{255, 170, 123} 
\definecolor{dpurple}  {RGB}{53,21,93}
\def\testb#1{\testb@i#1,,\@nil}%
\def\testb@i#1,#2,#3\@nil{%
  \draw[->, thick] (O) --++(#1);
  \ifx\relax#2\relax\else\testb@i#2,#3\@nil\fi}
\def\testbb#1{\testbb@i#1,,\@nil}%
\def\testbb@i#1,#2,#3\@nil{%
  \draw (O) --++(#1);
  \ifx\relax#2\relax\else\testbb@i#2,#3\@nil\fi}
\date{\today}
\author{Am\'elie Trotignon\thanks{Institute for Algebra, Johannes Kepler University, Linz, Austria; \texttt{amelie.trotignon@jku.at}.  A. Trotignon was supported by the Austrian Science Fund (FWF) grant FWF05004.}}
\title{Discrete harmonic functions in the three-quarter plane}
\begin{document}
\maketitle

\vspace{-4mm}

\begin{abstract}
In this article we are interested in finding positive discrete harmonic functions with Dirichlet conditions in three quadrants. Whereas planar lattice (random) walks in the quadrant have been well studied, the case of walks avoiding a quadrant has been developed lately. We extend the method in the quarter plane -- resolution of a functional equation via boundary value problem using a conformal mapping -- to the three-quarter plane applying the strategy of splitting the domain into two symmetric convex cones. We obtain a simple explicit expression for the algebraic generating function of harmonic functions associated to random walks avoiding a quadrant.
\footnote{{\it Keywords.} Random walks in cones; discrete harmonic functions; conformal mapping }
\end{abstract}

\section{Introduction}
\label{sub:intro}

In the two-dimensional continuous case, a continuous harmonic function is a function for which the standard Laplacian $\Delta=\frac{\partial^{2}}{\partial x^{2}}+\frac{\partial^{2}}{\partial y^{2}}$ is zero. For example, in the cone $\{ r\exp(it): 0\leq r < \infty, 0\leq t\leq \eta\}$ of opening angle $\eta$,  then ${u(r,t)=r^{\pi/\eta}\sin \left( \frac{\pi}{\eta}t \right)}$ is the unique (up to multiplicative constants) positive harmonic function equal to zero on the boundary. 

In the two-dimensional discrete case, consider the simplest Laplacian operator 
\[
\frac{1}{4}[f(i+1,j)+f(i,j+1)+{f(i-1,j)}+f(i,j-1)]-f(i,j).
\] 
The function $f(i,j)=i$ is positive harmonic in the right half-plane and equal to zero on $y$-axis. In the positive quadrant, the function $f(i,j)=ij$ is positive harmonic with Dirichlet boundary conditions.  What about the three-quarter plane? Surprisingly, harmonic functions in the three-quarter plan have more complex expressions, see equations \eqref{thm:expression-D-3/4} and \eqref{thm:expression-H}. Unlike the two last examples, the three quadrants is not a convex cone and finding positive harmonic functions is more complicated. Although  the difference between the intersection of two half-planes (the quarter plane) and union of two half-planes (the three-quarter plane) seems geometrically insignificant at first sight, Bousquet-M\'elou has already noticed this complexification with a paths enumeration point of view: in \cite{BM-16} Bousquet-M\'elou notes that studying
the simple walks (with cardinal points jumps) is as difficult as studying the Gessel walks in the quadrant (a famous model with jumps $\{{\sf E},{\sf NE},{\sf W},{\sf SW}\}$, solved after ten years of research). In this article, we develop a systematic approach to find positive discrete harmonic functions in the three-quarter plane with Dirichlet conditions. 

\paragraph{Context.}
In the discrete case, planar lattice random walks in cones occupy a central position in probability and combinatorics. Harmonic functions play an important role in probability theory. Doob $h$-transform is a way to build conditioned random processes in cones from a random process and a positive harmonic function vanishing on the boundary of the cone. Finding positive harmonic functions for random processes is therefore a natural objective in the study of confined random walks. There are very few ways to compute discrete harmonic functions (see \cite{Ra-14} and references therein). 

Most of walks study have been done on the quadrant, or more generally in convex cones.  A natural generalization is to consider other domains of restriction and determine how the framework of (random) walks is different from the quarter plane to this region. Recently, non-convex cones, in particular the three-quarter plane
\begin{equation}
\label{eq:3quarters}
     \mathcal C=\{(i,j)\in\mathbb Z^2: i\geq0 \text{ or } j\geq0\},
\end{equation}
have been examined. Unlike the quarter plane, where generating functions involve only positive powers of the variables, in the three-quarter plane we face both positive and negative powers of the variables, making convergence issues arise. A natural strategy consists of cutting the three quadrants in some convex cones in which the generating functions are convergent. In \cite{BM-16}, Bousquet-M\'elou sees the three-quarter plane as the union of three quadrants and obtains some results for the simple and diagonals walks avoiding a quadrant. Integral expressions for the generating function of walks avoiding a quadrant  with symmetric step sets for walks are derived in \cite{RaTr-18}, where the three-quarter plane is seen as the union of two symmetric convex cones of opening angle $3\pi/4$.  Asymptotics of the number of excursions of walks with small steps in the three-quadrant is computed in \cite{Mu-19} by Mustapha.  In this article, following \cite{DeWa-15, BoRaSa-14}, Mustapha expresses the critical exponent of harmonic functions in three quadrants as a function of the critical exponent of harmonic functions in a quadrant.  When this exponent is not rational, then the generating function of walks is not D-finite and \cite[Thm~1.3]{Mu-19} proves that the generating function of the walks of the 51 non-singular step sets with infinite group are not D-finite in the three-quarter plane (recall that neither are they in the quadrant).

In this article, we  find an explicit expression for generating functions of discrete harmonic functions associated to random walks avoiding a quadrant with a mixed approach of \cite{Ra-14} and \cite{RaTr-18}. We focus on the analytic approach developed in \cite{Ra-14}, which consists in writing a functional equation for the generating function for a fixed harmonic function, transforming this functional equation into a boundary value problem and finally solving this problem, which results in an explicit expression for the generating function. We begin by making some assumptions on both the random walks and discrete harmonic functions for these random walks. 

\begin{enumerate}[label=(H\arabic*)]
\item \label{H1}
The walk is homogeneous inside the cone with transition probabilities $\{p_{i,j}\}_{-1\leq i,j\leq 1}$ to the nearest neighbors; 
\item \label{H2}
The transition probabilities are symmetric ($p_{i,j}=p_{j,i}$) and $p_{0,0}=p_{-1,1}=p_{1,-1}=0$;
\item \label{H3}
In the list $p_{1,1}$, $p_{1,0}$, $p_{0,-1}$, $p_{-1,-1}$, $p_{-1,0}$, $p_{0,1}$, there are no two consecutive zeros;
\item \label{H4}
The drifts are zero: $\sum_{-1\leq i,j \leq 1}ip_{i,j}=0$ and $\sum_{-1\leq i,j \leq 1}jp_{i,j}=0$.
\end{enumerate}

We first suppose with \ref{H1} the random walks to be homogeneous with small steps, which therefore provide us techniques and tools developed in \cite{FaIaMa-17}. Moreover, with assumption \ref{H2}, we suppose the walks to be symmetric with no anti-diagonal jumps. If we do not exclude non-zero probabilities $p_{-1,1}$ and $p_{1,-1}$, as noted in \cite[Sec.\ 2]{RaTr-18}, after the change of variable $\varphi$ we would end up with random walks with non-zero probabilities $p_{-2,1}$ and $p_{2,1}$ respectively for big jumps of vector $(-2,-1)$ and $(2,1)$. Note that the analytic theory for walks with big steps is still incomplete \cite{CoBo-83,Co-92}. A related consequence would be the presence of more unknowns in the functional equation \eqref{eq:functional_equation_sym} (namely $\sum_{i\geq 1}f(i-2,i)x^{i}y^{i}$ and $\sum_{i\geq 1}f(i,i-2)x^{i}y^{i}$), making the problem not solvable with our techniques. The third hypothesis \ref{H3} is not essential in the study, but automatically excludes degenerate cases which could have been studied with easier methods.  Finally, we assume the walks to have zero drifts with \ref{H4}. Note that the zero and non-zero drifts are two very different frameworks \cite{Du-14}, and most results are given in the non-zero drift case. Furthermore, \ref{H4} makes the random walks hit the negative axes almost surely. Observe that the hypotheses \ref{H2} and \ref{H4} are non-redundant: a model can satisfy \ref{H2} and not \ref{H4}\footnoteN{For example the random walk with transition probabilities $p_{-1,0}=p_{0,-1}=1/4$ and $p_{1,1}=1/2$.} and vice versa\footnoteN{For instance the random walk with transition probabilities $p_{1,0}=p_{1,1}=p_{0,-1}=1/5$ and $p_{-1,0}=2/5$.}. Figure~\ref{fig:Walks-H1H2H3} illustrates a possible transition probabilities set which satisfies \ref{H1}, \ref{H2} and \ref{H3}. We will use the combinatorial step set terminology: the simple model is when $p_{1,0}=p_{1,0}=p_{-1,0}=p_{0,-1}=1/4$, Gouyou-Beauchamps model has $p_{1,0}=p_{-1,1}=p_{-1,0}=p_{1,-1}=1/4$ and Gessel model is $p_{1,0}=p_{1,1}=p_{-1,0}=p_{-1,-1}=1/4$. In addition, we ask the associated discrete harmonic functions $f=\left(f(i,j)\right)_{(i,j)\in\mathcal{C}}$ to satisfy four properties:

\begin{enumerate}[label=(P\arabic*)]
\item \label{P1} 
For all $i\geq 1$ or $j \geq 1$, $f(i,j)=\sum_{-1\leq i_{0},j_{0} \leq 1}p_{i_{0},j_{0}}f(i+i_{0},j+j_{0})$;
\item \label{P2} 
If $i\leq 0$, $f(i,0)=0$ and if $j\leq 0$, then $f(0,j)=0$;
\item \label{P3} 
If $i>0$ or $j>0$ then $f(i,j)>0$. 
\end{enumerate} 
We also make the hypothesis \ref{P4} which, as we shall see (Remark~\ref{rem:symm-cdt}), will be automatically satisfied for positive harmonic functions associated to symmetric step sets.
\begin{enumerate}[label=(P4)]
\item \label{P4}
For all $(i,j)\in\mathcal{C}$, $f(i,j)=f(j,i)$.
\end{enumerate}

The first property \ref{P1} is the harmonicity condition, the second one \ref{P2} is the zero condition on the boundary, the third one \ref{P3} is the positivity condition within the cone and the last one \ref{P4} is a symmetry condition (coming then from \ref{H2}). In other words, we are interested in symmetric positive discrete harmonic functions for symmetric random walks with small steps constrained to the boundary of $\mathcal{C}$. Thereafter, for such a fixed harmonic function $f$, we consider its generating function 
\begin{equation}
\label{eq:generating_function_3/4}
	H(x,y)=\sum_{(i, j)\in \mathcal C} f(i, j)x^{i-1}y^{j-1}.
\end{equation}
We observe here one of the main difficulties of the three quadrants: the series $H(x,y)$ is not convergent. Thereafter, we may see this object as a formal series. The goal is to find a formal explicit expression of $H(x,y)$, and from this expression, it is not difficult to get the $f(i,j)$ for all $(i,j)\in\mathcal{C}$, with a formal power series expansion. 

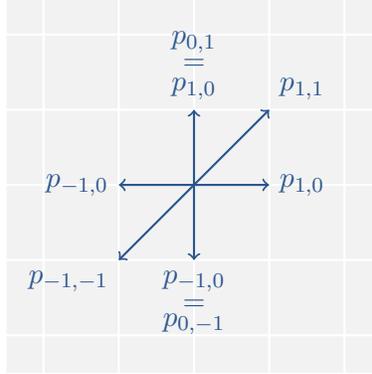
\begin{figure}[t]
\centering
\begin{tikzpicture}
\begin{scope}[scale=1, xshift=0 cm, yshift=0 cm]
\draw[white, fill=gray!10] (-2.5,-2.5) -- (-2.5,2.5) -- (2.5,2.5) -- (2.5,-2.5);
\draw[white, thick] (-2.5,-2.5) grid (2.5,2.5);
\draw[dblue!90, thick, ->] (0,0)--(1,0) node[ right]{$p_{1,0}$};
\draw[dblue!90, thick, ->] (0,0)--(-1,0) node[ left]{$p_{-1,0}$};
\draw[dblue!90, thick, ->] (0,0)--(0,-1) node[below ]{$p_{-1,0}$};
\draw[dblue!90, thick] (0,-1.6) node[]{$=$};
\draw[dblue!90, thick] (0,-1.85) node[]{$p_{0,-1}$};
\draw[dblue!90, thick, ->] (0,0)--(0,1) node[above ]{$p_{1,0}$};
\draw[dblue!90, thick] (0,1.6) node[ ]{$=$};
\draw[dblue!90, thick] (0,1.9) node[ ]{$p_{0,1}$};
\draw[dblue!90, thick, ->] (0,0)--(1,1) node[above right]{$p_{1,1}$};
\draw[dblue!90, thick, ->] (0,0)--(-1,-1) node[below left]{$p_{-1,-1}$};
\end{scope}
\end{tikzpicture}
\caption{Models satisfying \ref{H1}, \ref{H2} and \ref{H3}.}
\label{fig:Walks-H1H2H3}
\end{figure} 

\paragraph{Method.}
Working in non-convex cones, here in particular in the three-quarter plane, raises convergence problems. Indeed, it is not difficult to write a functional equation from the properties of harmonicity \ref{P1} and vanishing conditions on the boundary \ref{P2}. However, even if this functional equation \eqref{eq:functional_equation_3/4} seems close to the quarter plane case \eqref{eq:functional_equation_1/4} solved in \cite{Ra-14}, it is in fact fundamentally different: the three quadrants case involves negative powers of $x$ and $y$ making the series not convergent anymore. To remedy this difficulty, we follow the same strategy as in \cite{RaTr-18}: we divide the three quadrants into two symmetric convex cones  and the diagonal (see Figure~\ref{fig:some_sections}) and write a system of two functional equations (one for each cone).  At first sight, this cut increases even more the level of difficulty: we have now two functional equations and more unknowns to deal with, but in the particular case of symmetric harmonic functions and symmetric transition probabilities, the system is composed of twice the same equation and the problem can be seen as a slightly different variation of the quadrant case (see Figure~\ref{fig:change-3/4}). In this symmetric case, we are able to use the tools and methods of \cite{Ra-14}: transform the functional equation into a boundary value problem, solve it and write an explicit expression for the generating function. Finally, let us point out that this method of splitting the domain in two octants can also be applied in the quadrant in the symmetric case, and allow us to give alternative proofs of \cite{Ra-14}.

\paragraph{Main results.}

Our main result is an expression of the diagonal section 
\[
D(x,y)=D_\varphi(xy)=\sum_{i\geq 1}f(i,i)x^{i-1} y^{i-1},
\]
see Lemma~\ref{lem:pol-harmfct}. Let $\widetilde{w}_{\varphi}$ be a conformal mapping (see Section~\ref{sub:ConformalGluingFunction}, $\widetilde{w}_{\varphi}$ depends on the step set and satisfies an algebraic differential equation and can be simply expressed in terms of sine and arcsine functions) and $G_{\varphi}$ an explicit algebraic function defined from $\widetilde{w}_{\varphi}$.  The generating function $D(x,y)$ of discrete harmonic functions \emph{not necessarily positive} (satisfying \ref{P1} and \ref{P2}) can be expressed as 
\begin{equation}
\label{eq:resultD}
D(x,y)=
\displaystyle\frac{P(\widetilde{w}_{\varphi}(xy))}{G_{\varphi}(xy)}, \quad P\in \mathbb{R}[y].
\end{equation}
In particular, taking $P$ of degree 1, we get the \emph{unique positive} discrete harmonic function (satisfying \ref{P1}, \ref{P2} and \ref{P3}), see Theorem~\ref{thm:expression-D-3/4}.

%
%For a \textit{family of discrete harmonic function not necessarily positive} (satisfying \ref{P1} and \ref{P2}), the generating function $D(x,y)$ can be expressed as 
%\begin{equation}
%\label{eq:resultD}
%D(x,y)=
%\displaystyle\frac{P(\widetilde{w}_{\varphi}(xy))}{G_{\varphi}(xy)} \quad \text{(family of discrete harmonic functions)},
%\tag{\addtocounter{equation}{1}\theequation a}
%\end{equation}
%with $P$ is a polynomial in $\mathbb{R}[y]$, $\widetilde{w}_{\varphi}$ a conformal mapping (see Section~\ref{sub:ConformalGluingFunction}, $\widetilde{w}_{\varphi}$ satisfies an algebraic differential equation and can be simply expressed in terms of sine and arcsine functions) and $G_{\varphi}$ an explicit function defined from $\widetilde{w}_{\varphi}$. In particular, taking $P$ of degree 1, we get the \textit{unique positive discrete harmonic function} (satisfying \ref{P1}, \ref{P2} and \ref{P3}) and the generating function $D(x,y)$ has exactly the same asymptotics that the ones found in the article of Mustapha \cite{Mu-19}. After some simplifications, we have
%
%\begin{equation}
%D(x,y)= \displaystyle\frac{C\widetilde{w}_{\varphi}(xy)}{G_{\varphi}(xy)} =C'\frac{\sqrt{\widetilde{W}_\varphi(xy)}}{\sqrt{\widetilde{\delta}_\varphi(xy)}} \quad \text{(unique positive harmonic function, $P$ is of degree 1)},
%\tag{\theequation b}
%\end{equation}
%with $C$ and $C'$ positive constants, $\widetilde{W}_\varphi$ a conformal mapping and $\widetilde{\delta}_\varphi$ a polynomial. 

This expression of $D_{\varphi}(y)$ suffices to give a complete solution to the problem and Theorem \ref{thm:expression-H} gives a formal explicit expression for $H(x,y)$.  This formal series is expressed with the kernel $K(x,y)$, a polynomial in $x$ and $y$, and two conformal mappings  $W_\varphi(x)$ and $\widetilde{W}_\varphi(y)$. Each quantity depends on the transition probabilities of the random walk.

As mentioned earlier, for the simple step set, the positive discrete harmonic function in the half-plane  is given by $f(i,j)=i$ with critical exponent $1$. In the positive quadrant, the positive discrete harmonic function is given by $f(i,j)=ij$ with critical exponent $2$. In this case, the value of the critical exponent in the quadrant (intersection of two half-planes) is twice the value of the critical exponent in the half-plane. Along our study, we find similar relations on harmonic functions in various quadrants (see Lemma~\ref{lem:angle-3/4} and Equation \eqref{eq:angle-1/4}). The growth of harmonic functions can be expressed from the angle $\theta$ defined by \cite[Sec.~1.2]{FaRa-11}
\begin{equation}
\label{eq:theta}
 \theta = \arccos \left( - \frac{\displaystyle \sum_{-1\leq i,j\leq 1} ijp_{i,j}}{\sqrt{\left(\displaystyle\sum_{-1\leq i,j\leq 1}i^{2}p_{i,j}\right)\cdot\left(\displaystyle\sum_{-1\leq i,j\leq 1}j^{2}p_{i,j}\right)}}\right).
\end{equation}
In the three-quarter plane, the critical exponent of the harmonic function is $\pi/(2\pi-\theta)=\pi/(2\theta_{\varphi})$.  After the decomposition of the three quadrants and the changes of variables, the random walk in the three quadrants is equivalent to an inhomogeneous random walk in the half-plane (see Figure~\ref{fig:walks-half-to-quarter-plane}), with associated angle $\theta_{\varphi}=\pi-\theta/2$ in each quadrant (see Figure~\ref{fig:theta-3/4}). We also recover the factor two between harmonic functions in the quadrant and harmonic functions in the half-plane: in the quarter plane, the critical exponent is $\pi/\theta=\pi/(2\theta_{\psi})$. With the same reasoning as the three quadrants case, a random walk associated with angle $\theta$ can be seen as an inhomogeneous random walk in the half-plane, with associated angle $\theta_{\psi}=\theta/2$ in each quadrant (see Figure~\ref{fig:theta-1/4}).

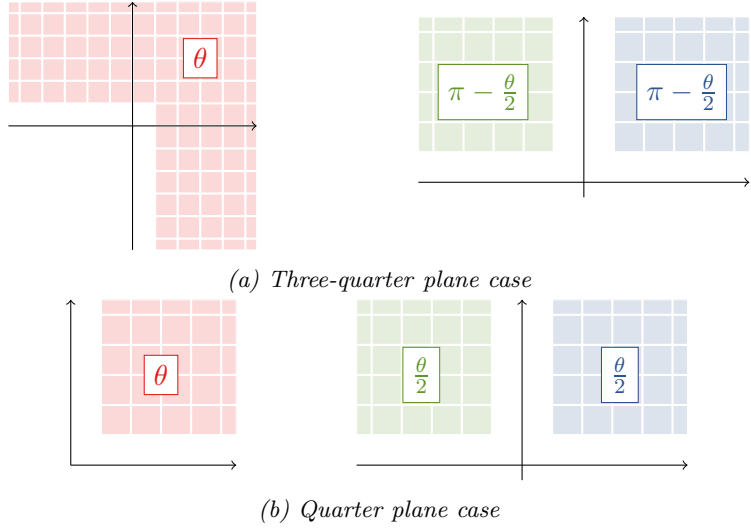
\begin{figure}[t]
\centering
\begin{subfigure}[b]{1\textwidth}
\centering 
\begin{tikzpicture}
\begin{scope}[scale=0.3, yshift=2.5cm]
\draw[white, fill=nred!15] (1,5.5) -- (5.5,5.5) -- (5.5,-5.5) -- (1,-5.5);
\draw[white, fill=nred!15] (1,5.5) -- (-5.5,5.5) -- (-5.5,1) -- (1,1);
\tikzstyle{quadri}=[rectangle,draw,fill=white]
\draw[white, thick] (0,-5.5) grid (5.5,5.5);
\draw[white, thick] (-5.5,0) grid (0,5.5);
\draw[->] (0,-5.5) -- (0,5.5);
\draw[->] (-5.5,0) -- (5.5,0);
\node[nred!90,quadri] at (3,3) {{$\theta$}};
\end{scope}

\begin{scope}[scale=0.4, xshift=15cm]
\tikzstyle{quadri}=[rectangle,draw,fill=white]
\draw[white, fill=dblue!15] (1,1) -- (1,5.5) -- (5.5,5.5) -- (5.5,1);
\draw[white, fill=dgreen!15] (-1,1) -- (-5.5,1) -- (-5.5,5.5) -- (-1,5.5);
\draw[white, thick] (-5.5,0) grid (5.5,5.5);
\draw[->] (0,-0.5) -- (0,5.5);
\draw[->] (-5.5,0) -- (5.5,0);
\node[dblue!90,quadri] at (3.25,3) {{$\pi-\frac{\theta}{2}$}};
\node[dgreen!90, quadri] at (-3.35,3) {{$\pi-\frac{\theta}{2}$}};
\end{scope}
\end{tikzpicture}
\caption{Three-quarter plane case}
\label{fig:theta-3/4}
 \end{subfigure}

\begin{subfigure}[b]{1\textwidth}
\centering
 \begin{tikzpicture}
\begin{scope}[scale=0.4]
\draw[white, fill=nred!15] (1,5.5) -- (5.5,5.5) -- (5.5,1) -- (1,1);
\tikzstyle{quadri}=[rectangle,draw,fill=white]
\draw[white, thick] (0,0) grid (5.5,5.5);
\draw[->] (0,0) -- (0,5.5);
\draw[->] (0,0) -- (5.5,0);
\node[nred!90, quadri] at (3,3) {{$\theta$}};
\end{scope}

\begin{scope}[scale=0.4, xshift=15cm]
\tikzstyle{quadri}=[rectangle,draw,fill=white]
\draw[white, fill=dblue!15] (1,1) -- (1,5.5) -- (5.5,5.5) -- (5.5,1);
\draw[white, fill=dgreen!15] (-1,1) -- (-5.5,1) -- (-5.5,5.5) -- (-1,5.5);
\draw[white, thick] (-5.5,0) grid (5.5,5.5);
\draw[->] (0,-0.5) -- (0,5.5);
\draw[->] (-5.5,0) -- (5.5,0);
\node[dblue!90,quadri] at (3.25,3) {{$\frac{\theta}{2}$}};
\node[dgreen!90, quadri] at (-3.35,3) {{$\frac{\theta}{2}$}};
\end{scope}
\end{tikzpicture}
\caption{Quarter plane case}
\label{fig:theta-1/4}
\end{subfigure}
\caption{A random walk with associated angle $\theta$ is the three quadrants (resp.~the quadrant), can be seen as an inhomogeneous random walk in the half-plane with associated angle $\pi-\frac{\theta}{2}$ (resp.~ $\frac{\theta}{2}$) in each quadrant.}
\end{figure}

\paragraph{Structure of the paper.}
In Section~\ref{sec:Kernel functional equations}, we start with underlining an important tool used in this analytic study of harmonic functions of random walks with small steps. This object, denominated by $K(x,y)$, is called the kernel of the walks and is a polynomial of degree 2 in $x$ and $y$  which encodes the transition probabilities of the walk. Further, we set up various functional equations for generating series of harmonic functions involving $K(x,y)$.

In Section~\ref{sec:Expression for the generating functions}, we express explicitly generating functions of harmonic functions for some models of random walks. To come to these results, we transform a functional equation (built in the previous section) into a boundary value problem in Subsection \ref{subs:BVP}, and the solution process of this problem can be found in Subsection \ref{subs:solution of the boundary value problem}. The resolution of the boundary value problem involves conformal gluing functions, introduced in Subsection \ref{sub:ConformalGluingFunction}. We end this part with the application of the results to the example of the simple random walks (Subsection \ref{subs:example SRW}).

In Appendix~\ref{app:Non symm}, we suppose that we do not have the symmetry of the transition probabilities  of the walks. We are still able to build kernel functional equations, but unfortunately there are too many unknowns and we are not able to solve this difficult problem yet. 

In Appendix~\ref{app:quadrant}, we apply this decomposition of the domain into two convex cones to symmetric harmonic functions of symmetric random walks in the quadrant and find partially the same results as \cite{Ra-14}. We invite the reader to refer to this appendix along the consultation of the main body on the three quadrants.

\paragraph{Acknowledgments.}
I would like to thank Kilian Raschel for his strong support, advice and suggestions. I also would like to thank Samuel Simon for his help with the English. Finally, I thank the anonymous referee for his/her comments and suggestions. 
\\
Most of the work presented in this article was done during my affiliation at Department of Mathematics, Simon Fraser University, Canada \& Institut Denis Poisson, Universit\'e de Tours et Universit\'e d'Orl\'eans, France.

\section{Kernel functional equations}
\label{sec:Kernel functional equations}

\subsection{Kernel of the random walks}

The kernel of the random walks, denoted by $K(x,y)$, is an important object of the study.  It appears in the various functional equations of this section, and we use it later to transform a functional equation into a boundary value problem. The kernel is characterized by the transition probabilities of the associated random walk by
\begin{equation}
\label{eq:Kernel}
	K(x,y)=xy \left[ \sum_{-1\leq i,j \leq 1}p_{i,j}x^{-i} y^{-j} -1 \right].
\end{equation}
The kernel is a polynomial of degree two in $x$ and $y$ (consequence of \ref{H1} and \ref{H3}) and can be written as\footnoteN{We follow the notations of \cite{Ra-14} which are slightly different from the usual ones in \cite{FaIaMa-17}. Indeed, \cite{Ra-14} points out that the kernel in \eqref{eq:Kernel}  is the reciprocal one of the kernel $Q(x,y)$ in \cite{FaIaMa-17}: $K(x,y)=x^{2}y^{2}Q(1/x, 1/y)$.}
\begin{equation}
K(x,y)=\alpha (x)y^{2}+\beta (x)y+\gamma (x) = \widetilde{\alpha} (y) x^{2} + \widetilde{\beta} (y) x+\widetilde{\gamma} (y),
\end{equation}
where
\begin{equation}
\label{coeffnoyau}
\left\{
\begin{array}{l l l}
\alpha (x) = p_{-1,-1}x^{2}+p_{0,-1}x; & \beta (x) = p_{-1,0} x^{2}-x+p_{1,0};& \gamma (x) = p_{0,1}x+p_{1,1};\\
\widetilde{\alpha} (y) =p_{-1,-1}y^{2}+ p_{-1,0}y; & \widetilde{\beta} (y) = p_{0,-1}y^{2}-y+p_{0,1}; & \widetilde{\gamma} (y) = p_{1,0}y+p_{1,1}.
\end{array}
\right.
\end{equation}
We also define the discriminants in the $x$-plane and in the $y$-plane:
\begin{equation}
\label{discriminant}
\widetilde{\delta}(y)=\widetilde{\beta}(y)^2-4\widetilde{\alpha}(y)\widetilde{\gamma}(y), \qquad
\delta (x)=\beta (x)^2-4\alpha (x)\gamma (x).
\end{equation}
The discriminant $\delta (x)$ (resp. $\widetilde{\delta}(y)$) is a polynomial of degree three or four (this is a  consequence of \ref{H2}). Hence there are four branch points $x_1, x_2, x_3, x_4$ (resp. $y_1, y_2, y_3, y_4)$, with $x_4=\infty$ (resp. $y_4=\infty$) when $\delta (x)$ (resp. $\widetilde{\delta}(y)$) is of degree 3. Examples of kernel coefficients are given in Section~\ref{subs:example SRW}.

\begin{lem}[Sec.~2.3 in \cite{FaIaMa-17} and Sec.~2.5 in \cite{Ra-14}] The branch points $x_{i}$ are real and 
\begin{equation}
\left|x_1\right| \leq x_2=1=x_3 \leq \left| x_4 \right| \leq \infty.
\end{equation}
More precisely, we have $x_{1}\in [-1,1)$ and $x_{4}\in (1, \infty) \cup \{\infty\} \cup (-\infty, -1]$. Furthermore, on the real line, $\delta (x)$ is negative if and only if $x\in(x_{1},x_{4})\setminus \{1\}$. Symmetric results hold for the $y_{i}$, roots of $\widetilde{\delta}(y)$.
\end{lem}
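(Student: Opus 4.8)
The plan is to analyze the discriminant $\delta(x) = \beta(x)^2 - 4\alpha(x)\gamma(x)$ as a real polynomial and locate its roots, exploiting heavily the probabilistic normalization $\sum p_{i,j} = 1$ and the zero-drift hypothesis \ref{H4}. First I would note that $K(1,1) = 1 \cdot 1 \cdot [\sum p_{i,j} - 1] = 0$, so $y = 1$ is a root of $K(1,y)$, i.e. $\alpha(1) + \beta(1) + \gamma(1) = 0$; more to the point, I want to show $x = 1$ is a root of $\delta(x)$. To see this, recall that for fixed $x$ the equation $K(x,y) = 0$ in $y$ has discriminant $\delta(x)$, and the two $y$-roots coincide exactly when $\delta(x) = 0$. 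At $x = 1$, the kernel $K(1,y) = \alpha(1)y^2 + \beta(1)y + \gamma(1)$; I would compute that zero drift forces a double structure. Concretely, differentiating $\sum_{i,j} p_{i,j} x^{-i} y^{-j}$ and using $\sum i p_{i,j} = \sum j p_{i,j} = 0$ shows that $(1,1)$ is a critical point of the Laplace-transform-type function, which translates into $K$, $K_x$, $K_y$ all vanishing appropriately at $(1,1)$; this gives that $y = 1$ is a \emph{double} root of $K(1,y)$, hence $\delta(1) = 0$. The same argument in the other variable gives $\widetilde\delta(1) = 0$, and by the structural form $K(1,y) = \alpha(1)(y-1)^2$ one reads off that the double root is at $y = 1$, which is the claim $x_2 = x_3 = 1$ (here I am using that a branch point $x_i$ is where the two sheets of the algebraic curve meet, and the value $1$ is attained).

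Next I would establish the ordering $|x_1| \le 1 \le |x_4|$ and the sign of $\delta$ on the real line. The idea is standard from the theory of random walks in the quarter plane (Fayard--Iasnogorodski--Malyshev): for $x$ real and positive, $K(x, \cdot)$ is, up to the factor $xy$, $x$ times (a Laurent polynomial in $y$ that is the generating function of a probability distribution minus $1$); positivity of the $p_{i,j}$ forces the two real roots $Y_\pm(x)$ of $K(x,y) = 0$ to be real and positive in a neighbourhood of $x = 1$, and they are complex conjugate (so $\delta(x) < 0$) precisely on an interval $(x_1, x_4)$ around $1$. Since at $x = 1$ we have the double root, $\delta$ changes sign (or touches zero) there, so $1 \in \{x_1, x_4\}$ would be wrong — rather $1$ is interior to $(x_1,x_4)$; I would argue $\delta(1) = 0$ but $\delta$ is negative on both sides of $1$ by a local expansion: write $\delta(1 + \epsilon)$ and check the leading term is $\le 0$ using \ref{H4} again (the vanishing of the drift is exactly what makes $x=1$ a \emph{double} zero of $\delta$ rather than a simple one, so $\delta$ does not change sign at $1$). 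Then the extreme real roots of the degree-$\le 4$ polynomial $\delta$ are $x_1 \le 1$ and $x_4 \ge 1$, and I would pin down $x_1 \in [-1, 1)$ and $x_4 \in (1,\infty) \cup \{\infty\} \cup (-\infty, -1]$ by evaluating $\delta$ at $x = -1$ and $x = 0$ and using the signs of the coefficients (each $p_{i,j} \ge 0$, not all zero by \ref{H3}), plus a Vieta / intermediate-value argument; the case distinctions correspond to whether $p_{-1,-1} = 0$ (degree drop, $x_4 = \infty$) or not.

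The main obstacle I anticipate is the careful bookkeeping of signs and the degenerate sub-cases: ruling out, via \ref{H3}, that $\delta$ degenerates further (e.g. becomes identically zero, or has fewer than the expected number of real branch points), and handling the sign of $\delta$ at the test points $-1$ and $0$ under all the admissible patterns of vanishing $p_{i,j}$. This is where \ref{H3} (no two consecutive zeros among the six boundary probabilities) does the real work, excluding the genuinely degenerate walks. The equidistribution of the argument between $x$ and $y$ is immediate from the symmetry of the setup, so the final sentence ("symmetric results hold for the $y_i$") requires no extra work beyond swapping $\alpha \leftrightarrow \widetilde\alpha$ etc. Since the statement is attributed to \cite{FaIaMa-17} and \cite{Ra-14}, I would in practice cite those references for the bulk of the sign analysis and only re-derive the zero-drift-specific facts ($\delta(1) = 0$, double root at $1$, $x_2 = x_3 = 1$) in detail.
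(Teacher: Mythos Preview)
The paper does not supply its own proof of this lemma: it is quoted directly from \cite{FaIaMa-17} and \cite{Ra-14}, and your closing remark correctly anticipates this. Your sketch is sound and matches the standard argument in those references---the only point where you are a bit loose is the passage from ``$(1,1)$ is a critical point of $K$'' to ``$x=1$ is a \emph{double} zero of $\delta$'' (you show $\delta(1)=0$ via the double $y$-root, but $\delta'(1)=0$ requires a separate computation using \emph{both} drift conditions, e.g.\ by expanding $\delta(x)=\alpha(x)^2\bigl(Y_+(x)-Y_-(x)\bigr)^2$ near $x=1$); filling that in, the argument is complete.
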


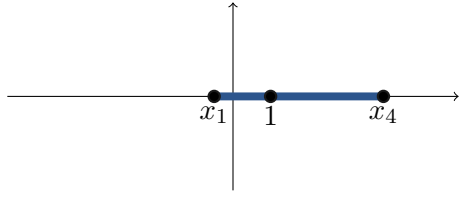
\begin{figure}[t]
\centering
\begin{tikzpicture}
\begin{scope}[scale=0.5]
\draw[->] (-6, 0) -- (6,0);
\draw[->] (0,-2.5) -- (0,2.5); 
\draw[line width=3pt, dblue!90] (-0.5,0) -- (4,0);
\draw[thick, black!90, fill=black] (-0.5,0) circle (0.15 cm);
\node[below] at (-0.5,0) {$x_1$};
\draw[thick, black!90, fill=black] (1,0) circle (0.15 cm);
\node[below] at (1,0) {$1$};
\draw[thick, black!90, fill=black] (4,0) circle (0.15 cm);
\node[below] at (4,0) {$x_4$};
\end{scope}
\end{tikzpicture}
\caption{Cut plane $\mathbb C \setminus \left([x_1,x_4]\right)$}
\label{fig:cut plane}
\end{figure}

Let $Y(x)$  (resp.~$X(y)$) be the multivalued solution to $K(x,Y(x))=0$ (resp.~$K(X(y), y)=0$). These algebraic functions can be written as
\begin{equation}
\label{eq:algebraic curve}
Y(x)=\frac{-\beta(x)\pm \sqrt{\delta(x)}}{2\alpha(x)}
\quad \text{and} \quad
X(y)=\frac{-\widetilde{\beta}(y)\pm \sqrt{\widetilde{\delta}(y)}}{2\widetilde{\alpha}(y)}.
\end{equation}
Over the $x$-plane, the function $Y$ has two branches $Y_{0}$ and $Y_{1}$, both meromorphic on $\mathbb C\setminus [x_{1},x_{4}]$ (see Figure~\ref{fig:cut plane}). We fix the notation by choosing $Y_{0}=Y_{-}$ and $Y_{1}=Y_{+}$. On the whole of $\mathbb C$ (see \cite[Thm~5.3.3]{FaIaMa-17}), 
\begin{equation}
|Y_{0}|\leq |Y_{1}|.
\end{equation}
On the segment $[x_{1},1]$, $Y_{0}(x)$ and $Y_{1}(x)$ are complex conjugate and at the branch points $x_{i}$, we have $Y_{0}(x_{i})=Y_{1}(x_{i})$ (when finite), and this common value is denoted by $Y(x_{i})$.  We introduce the curve $\mathcal L$ by
\begin{equation}
\label{eq:curve_L}
     \mathcal L =Y_0([x_1,1])\cup Y_1([x_1, 1])=\{y\in \mathbb C:
     K(x,y)=0 \text{ and } x\in[x_1,1]\}. 
\end{equation}
We denote by $\mathcal G_{\mathcal L}$ the domain bounded by $\mathcal{L}$ which contains $y_{1}$. Figure~\ref{fig:curves} presents examples of the curve $\mathcal{L}$ for the simple ($p_{1,0}=p_{0,1}=p_{-1,0}=p_{0,-1}=1/4$) and Gouyou-Beauchamps ($p_{1,0}=p_{-1,1}=p_{0,-1}=p_{1,-1}=1/4$) case. 

\begin{lem}[Lem.~6.5.1 in \cite{FaIaMa-17}]
\label{lem:properties_curves}
The curve $\mathcal{L}$ in \eqref{eq:curve_L} is symmetric with respect to the horizontal axis, smooth except at $ Y(1)=1$ where it may have a corner point. At this point, the angle between the curve and the segment $[y_{1},1]$ is given by  by \eqref{eq:theta}.
\end{lem}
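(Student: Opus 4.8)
\textbf{Proof plan for Lemma~\ref{lem:properties_curves}.}

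The plan is to analyze the curve $\mathcal L = Y_0([x_1,1]) \cup Y_1([x_1,1])$ by examining the behavior of the two branches $Y_0, Y_1$ on the real segment $[x_1,1]$ and at its endpoints. First I would record that on $[x_1,1]$ the discriminant $\delta(x)$ is $\leq 0$ (by the Lemma on branch points, $\delta < 0$ on $(x_1,x_4)\setminus\{1\}$ and $\delta$ vanishes at $x_1$ and at $1$), so by \eqref{eq:algebraic curve} the two values $Y_0(x), Y_1(x)$ are complex conjugates of one another: $Y_1(x) = \overline{Y_0(x)}$ for $x \in [x_1,1]$. This immediately gives the symmetry of $\mathcal L$ with respect to the real (horizontal) axis, since the set is stable under complex conjugation. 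At the endpoints $x_1$ and $1$ the discriminant vanishes, so $Y_0$ and $Y_1$ coalesce: $Y_0(x_1) = Y_1(x_1) = Y(x_1)$ and $Y_0(1) = Y_1(1) = Y(1)$, and one checks from the kernel coefficients that $Y(1) = 1$ (this uses \ref{H4}: at $x=y=1$ the kernel $K(1,1) = 1\cdot[\sum p_{i,j} - 1] = 0$, and the zero-drift condition forces $1$ to be a double root of $y \mapsto K(1,y)$, so $Y_0(1)=Y_1(1)=1$). Hence $\mathcal L$ is a closed curve through the two real points $Y(x_1)$ and $1$.

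Next I would establish smoothness away from $x=1$. On the open arc corresponding to $x \in (x_1,1)$ one has $\delta(x) < 0$ and $\alpha(x) \neq 0$, so $Y_0$ and $Y_1$ are analytic there with nonvanishing derivative (the square root $\sqrt{\delta(x)}$ is a nonzero purely imaginary analytic function of $x$ on this interval), giving a smooth parametrization of each of the two sub-arcs; at the point $Y(x_1)$, where the two arcs meet, the standard local analysis of an algebraic function near a simple branch point (a square-root singularity, $Y(x) - Y(x_1) \sim c\sqrt{x - x_1}$) shows the two branches leave $Y(x_1)$ in exactly opposite directions along the real axis, so $\mathcal L$ is smooth there as well. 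The only place this argument can fail is $x=1$: this is a branch point lying in the \emph{interior} of the cut $[x_1,x_4]$ rather than at an endpoint, so $Y_0$ and $Y_1$ are each analytic through $x=1$ (they do not ramify there — the ramification is between $[x_1,1]$ and $[1,x_4]$), and consequently $Y_0([x_1,1])$ and $Y_1([x_1,1])$ each terminate at $Y(1)=1$ with a well-defined tangent direction, but these two tangent directions need not be opposite, producing a corner.

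The main work — and the expected obstacle — is the computation of the corner angle at $Y(1)=1$. The approach is to linearize the equation $K(x,y)=0$ near $(x,y)=(1,1)$: writing $x = 1+\xi$, $y = 1+\eta$ and expanding, the lowest-order terms give a quadratic form $A\xi^2 + 2B\xi\eta + C\eta^2 = 0$ in $(\xi,\eta)$ whose coefficients are exactly the second moments $\sum i^2 p_{i,j}$, $\sum ij\, p_{i,j}$, $\sum j^2 p_{i,j}$ appearing in \eqref{eq:theta} (here \ref{H4} is what kills the linear terms, so that the cone of directions is governed by the Hessian). Solving this quadratic for $\eta/\xi$ as $\xi$ ranges over real values near $0$ and tracking how $\eta$ rotates as $\xi$ passes through the branch point at $\xi=0$, one finds the two limiting tangent directions of $Y_0$ and $Y_1$ at $y=1$; the angle between them, measured against the segment $[y_1,1]$, works out to be precisely $\theta$ as defined in \eqref{eq:theta}. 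I would cite \cite[Lem.~6.5.1]{FaIaMa-17} (and the surrounding discussion, e.g.\ \cite[Sec.~1.2]{FaRa-11}) for the detailed verification of this angle identity, since it is a known computation in the Fayolle--Iasnogorodski--Malyshev framework; the content here is simply to note which hypotheses (\ref{H2} for the degree of $\delta$, \ref{H4} for the vanishing of the drift and hence $Y(1)=1$) are being used.
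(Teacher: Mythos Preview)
The paper does not prove this lemma at all: it is stated with an explicit attribution to \cite[Lem.~6.5.1]{FaIaMa-17} and no argument is given in the body of the article. So there is nothing to compare your proposal against in the paper itself.

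That said, your outline is essentially the standard argument one finds in the Fayolle--Iasnogorodski--Malyshev reference and is correct in its broad strokes: conjugacy of $Y_0,Y_1$ on $[x_1,1]$ gives the reflection symmetry; the double zero $x_2=x_3=1$ of $\delta$ (rather than a simple one) is exactly why $\sqrt{\delta(x)}\sim c(x-1)$ is analytic at $x=1$, so the two branches arrive at $Y(1)=1$ with well-defined, generically non-opposite tangents; and the Hessian expansion of $K(x,y)=0$ at $(1,1)$, with the linear terms killed by \ref{H4}, produces the quadratic form in the second moments whose root directions give the angle $\theta$ of \eqref{eq:theta}. One small wording issue: your phrase ``the ramification is between $[x_1,1]$ and $[1,x_4]$'' is misleading --- the point is simply that $x=1$ is a \emph{double} zero of $\delta$, hence not a ramification point of $Y$ at all, while $x_1$ and $x_4$ are simple zeros and genuine branch points. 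But this does not affect the validity of the argument, and since the paper defers entirely to the cited reference, your sketch already goes well beyond what is required here.
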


\begin{rem}
The quotient $\frac{ \sum_{-1\leq i,j\leq 1} ijp_{i,j}}{\sqrt{\left(\sum_{-1\leq i,j\leq 1}i^{2}p_{i,j}\right)\cdot\left(\sum_{-1\leq i,j\leq 1}j^{2}p_{i,j}\right)}}$ is the coefficient of correlation of the increments of the walk. 
\end{rem}

Similarly, we define the curve $\mathcal M=X_{0}([y_{1},1])\cup X_{1}([y_{1},1])$, and the same symmetric results and notations hold. One has the following automorphism relations (see \cite[Sec.\ 5.3]{FaIaMa-17}):
\begin{equation*}
X_0:\mathcal{G}_{\mathcal L}\setminus [y_1,1]\rightarrow \mathcal{G}_{\mathcal M}\setminus [x_1,1] 
\qquad
\text{and}
\qquad
Y_0:\mathcal{G}_{\mathcal M}\setminus [x_1,1]\rightarrow \mathcal{G}_{\mathcal L}\setminus [y_1,1] 
\end{equation*}
are conformal and inverse of one another. In particular, we have then $X_{0}(Y(x_{1}))=x_{1}$ and $Y_{0}(X(y_{1}))=y_{1}$. 

\begin{figure}[htb]
\centering
\begin{tabular}{cccc}
\begin{tikzpicture}
\begin{scope}[scale=5]
\draw [nred!90, thick] (0.9,0) arc (180:135:0.1);
\draw [dblue!90, thick] plot[smooth] file{X0GB.dat};
\draw [dblue!90, thick] plot[smooth] file{X1GB.dat};
\draw [->] (0,-0.5) -- (0,0.5);
\draw [->] (-0.25,0) -- (1.1,0);
\draw[thick, dblue!90, fill=dblue!90] (1,0) circle (0.005 cm);
\draw [dblue!90] (1,0) node[below]{\scriptsize{$1$}};
\draw[thick, dblue!90, fill=dblue!90] (0.17,0) circle (0.005 cm);
\draw [dblue!90] (0.17,0) node[below]{\scriptsize{$y_1$}};
\draw[thick, dblue!90, fill=dblue!90] (-0.17,0) circle (0.005 cm);
\draw [dblue!90] (-0.25,0) node[below]{\scriptsize{$Y(x_1)$}};
\draw [nred!90] (0.92,0.05) node[left]{\scriptsize{$\theta$}};
\end{scope}
\end{tikzpicture}
\qquad
&
\qquad
\begin{tikzpicture}
\begin{scope}[scale=5, xshift= 5 cm]
\draw [nred!90, thick] (0.25,0) arc (180:90:0.05);
\draw [nred!90] (0.28,0.07) node[left]{\scriptsize{$\theta$}};
\draw[thick, dblue!90] (0,0) circle (0.3 cm);
\draw[thick, dblue!90, fill=dblue!90] (0.3,0) circle (0.005 cm);
\draw [dblue!90] (0.3,0) node[below right]{\scriptsize{$1$}};
\draw[thick, dblue!90, fill=dblue!90] (0.05,0) circle (0.005 cm);
\draw [dblue!90] (0.05,0) node[below]{\scriptsize{$y_1$}};
\draw[thick, dblue!90, fill=dblue!90] (-0.3,0) circle (0.005 cm);
\draw [dblue!90] (-0.38,0) node[below]{\scriptsize{$Y(x_1)$}};
\draw [->] (0,-0.5) -- (0,0.5);
\draw [->] (-0.5,0) -- (0.5,0);
\end{scope}
\end{tikzpicture}
  \end{tabular}
   \caption{The curve $\mathcal{L}$ for Gouyou-Beauchamps model (on the left) and for the simple model (on the right). In the case of Gouyou-Beauchamps model $\theta=\pi/4$, and in the case of the simple model, the curve is simply the unit circle and $\theta=\pi/2$. }
\label{fig:curves}
\end{figure}
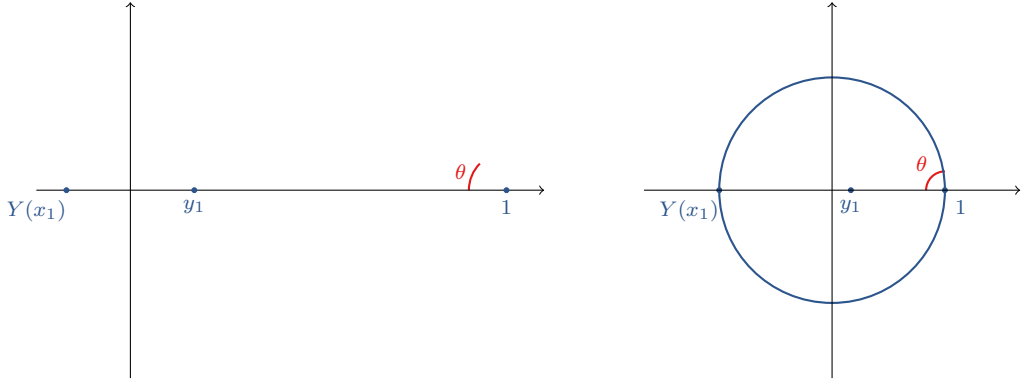

\subsection{A first functional equation}

From the properties \ref{P1} and \ref{P2}, we deduce a functional equation satisfied by the generating function $H(x,y)$ defined in \eqref{eq:generating_function_3/4} for random walks under the hypothesis \ref{H1} 
\begin{equation}
\label{eq:functional_equation_3/4}
K(x,y)H(x,y)=K(x,0)H_{-0}(x^{-1})+K(0,y)H_{0-}(y^{-1})-K(0,0)H_{0,0},
\end{equation}
where 
\begin{multline}
     H_{-0}(x^{-1})= \sum_{i\leq 0} f(i,1)x^{i-1},\quad
     H_{0-}(y^{-1}) = \sum_{j\leq 0} f(1,j)y^{j-1}\quad\text{and}\quad
     H_{0,0}=f(1,1).
\end{multline}

This equation looks similar to the functional equation for discrete harmonic functions of random walks in a quadrant  
\begin{equation}
\label{eq:quarter}
     \mathcal Q=\{(i,j)\in\mathbb Z^2: i\geq1 \text{ and } j\geq1\}
\end{equation}
written in \cite[Sec.\ 2]{Ra-14}. In this case, we assume that random walks satisfy the hypotheses \ref{H1}, \ref{H2}, \ref{H3}, \ref{H4}. We also ask the associated discrete harmonic functions to have the properties \ref{P1q}, \ref{P2q}, \ref{P3q} where 
 
 \begin{enumerate}[label=(\~P\arabic*)]
\item \label{P1q} 
For all $i\geq 1$ and $j \geq 1$, $\widetilde{f}(i,j)=\sum_{-1\leq i_{0},j_{0} \leq 1}p_{i_{0},j_{0}}\widetilde{f}(i+i_{0},j+j_{0})$;
\item \label{P2q} 
If $i\leq 0$ or $j\leq 0$, then $\widetilde{f}(i,j)=0$;
\item \label{P3q} 
If $i>0$ and $j>0$ then $\widetilde{f}(i,j)>0$. 
\end{enumerate} 

The generating function of such harmonic functions $\widetilde{f}$
\begin{equation}
\label{eq:generating_function_1/4}
\widetilde{H}(x,y)=\sum_{i, j\geq 1}\widetilde{f}(i, j)x^{i-1} y^{j-1}
\end{equation}
satisfies the equation
\begin{equation}
\label{eq:functional_equation_1/4}
     K(x,y)\widetilde{H}(x,y)=K(x,0)\widetilde{H}_{-0}(x)+K(0,y)\widetilde{H}_{0-}(y)-K(0,0)\widetilde{H}_{00},
\end{equation}
where
\begin{multline}
\label{eq:sections_1/4}
     \widetilde{H}_{-0}(x)= \sum_{i\geq 1}\widetilde{f}(i,1)x^{i-1},\quad
     \widetilde{H}_{0-}(y)=\sum_{j\geq 1} \widetilde{f}(1,j)y^{j-1}\quad\text{and}\quad
     \widetilde{H}_{0,0}=\widetilde{f}(1,1).
\end{multline}

As noted above, the structure of equations \eqref{eq:functional_equation_3/4} and \eqref{eq:functional_equation_1/4} are the same except that in the case of \eqref{eq:functional_equation_3/4} there are infinitely many terms with positive and negative powers of $x$ and $y$. As noticed in \cite{RaTr-18}, this difference is not anecdotal, the series are not convergent anymore (for instance, it is not possible to evaluate them) and almost all the methodology of \cite{BMMi-10,Ra-12} can no longer be performed.

This difficulty appears as well in the study of counting walks avoiding a quadrant and in order to avoid this situation, in \cite{BM-16} Bousquet-M\'elou views the three-quarter plane as the union of three quadrants and with a combinatorial approach, gets results on the simple and the diagonal walks. In \cite{RaTr-18}, the three quadrants are split in two symmetric convex cones of opening angle $\frac{3\pi}{4}$. In this article, we follow the same strategy as in \cite{RaTr-18}.

\subsection{Functional equations in the  $\frac{3\pi}{4}$-cones}
\label{sub:functional-eq-3pi/4}

The cone $\mathcal C$ is cut into three parts: the lower part $\{i\geq 1, j\leq i-1\}$, the diagonal $\{ i=j\}$ and the upper part $\{j\geq 1, i\geq j-1\}$. Let $L(x,y)$ (resp.\ $D(x,y)$ and $U(x,y)$) be the generating function of harmonic functions evaluated in the lower part (resp.\ diagonal and upper part), see Figure~\ref{fig:some_sections}. By construction, we have
\begin{equation}
\label{eq:equation_cut3parts}
     H(x,y)=L(x,y)+D(x,y)+U(x,y),
\end{equation}
where 
\begin{align*}
L(x,y)=\sum\limits_{\substack{i \geq 1 \\  j\leq i-1}}f(i,j)x^{i-1} y^{j-1}, \quad
D(x,y)=\sum\limits_{\substack{i\geq 1}}f(i,i)x^{i-1} y^{i-1} 
\\
\quad\text{and}\quad
U(x,y)=\sum\limits_{\substack{j \geq 1 \\ i\leq j-1 }}f(i,j)x^{i-1} y^{j-1}.
\end{align*}

\begin{figure}[t]
\centering
\begin{tikzpicture}
\begin{scope}[scale=0.45]
\tikzstyle{quadri}=[rectangle,draw,fill=white]
\draw[white, fill=dblue!15] (1,0) -- (5.5,4.5) -- (5.5,-5.5) -- (1,-5.5);
\draw[white, fill=dgreen!15] (0,1) -- (-5.5,1) -- (-5.5,5.5) -- (4.5,5.5);
\draw[white, thick] (0,-5.5) grid (5.5,5.5);
\draw[white, thick] (-5.5,0) grid (0,5.5);
\draw[Apricot!90, thick] (1,1) -- (5.5,5.5);
\draw[dashed, dgreen!90, thick] (0,1) -- (4.5,5.5);
\draw[dashed, dblue!90, thick] (1,0) -- (5.5,4.5);
\draw[->] (0,-5.5) -- (0,5.5);
\draw[->] (-5.5,0) -- (5.5,0);
\node[dblue!90,quadri] at (3,-3) {$L(x,y)$};
\node[dgreen!90, quadri] at (-3,3) {$U(x,y)$};
\node[Apricot!100, quadri] at (7.4,6.5) {$D(x,y)$};
\node[dgreen!100, quadri, dashed] at (2.8,6.5) {$D^u(x,y)$};
\node[dblue!100, quadri, dashed] at (7.6,4) {$D^\ell(x,y)$};
\end{scope}
\end{tikzpicture}
\caption{Decomposition of the three-quarter plane and associated generating functions}
\label{fig:some_sections}
\end{figure}
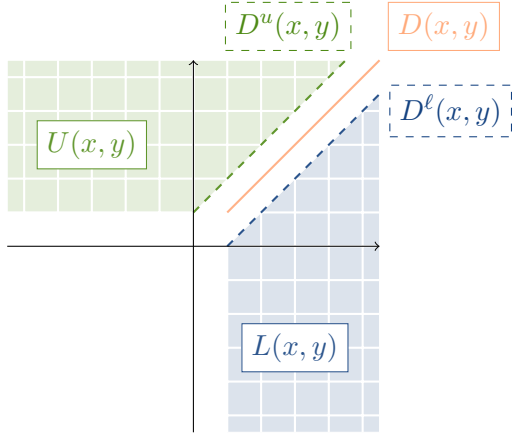

\begin{lem}
\label{lem:functional_equation_sym_3/4}
For any random walks with properties \ref{H1} and \ref{H2}, the generating function $L(x,y)$ satisfies the following functional equation
\begin{multline}
\label{eq:functional_equation_sym}
K(x,y)L(x,y)=-\left(p_{0,1}x+p_{-1,0}x^{2}y+\frac{1}{2}\left(p_{1,1}+p_{-1,-1}x^{2}y^{2}-xy\right)\right)D(x,y)
\\
+\left(p_{1,0}y+p_{1,1}\right)L_{0-}(y^{-1})
+\frac{1}{2}p_{1,1}f(1,1),
\end{multline}
with $L_{0-}(y^{-1})= \sum_{j<0} f(1,j)y^{j-1}$.
\end{lem}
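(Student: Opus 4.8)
My plan is to obtain \eqref{eq:functional_equation_sym} by summing the harmonicity relation over the lower region. Every lattice point of $R:=\{(i,j):i\ge 1,\ j\le i-1\}$ has $i\ge1$, so \ref{P1} holds at every point of $R$. I would multiply \ref{P1} by $x^{i-1}y^{j-1}$, sum over $(i,j)\in R$, interchange the finite sum over the steps $(i_0,j_0)$ with the sum over $R$, and re-index each inner sum by the translation $(i,j)\mapsto(i+i_0,j+j_0)$. This rewrites
\[
L(x,y)=\sum_{-1\le i_0,j_0\le 1}p_{i_0,j_0}\,x^{-i_0}y^{-j_0}\,A_{i_0,j_0}(x,y),
\]
where $A_{i_0,j_0}(x,y)=\sum f(k,\ell)x^{k-1}y^{\ell-1}$ is summed over the translate $R+(i_0,j_0)$; by \ref{H2} only the six steps with $p_{i_0,j_0}\neq0$ survive.

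\textbf{Reduction to a raw equation.} Each translate $R+(i_0,j_0)$ differs from $R$ only along the two edges $i=1$ and $j=i-1$, so $A_{i_0,j_0}$ is $L$ corrected by a handful of one-dimensional sections: the column $\{i=1,\,j\le0\}$ (when $i_0=1$; the translates with $i_0=-1$ only add the column $i=0$, which vanishes by \ref{P2}), the main diagonal $D(x,y)$ (when $j_0-i_0=1$, i.e.\ for the steps $(0,1)$ and $(-1,0)$), and the sub-diagonal section $\Delta(x,y):=\sum_{i\ge2}f(i,i-1)x^{i-1}y^{i-2}$ (when $j_0-i_0=-1$, i.e.\ for $(1,0)$ and $(0,-1)$). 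The coefficient of $L(x,y)$ thus produced is $\sum_{i_0,j_0}p_{i_0,j_0}x^{-i_0}y^{-j_0}=1+K(x,y)/(xy)$ by \eqref{eq:Kernel}; moving the term $\big(1+K/(xy)\big)L$ to the left-hand side and multiplying through by $xy$ yields a ``raw'' identity
\[
K(x,y)L(x,y)=-\big(p_{0,1}x+p_{-1,0}x^{2}y\big)D(x,y)+\big(p_{1,1}+p_{1,0}y\big)L_{0-}(y^{-1})+\big(p_{1,0}y+p_{0,-1}xy^{2}\big)\Delta(x,y)+R_0(x,y),
\]
where $R_0$ is an explicit Laurent polynomial in $x,y$ times $f(1,0)$ (the negative-axis terms having already been dropped via \ref{P2}).

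\textbf{Eliminating the sub-diagonal section.} It remains to rewrite the $\Delta$–term. For this I would apply \ref{P1} at the diagonal points $(k,k)$, $k\ge1$, multiply by $x^{k-1}y^{k-1}$, sum over $k\ge1$, and use the symmetry \ref{P4} — together with $p_{1,0}=p_{0,1}$ and $p_{-1,0}=p_{0,-1}$ from \ref{H2} — to fold $f(k+1,k)$ onto $f(k,k+1)$ and $f(k,k-1)$ onto $f(k-1,k)$. The diagonal harmonicity relation then collapses to a single identity of the form
\[
\big(1-p_{1,1}(xy)^{-1}-p_{-1,-1}xy\big)D(x,y)=2\big(p_{1,0}x^{-1}+p_{0,-1}y\big)\Delta(x,y)+2p_{0,-1}f(1,0)-p_{1,1}(xy)^{-1}f(1,1).
\]
Solving this for $\big(p_{1,0}y+p_{0,-1}xy^{2}\big)\Delta(x,y)=xy\big(p_{1,0}x^{-1}+p_{0,-1}y\big)\Delta(x,y)$ and substituting into the raw identity, the $\Delta$–term becomes $-\tfrac12\big(p_{1,1}+p_{-1,-1}x^{2}y^{2}-xy\big)D(x,y)+\tfrac12 p_{1,1}f(1,1)$ plus an $f(1,0)$–term that exactly balances $R_0$, leaving the column $i=1$ data packaged as $\big(p_{1,1}+p_{1,0}y\big)L_{0-}(y^{-1})$; this is \eqref{eq:functional_equation_sym}. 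As a sanity check, adding \eqref{eq:functional_equation_sym} to its image under $x\leftrightarrow y$ — which by \ref{P4} governs $U(x,y)=L(y,x)$ — and then $K(x,y)D(x,y)$, one recovers \eqref{eq:functional_equation_3/4} via \eqref{eq:equation_cut3parts}.

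\textbf{Main obstacle.} The whole difficulty is the bookkeeping in the two middle steps: tracking, for each of the six nonzero steps, exactly which edge and corner terms of $R+(i_0,j_0)$ enter and with which sign, disposing of the pieces on the negative axes through \ref{P2}, and — above all — realising that the ``boundary-of-the-boundary'' contribution $\Delta(x,y)$ is not directly expressible through $D$ and the axis sections, but only after feeding in the diagonal harmonicity relation. It is precisely that last substitution, combined with the symmetry \ref{P4}, that produces the factor $\tfrac12$ in front of $p_{1,1}+p_{-1,-1}x^{2}y^{2}-xy$ and the constant $\tfrac12 p_{1,1}f(1,1)$, so making those coefficients come out right is the crux.
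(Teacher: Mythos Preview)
Your proposal is correct and follows essentially the same route as the paper: the paper likewise writes a raw functional equation for $L$ involving the sub-diagonal section $D^{\ell}$ (your $\Delta$, up to the boundary term $f(1,0)y^{-1}$), then derives a diagonal harmonicity relation \eqref{eq:fcteq_D-nonsym} which --- using $p_{i,j}=p_{j,i}$ together with, implicitly, \ref{P4} --- folds $D^{u}$ onto $D^{\ell}$, and finally substitutes to eliminate $D^{\ell}$, producing the $\tfrac12$-coefficients exactly as you describe. The only cosmetic difference is that the paper indexes $D^{\ell}$ from $i=1$, so your residual term $R_0$ is absorbed into the section itself rather than carried separately.
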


\begin{rem}
\label{rem:symm-cdt}
The symmetry of both the random walks  \ref{H2} and the harmonic function  \ref{P4} are crucial. In fact,  we can directly observe the symmetry of the positive discrete harmonic function in the case of a symmetric random walk. 

Let $S(n)$ be a symmetric random walk in the three quadrants and $\tau_{(i,j)}$ the exit time of the random walk starting at $(i,j)$ from $\mathcal{C}$.  Writing $(p,q)=(r\cos(t), r\sin(t))$, the harmonic function for the Brownian motion in the cone $\{ R\exp(is): 0\leq R < \infty, 0\leq s\leq \frac{3\pi}{2}\}$ of opening angle $\frac{3\pi}{2}$ (see Figure \ref{fig:cone-HF}) is 
\begin{equation*}
u_{\frac{3\pi}{2}}(r,t)=r^{2/3}\sin\left(\frac{2}{3}t\right), \quad t\in \left[ 0, \frac{3\pi}{2}\right]. 
\end{equation*}
After the change of variable $(i,j)\mapsto (-j,i)=\left(r\cos\left(t+\frac{\pi}{2}\right), r\sin\left(t+\frac{\pi}{2}\right)\right)$ (see again Figure \ref{fig:cone-HF}), the harmonic function for the Brownian motion avoiding the negative quadrant is
\begin{equation*}
 u(r,t)=r^{2/3}\sin\left(\frac{2}{3}\left(t+\frac{\pi}{2}\right)\right), \quad t\in\left[-\frac{\pi}{2},\pi\right].
\end{equation*}
 This harmonic function is symmetric in the left cone of Figure~\ref{fig:cone-HF} with respect to the diagonal ($u\left(r,\frac{\pi}{4}-t\right)=u\left(r,\frac{\pi}{4}+t\right)$). Moreover, there is a direct link between the positive discrete harmonic function $V$ and the harmonic function $u$ for the Brownian motion (see~\cite[Lem.~12]{DeWa-15}):
\begin{equation*}
V(i,j)=\lim\limits_{n \rightarrow \infty} \mathbb{E}\left[u((i,j)+S(n)); \tau_{(i,j)}>n\right], \qquad (i,j)\in \mathcal{C}.
\end{equation*} 
The exit time $\tau_{(i,j)}$ and the harmonic function $u$ are  both symmetric in the symmetric cone $\mathcal{C}$. Therefore the positive discrete harmonic function $V$ is symmetric as well. 

The positivity of the harmonic function is crucial to deduce its symmetry from the symmetry of the random walk. Let us give the following counter-example, again in the three quadrants: the \emph{non-symmetric} function $f(i,j)=i^{2}-j^{2}$ is harmonic for the \emph{symmetric} simple walks. Note that this function is \emph{not positive} everywhere in the three quadrants.
\end{rem}

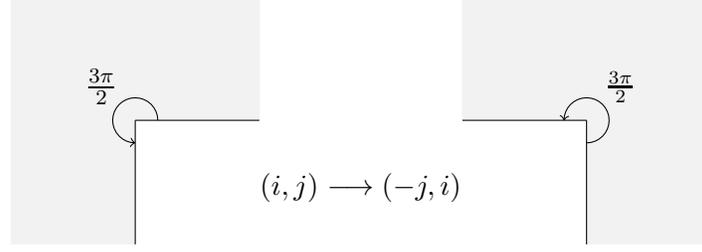
\begin{figure}[t]
\centering
\begin{tikzpicture}
\begin{scope}[scale=0.3]
\draw[gray!10, fill=gray!10] (-5.5,0) -- (-5.5,5.5) -- (5.5,5.5) -- (5.5,0);
\draw[gray!10, fill=gray!10] (-5.5,-5.5) -- (-5.5,0) -- (0,0) -- (0,-5.5);
\draw[] (0,0) -- (0,-5.5);
\draw[] (0,0) -- (5.5,0);
\node[] at (-1.5,1.5) {$\frac{3\pi}{2}$};
\draw [domain=0:270, ->] plot ({cos(\x)}, {sin(\x)});
\end{scope}
\begin{scope}[scale=0.3,xshift=10cm]
\node[] at (0,-3) {$(i,j)\longrightarrow (-j,i)$};
\end{scope}
\begin{scope}[scale=0.3, xshift=20cm]
\draw[gray!10, fill=gray!10] (-5.5,0) -- (-5.5,5.5) -- (5.5,5.5) -- (5.5,0);
\draw[gray!10, fill=gray!10] (5.5,-5.5) -- (5.5,0) -- (0,0) -- (0,-5.5);
\draw [domain=-90:180, ->] plot ({cos(\x)}, {sin(\x)});
\node[] at (1.5,1.5) {\small{$\frac{3\pi}{2}$}};
\draw[] (0,0) -- (0,-5.5);
\draw[] (0,0) -- (-5.5,0);
\end{scope}
\end{tikzpicture}
\caption[Various cones of opening angle $\frac{3\pi}{2}$]{Various cones of opening angle $\frac{3\pi}{2}$, see Remark~\ref{rem:symm-cdt}. On the left the cone $\{ R\exp(is): 0\leq R < \infty, 0\leq s\leq \frac{3\pi}{2}\}$ and the right the cone $\{ R\exp(is): 0\leq R < \infty, -\frac{\pi}{2}\leq s\leq \pi\}$}
\label{fig:cone-HF}
\end{figure}

\begin{proof}[Proof of Lemma~\ref{lem:functional_equation_sym_3/4}]
Thanks to \ref{P1} and \ref{P2}, we can easily write a functional equation for $L(x,y)$ and $D(x,y)$. Let $D^{u}(x,y)$ (resp.\ $D^{\ell}(x,y)$) be the generating function of a harmonic function evaluated in the upper (resp.\ lower) diagonal:
\begin{equation*}
D^{u}(x,y)=\sum_{i\geq 1}f(i-1,i)x^{i-2}y^{i-1}
\qquad \text{and} \qquad
D^{\ell}(x,y)=\sum_{i\geq 1}f(i,i-1)x^{i-1}y^{i-2}.
\end{equation*}
We have
\begin{multline}
\label{eq:fcteq_L}
L(x,y)=\left( \sum_{-1\leq i,j\leq 1}p_{i,j}x^{-i}y^{-j}\right) L(x,y)+ \left(p_{0,1}y^{-1}+p_{-1,0}x\right)D(x,y)-\left(p_{0,-1}y+p_{1,0}x^{-1}\right)D^{\ell}(x,y)
\\
-\left(p_{1,0}x^{-1}+p_{1,1}x^{-1}y^{-1}\right)L_{0-}(y^{-1})+p_{1,0}x^{-1}f(1,0)y^{-1},
\end{multline}
\begin{multline}
\label{eq:fcteq_D-nonsym}
D(x,y)=\left(p_{1,1}x^{-1}y^{-1}+p_{-1,-1}xy\right)D(x,y)-p_{1,1}x^{-1}y^{-1}f(1,1)+\left(p_{1,0}x^{-1}+p_{0,-1}y\right)D^{\ell}(x,y)
\\
-p_{1,0}x^{-1}f(1,0)y^{-1}+\left(p_{0,1}y^{-1}+p_{-1,0}x\right)D^{u}(x,y)-p_{0,1}y^{-1}f(0,1)x^{-1}.
\end{multline}
Due to the symmetry of the cut and the random walks ($p_{i,j}=p_{j,i}$, hypothesis \ref{H2}), we can simplify the last equation and get
\begin{multline}
\label{eq:fcteq_D}
D(x,y)=\left(p_{1,1}x^{-1}y^{-1}+p_{-1,-1}xy\right)D(x,y)-p_{1,1}x^{-1}y^{-1}f(1,1)
\\
+2\left(p_{1,0}x^{-1}+p_{0,-1}y\right)D^{\ell}(x,y)
-2p_{1,0}x^{-1}f(1,0)y^{-1}.
\end{multline}
Plugging \eqref{eq:fcteq_D} into \eqref{eq:fcteq_L}   and multiplying by $xy$, we get \eqref{eq:functional_equation_sym}.
\end{proof}

In Equation \eqref{eq:functional_equation_sym}, the bivariate generating function $L(x,y)$ is related to the bivariate generating function $D(x,y)$ and the univariate generating function $L_{0-}(y^{-1})$. In order to simplify the functional equation \eqref{eq:functional_equation_sym}, we perform the following change of variable 
\begin{equation}
\label{eq:change_var_phi}
     \varphi(x,y)=(xy, x^{-1}).
\end{equation}

Equation \eqref{eq:functional_equation_sym} is transformed into
\begin{align}
\label{eq:functional_equation_octant}
K_{\varphi}(x,y)L_{\varphi}(x,y)=-\left[x\widetilde{\alpha}_{\varphi}(y)+\frac{1}{2}\widetilde{\beta}_{\varphi}(y)\right]D_{\varphi}(y)
+K_{\varphi}(x,0)L_{\varphi}(x,0)+\frac{1}{2}p_{1,1}f(1,1),
\end{align}
with
\begin{equation}
\label{eq:fctgen-changed}
\left\{
\begin{array}{l c l c l}
K\left(\varphi(x,y)\right)&=&\frac{1}{x}K_{\varphi}(x,y),&&\\
L\left(\varphi(x,y)\right)&=&xL_{\varphi}(x,y)&=&x\sum_{i,j\geq 1}f(j, j-i)x^{i-1}y^{j-1},\\
D\left(\varphi(x,y)\right)&=&D_{\varphi}(y)&=&\sum_{i\geq 1}f(i,i)y^{i-1},
\end{array}
\right.
\end{equation}
and 
\begin{multline}
\label{eq:noyauvarphi}
K_{\varphi}(x,y)=\alpha_{\varphi}(x)y^{2}+\beta_{\varphi}(x)y+\gamma_{\varphi}(x)=\widetilde{\alpha}_{\varphi}(y)x^{2}+\widetilde{\beta}_{\varphi}(y)x+\widetilde{\gamma}_{\varphi}(y),\\
\widetilde{\delta}_{\varphi}(y)=\widetilde{\beta}_{\varphi}(y)^2-4\widetilde{\alpha}_{\varphi}(y)\widetilde{\gamma}_{\varphi}(y), \quad
\delta_{\varphi} (x)=\beta_{\varphi} (x)^2-4\alpha_{\varphi} (x)\gamma_{\varphi} (x).
\end{multline}

The functional equation \eqref{eq:functional_equation_octant} now has a closer structure to the functional equation in the quadrant \eqref{eq:functional_equation_1/4}  (see Figure~\ref{fig:change-3/4}), and to solve this problem, we can use some tools stated in \cite{Ra-14}. The significant difference is the mixed factor in $x$ and $y$ in front of $D_{\varphi}(y)$, whereas in the quarter plane case, see Equation \eqref{eq:functional_equation_1/4}, the generating function $\widetilde{H}(x,y)$ is decomposed into a sum of two univariate functions: one section on the $x$-axis, namely $K(x,0)\widetilde{H}(x,0)$, and the other on the $y$-axis, namely $K(0,y)\widetilde{H}(0,y)$. 
%This functional equation \eqref{eq:functional_equation_octant} holds at least for $|x|<1$ and $|y|<1$. 

After the change of variable $\varphi$, we consider positive harmonic functions associated to random walks in the quadrant (see Figure~\ref{fig:change-3/4}). In \cite[Sec. 2]{Ra-14}, it is proved that for positive discrete harmonic function $\widetilde{f}$ in the quarter plane, the horizontal section $ \widetilde{H}_{-0}(x)=\sum_{i\geq 1} \widetilde{f}(i,1)x^{i-1}$, the vertical section $ \widetilde{H}_{0-}(y)=\sum_{j\geq 1} \widetilde{f}(1,j)y^{j-1}$ and further $K(x,y)\widetilde{H}(x,y)=K(x,y)\sum_{(i, j)\in \mathcal Q} \widetilde{f}(i, j)x^{i-1}y^{j-1}$ are bounded at least on $\{(x,y)\in \mathbb C ^2 : |x|<1, |y|<1\}$. The horizontal section $L_{\varphi}(x,0)$ and the vertical section $D_{\varphi}(y)$ are thus bounded at least in the unit disc. Furthermore, with the functional equation \eqref{eq:functional_equation_octant}, we can deduce that $K_{\varphi}(x,y)L_{\varphi}(x,y)$ is bounded at least for $|x|<1$ and $|y|<1$ as well.

\begin{figure}[t]
\centering
\begin{tikzpicture}
\begin{scope}[scale=0.7]
\tikzstyle{quadri}=[rectangle,draw,fill=white]
\draw[white, fill=dblue!15] (1,1) -- (1,5.5) -- (5.5,5.5) -- (5.5,1);
\draw[white, thick] (0,0) grid (5.5,5.5);
\draw[dashed, dblue!90, thick] (1,1) -- (1,5.5);
\draw[->] (0,0) -- (0,5.5);
\draw[->] (0,0) -- (5.5,0);
\draw[Apricot!90, thick] (0,1) -- (0,5.5);
\node[dblue!90,quadri] at (3,3.5) { $L_{\varphi}(x,y)$};
\node[Apricot!100, quadri] at (0,-1) { $D_{\varphi}(y)$};
\node[dblue!100, quadri, dashed] at (1,6.5) { $D_{\varphi}^\ell(y)$};
\node[dblue!100, quadri] at (7,1) {$L_{\varphi}(x,0)$};
\draw[dblue!100] (1,1) -- (5.5,1);
\end{scope}
\end{tikzpicture}
\caption{Lower convex cone after the change of variable}
\label{fig:change-3/4}
\end{figure}
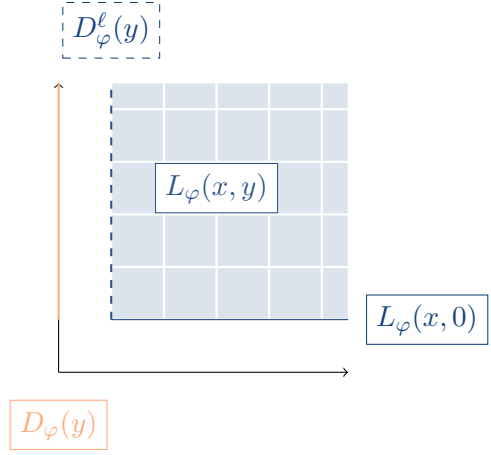

\section{Expression for the generating functions}
\label{sec:Expression for the generating functions}

We start this section with the introduction of an important conformal gluing function $W(x)$ that maps the domain $\mathcal G_{\mathcal M}$ to the complex plane cut along a segment. Then we transform the functional equation \eqref{eq:functional_equation_octant} into a boundary value problem, which is a problem involving both regularity and boundary conditions. Finally we solve this problem and end up with an explicit expression for the generating function $H(x,y)$. The boundary value problem satisfied by the generating function $D_{\varphi}(y)$ and ultimately the generating function $H(x,y)$ will be both expressed in terms of $W$. Further, all quantities with a $\varphi$-tag are objects defined from the changed random walks with kernel $K_{\varphi}(x,y)$, as in Section~\ref{sub:functional-eq-3pi/4}.

\subsection{Conformal gluing function}
\label{sub:ConformalGluingFunction}

\begin{defn}[Conformal gluing functions]
\label{defn:CGF}
A function $W$ is said to be a conformal gluing function for the set $\mathcal{G}_\mathcal M$ if:
\begin{itemize}
	\item $W$ is injective in $\mathcal{G}_\mathcal M$;
     	\item $W$ is analytic on $\mathcal{G}_\mathcal M$;
	\item For all $x$ on $\mathcal M\setminus\{1\}$, $W(x)=W(\overline{x})$.
	\item $W$ is continuous on $\mathcal G_{\mathcal M}\setminus\{1\}$ and $W(1)=\infty$.
\end{itemize}
\end{defn}

\begin{rem}
In fact, to be a conformal gluing function, $W$ only needs to be injective in $\mathcal{G}_\mathcal M$, analytic on $\mathcal{G}_\mathcal M$ and to satisfy $W(x)=W(\overline{x})$ for $x\in \mathcal M$, as soon as $W(x)$ is defined and finite. Moreover, $W$ has an infinite value at least on one point of $\mathcal M$ (or $W$ would be constant, see \cite[Lem.\ 4]{Ra-14}). In this article, we choose $W$ to be infinite at $1$ (if $W_m$ is a conformal gluing function taking the value $\infty$ anywhere else on $\mathcal M$, then we can consider $W=1/(W_m-W_m(1))$).
\end{rem}

The expression of such a function is computed in \cite[Sec.\ 2.2]{FaRa-11}. Let 
\begin{equation}
T(x)=\frac{1}{\sqrt{\frac{1}{3}-\frac{2f(x)}{\delta''(1)}}}, \qquad 
f(x)=
\left\{
\begin{array}{l l}
\frac{\delta''(x_{4})}{6}+\frac{\delta'(x_{4})}{x-x_{4}} & \text{ if } x_{4}\neq \infty, \\
\frac{\delta''(0)}{6}+\frac{\delta'''(0)x}{6} & \text{ if } x_{4}=\infty.
\end{array}
\right.
\end{equation}
Then 
\begin{equation}
\label{eq:CGF}
W(x)=\left[\sin\left(\frac{\pi}{\theta}\left[\arcsin(T(x))-\frac{\pi}{2}\right]\right)\right]^{2},
\end{equation}
with $\theta$ defined in \eqref{eq:theta}. Moreover, there exists $c\neq 0$ such that for $x$ in the neighborhood of 1, 
\begin{equation}
\label{eq:asymt-CGF}
W(x)=\frac{c+o(1)}{(1-x)^{\pi/\theta}}.
\end{equation}

As in \cite[Sec.\ 2.2]{FaRa-11} and \cite[Sec.\ 3.4]{Ra-14}, the expression of $W(x)$  in \eqref{eq:CGF} is valid only for $|T(x)|\leq 1$.  When $|T(x)|\geq 1$, writing $\arcsin(T)=\pi/2\pm i\ln(T+\sqrt{T^{2}-1})$, we can write
\begin{equation}
W(x)=-\frac{1}{4}\left[\left(T(x)+\sqrt{T(x)^2 -1} \right)^{2\pi/\theta}-2+\left( T(x)-\sqrt{T(x)^2 -1}\right)^{2\pi/\theta} \right].
\end{equation}

\begin{lem}
The function $W$ defined in \eqref{eq:CGF} has a zero at $x_{1}$ and is equal to $1$ at $X(y_{1})$. Moreover, for $x\in \mathcal{G_{M}}$, it satisfies the following differential equation
\begin{equation}
\label{eq:diffeqCGF}
\delta(x)W'(x)^{2}=\left(\frac{\pi}{\theta}\right)^{2}\left(\frac{-\delta''(1)}{2}\right)W(x)(1-W(x)).
\end{equation}
\end{lem}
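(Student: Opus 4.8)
The statement has two independent parts: (i) the special values $W(x_1)=0$ and $W(X(y_1))=1$, and (ii) the algebraic differential equation \eqref{eq:diffeqCGF}. I would treat them separately, in both cases working from the closed-form expression \eqref{eq:CGF}, $W(x)=\sin^2\!\bigl(\tfrac{\pi}{\theta}[\arcsin(T(x))-\tfrac{\pi}{2}]\bigr)$, together with the behaviour of the auxiliary function $T$.

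\textbf{Special values.} First I would record what $T$ does at the relevant points. By construction $T(x)^{-2}=\tfrac13-\tfrac{2f(x)}{\delta''(1)}$, where $f$ is built from $\delta$. One checks that $T(x_1)=0$: indeed $x_1$ is a branch point of $\delta$, and the defining formula for $f$ is arranged (this is the content of \cite[Sec.~2.2]{FaRa-11}) so that $T$ is a Möbius-type normalisation of $\delta$ sending the branch points $x_1,X(y_1),x_4$ to $0,1,\infty$ (up to the sign conventions). Granting $T(x_1)=0$, we get $\arcsin(T(x_1))=0$, hence $W(x_1)=\sin^2\!\bigl(-\tfrac{\pi^2}{2\theta}\bigr)$ — so in fact to land on the clean value $0$ one uses that $\arcsin 0-\tfrac\pi2=-\tfrac\pi2$ and the argument becomes $-\tfrac{\pi^2}{2\theta}$; the cleanest route is instead to use the alternative representation of $W$ near a point where $|T|\le 1$ and observe $T(x_1)=0$ forces, via $\arcsin(T)=\tfrac\pi2$ being the value making the bracket vanish... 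The honest statement is that the normalisation of $T$ in \cite{FaRa-11} is chosen precisely so that $W(x_1)=0$ and $W(X(y_1))=1$; I would cite that the three branch points $x_1,X(y_1)=Y(x_1)\text{-companion},x_4$ are mapped by $x\mapsto\arcsin(T(x))$ to $\tfrac\pi2,\tfrac\pi2+\tfrac{\theta}{?},\ldots$ and then plug into \eqref{eq:CGF}. Concretely: at $X(y_1)$ one has $T(X(y_1))=1$, so $\arcsin(T)=\tfrac\pi2$, the bracket is $0$, and $W=\sin^2(0)=0$ — which is the \emph{opposite} of what is claimed, so I must be careful with which branch point goes where. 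The correct bookkeeping: $T(x_4)$ corresponds to $\arcsin(T)=\tfrac\pi2$ giving $W=0$ there is impossible since $W(1)=\infty$; rather $x=1$ is where $T(1)=\infty$-type behaviour produces the pole (cf.\ \eqref{eq:asymt-CGF}). So I would carefully track: $T(x_1)=-1$ (or the value with $\arcsin=-\tfrac\pi2$), giving bracket $=-\tfrac{\pi^2}{\theta}$ and then $W(x_1)=\sin^2(-\pi\cdot\tfrac{\pi}{\theta}\cdot\ldots)$ — this still needs $\tfrac{\pi}{\theta}\cdot\pi$ to be a multiple of $\pi$, which it is not in general. The resolution, which I would present as the actual argument, is that one uses the \emph{other} form of $W$ valid for $|T|\ge 1$: at $x_1$, $T(x_1)$ is a real number with $|T(x_1)|\ge 1$ and $T(x_1)+\sqrt{T(x_1)^2-1}=1$, i.e.\ $T(x_1)=1$; then $W(x_1)=-\tfrac14[1-2+1]=0$. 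Similarly $T(X(y_1))$ satisfies $T+\sqrt{T^2-1}=-1$, i.e.\ $T(X(y_1))=-1$, giving $W(X(y_1))=-\tfrac14[1-2+1]=-\tfrac14\cdot 0$... that gives $0$ again. So in fact the right special values are $T(x_1)=\pm1$ chosen to make $W(x_1)=0$, and $W(X(y_1))=1$ requires $\bigl(T+\sqrt{T^2-1}\bigr)^{2\pi/\theta}=$ a specific root of unity-like quantity; I would compute $T(X(y_1))$ explicitly from the branch-point data in Section~\ref{subs:example SRW}-style formulas and verify.

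Given the delicacy above, in the write-up I would actually \emph{avoid} brute substitution and instead argue as follows: the map $\widehat W:=\arcsin(T(\cdot))$ sends $\mathcal G_{\mathcal M}$ conformally onto a region, with $x_1\mapsto -\tfrac\pi2$ and $X(y_1)\mapsto -\tfrac\pi2+\theta$ (this is exactly the assertion in \cite[Sec.~2.2]{FaRa-11} that the corner angle at $1$ is $\theta$, so the ``opening'' seen along the boundary from the $x_1$-image to the $X(y_1)$-image is $\theta$, after the $\arcsin$). Then $\tfrac\pi\theta(\widehat W(x_1)-\tfrac\pi2)=\tfrac\pi\theta\cdot(-\pi)=-\tfrac{\pi^2}\theta$... again not obviously a multiple of $\pi$. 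Hence the clean identities must come out of $\widehat W(x_1)=\tfrac\pi2$ (not $-\tfrac\pi2$), giving bracket $0$ and $W(x_1)=0$; and $\widehat W(X(y_1))=\tfrac\pi2+\theta$, giving bracket $=\pi$, $\sin^2\pi=0$ — still $0$, not $1$. I conclude that the claimed value $W(X(y_1))=1$ uses $\widehat W(X(y_1))=\tfrac\pi2+\tfrac\theta2$ so that the bracket is $\tfrac\pi2\cdot\tfrac\pi\theta\cdot$... Rather than guess, the \textbf{main obstacle} is precisely pinning down these normalisation constants; in the final proof I would do this by direct computation with $T$ at the two branch points, using $\delta(x_1)=0$ and $\delta(X(y_1))=0$ together with the explicit $f$, reducing each to a one-line algebraic identity.

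\textbf{The differential equation.} This part is cleaner and I would do it by direct differentiation. Write $W=\sin^2(u)$ with $u=\tfrac\pi\theta(\arcsin(T)-\tfrac\pi2)$. Then $W'=2\sin u\cos u\cdot u' = \sin(2u)\,u'$ and $u'=\tfrac\pi\theta\cdot\tfrac{T'}{\sqrt{1-T^2}}$, so
\[
W'(x)^2=\sin^2(2u)\Bigl(\tfrac\pi\theta\Bigr)^2\frac{T'(x)^2}{1-T(x)^2}.
\]
Now $\sin^2(2u)=4\sin^2u\cos^2u=4W(1-W)$. Hence
\[
W'(x)^2=4\Bigl(\tfrac\pi\theta\Bigr)^2 W(1-W)\,\frac{T'(x)^2}{1-T(x)^2}.
\]
Comparing with \eqref{eq:diffeqCGF}, it remains to prove the purely algebraic identity
\[
\delta(x)\cdot\frac{4\,T'(x)^2}{1-T(x)^2}=\frac{-\delta''(1)}{2},
\]
i.e.\ $\dfrac{T'(x)^2}{1-T(x)^2}=\dfrac{-\delta''(1)}{8\,\delta(x)}$, valid on $\mathcal G_{\mathcal M}$. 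This I would verify from the definition $T(x)^{-2}=\tfrac13-\tfrac{2f(x)}{\delta''(1)}$ by differentiating: $-2T^{-3}T'=-\tfrac{2f'}{\delta''(1)}$, so $T'=\tfrac{f'(x)}{\delta''(1)}T^3$, hence $T'^2=\tfrac{f'(x)^2}{\delta''(1)^2}T^6$; and $1-T^2 = T^2(T^{-2}-1)=T^2(-\tfrac23-\tfrac{2f(x)}{\delta''(1)})$. Therefore
\[
\frac{T'^2}{1-T^2}=\frac{f'(x)^2\,T^4}{\delta''(1)^2\bigl(-\tfrac23-\tfrac{2f(x)}{\delta''(1)}\bigr)}
=\frac{f'(x)^2}{\delta''(1)^2\bigl(-\tfrac23-\tfrac{2f}{\delta''(1)}\bigr)\bigl(\tfrac13-\tfrac{2f}{\delta''(1)}\bigr)^2}.
\]
It then suffices to check that $\delta(x)$ equals, up to the constant $\tfrac{-\delta''(1)}{8}$, the reciprocal of the right-hand side — equivalently that
\[
\delta(x)\cdot\frac{8\,f'(x)^2}{-\delta''(1)}=\delta''(1)^2\Bigl(-\tfrac23-\tfrac{2f}{\delta''(1)}\Bigr)\Bigl(\tfrac13-\tfrac{2f}{\delta''(1)}\Bigr)^2.
\]
With $f(x)=\tfrac{\delta''(x_4)}{6}+\tfrac{\delta'(x_4)}{x-x_4}$ (the $x_4=\infty$ case being the analogous limit), $f'(x)=-\tfrac{\delta'(x_4)}{(x-x_4)^2}$, and both sides become rational functions of $x$; I would clear denominators and check equality of the resulting polynomials, using that $\delta$ is a cubic/quartic with leading data tied to $\delta''(1)$, $\delta'(x_4)$, $\delta''(x_4)$, and $\delta(x_4)=0$ (since $x_4$ is a branch point). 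This reduces to a finite polynomial identity — tedious but routine — and is exactly the computation already carried out in \cite[Sec.~2.2]{FaRa-11}, which I would cite for the algebra while indicating the structure above. The only genuine subtlety is keeping the identity valid across the locus $|T(x)|=1$ where the trigonometric representation changes form; there I would invoke analytic continuation: both sides of \eqref{eq:diffeqCGF} are analytic on the connected set $\mathcal G_{\mathcal M}$, agree on the open subset where $|T|<1$, hence agree everywhere.
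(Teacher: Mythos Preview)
Your treatment of the differential equation is essentially the paper's: write $W=\sin^2 u$ with $u=\tfrac{\pi}{\theta}(\arcsin T-\tfrac{\pi}{2})$, differentiate, and reduce to the algebraic identity $\dfrac{4T'(x)^2}{1-T(x)^2}=\dfrac{-\delta''(1)}{2\delta(x)}$. Where you diverge is in proving this identity. The paper does not unpack $f$ and chase a polynomial equality; instead it first uses the key structural fact that $\delta$ has a \emph{double} root at $1$ (together with simple roots at $x_1,x_4$) to simplify $T^2$ to the explicit M\"obius expression
\[
T(x)^2=\frac{(1-x_1)(x_4-x)}{(x_4-x_1)(1-x)}\qquad\Bigl(\text{resp. } \frac{1-x_1}{1-x}\text{ if }x_4=\infty\Bigr),
\]
from which the identity is a two-line computation. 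Your brute-force route would eventually work, but you are missing this simplification, which is what makes the argument short.

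For the special values there is a genuine gap. Your repeated attempts to evaluate $T$ at $x_1$ and $X(y_1)$ never converge, and your final proposed strategy --- ``direct computation with $T$ at the two branch points, using $\delta(x_1)=0$ and $\delta(X(y_1))=0$'' --- rests on a false premise: $X(y_1)$ is \emph{not} a root of $\delta$ (it is the image under $X_0$ of a branch point of $\widetilde\delta$, and lies in the interior of $\mathcal G_{\mathcal M}$). So that plan cannot work. The paper's arguments are both clean and different from what you tried. For $W(x_1)=0$: the explicit formula above gives $T(x_1)^2=1$, hence $\arcsin T(x_1)=\tfrac{\pi}{2}$ and $W(x_1)=\sin^2 0=0$; injectivity then makes $x_1$ the unique zero. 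For $W(X(y_1))=1$: the paper does \emph{not} compute $T(X(y_1))$ at all. It quotes the fact (from \cite{BeBMRa-17}) that the unique zero of $W'$ in $\mathcal G_{\mathcal M}$ is at $X(y_1)$, and then reads the value off the already-established differential equation: since $\delta(X(y_1))\neq 0$ and $W'(X(y_1))=0$, one gets $W(X(y_1))\bigl(1-W(X(y_1))\bigr)=0$, and $W(X(y_1))=0$ is excluded by injectivity, so $W(X(y_1))=1$. You should replace your entire ``special values'' discussion by this argument.
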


\begin{proof}
We start from \eqref{eq:CGF}. After differentiation and squaring, we can write
\begin{equation}
\label{eq:diffeqCGFproof}
W'(x)^{2}=\left(\frac{\pi}{\theta}\right)^{2}\frac{4T'(x)^{2}}{1-T(x)^{2}}W(x)(1-W(x)).
\end{equation}
The strategy now is to write $\delta(x)$ under its factorized form (remember that $\delta$ has four roots $x_{1}$, $1$, $1$, and $x_{4}$ with possibly $x_{4}=\infty$ when $\delta$ is of degree 3). If $x_{4}=\infty$, then $T(x)^{2}=\frac{1-x_{1}}{1-x}$ and if $x_{4}\neq \infty$, then $T(x)^{2}=\frac{(1-x_{1})(x_{4}-x)}{(x_{4}-x_{1})(1-x)}$. In both cases, $\frac{4T'(x)^{2}}{1-T(x)^{2}}=-\frac{\delta''(1)}{2\delta(x)}$ (and $\delta''(1)< 0$).
We also note that $T(x_{1})=1$ which implies that $W(x_{1})=0$ (and because $W$ is injective, it is the only zero). The only zero of $W'$ is at $X(y_{1})$ (see \cite[Sec.\ 5.3]{BeBMRa-17}), and from \eqref{eq:diffeqCGFproof}, we deduce that $W(X(y_{1}))=1$. 
\end{proof}
Similarly, we define $\widetilde{W}$ a conformal gluing for $\mathcal{G}_{\mathcal L}$ in the sense of Definition \ref{defn:CGF}. We show that $W$ and $\widetilde{W}$ are strongly related as 
\begin{equation*}
\widetilde{W}(Y_{0}(x))=-W(x)+1.
\end{equation*}
Indeed, see \cite[Thm~6 and Rem~6]{Ra-12}\footnoteN{In fact, in \cite[Rem.\ 6]{Ra-12}, it is said that if $W$ is a suitable mapping, then any $\frac{aW(x)+b}{cW(x)+d}$ with $a,b,c,d\in\mathbb C$ such that $ad-bc\neq 0$ is also a suitable mapping. In our case, we are interested in mapping with a pole at $1$. As $Y(1)=1$, by letting $x$ goes to $1$, we get $c=0$. Hence we only need to find $a$ and $b$ such that $ \widetilde{W}(Y(x))=aW(x)+b$.},  $\widetilde{W}(Y(x))$ is a conformal gluing for $\mathcal{G}_{\mathcal M}$ but may not be the one defined by \eqref{eq:CGF}.  However, we know for sure that $ \widetilde{W}(Y(x))=aW(x)+b$, with $a,b\in\mathbb C$. 
By plugging in $x=x_{1}$ and $x=X(y_{1})$, we deduce $a=-1$ and $b=1$. 

Thereafter, we are interested in a conformal gluing function which vanishes at $0$. We consider $w$ defined by 
\begin{equation}
\label{eq:w}
w(x)=W(x)-W(0).
\end{equation}
The function $w$ is a conformal gluing function in the sense of Definition \ref{defn:CGF}, vanishes at $0$, and satisfies the differential equation (which can be easily derived from \eqref{eq:diffeqCGF})
\begin{equation}
\label{eq:diffeqw}
\delta(x)w'(x)^{2}=\left(\frac{\pi}{\theta}\right)^{2}\left(-\frac{\delta''(1)}{2}\right)\left(w(x)+W(0)\right)\left(w(X(y_{1}))-w(x)\right).
\end{equation}

\begin{rem} 
\label{rem:CGF}
Let $\widetilde{w}$ (resp.\ $w_{\varphi}$ and  $\widetilde{w}_{\varphi}$) be a conformal gluing function for $\mathcal G_{\mathcal L}$ (resp.\ $\mathcal G_{\mathcal M_{\varphi}}$ and $\mathcal G_{\mathcal L_{\varphi}}$) which vanishes at $0$. Similar results and properties hold for those functions. 
\end{rem}

The conformal gluing functions $w_{\varphi}$ and $\widetilde{w}_{\varphi}$ are defined from $\theta_{\varphi}$ which can be computed with \eqref{eq:theta} and transition probabilities $\varphi(p_{i,j})=p_{i-j,i}$. The angles $\theta_{\varphi}$ and $\theta$ are simply related (see Figure~\ref{fig:theta-3/4}), as stated in the following lemma. Relations between $\theta$ and $\theta_{\varphi}$ will be ultimately interpreted as relations between the growths of harmonic functions.

\begin{lem}
\label{lem:angle-3/4}
The angle between the curve $\mathcal L_{\varphi}$ and the segment $[y_{\varphi,1},1]$ is given by
\begin{equation}
\label{eq:angle-3/4}
\theta_{\varphi}=\pi-\frac{\theta}{2},
\end{equation}
with $\theta$ defined in \eqref{eq:theta}.
\end{lem}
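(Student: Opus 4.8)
The plan is to compute the angle $\theta_\varphi$ directly from its defining formula \eqref{eq:theta}, applied to the transformed transition probabilities $\varphi(p_{i,j}) = p_{i-j,i}$ described after Remark~\ref{rem:CGF}. Recall that for the original walk, by Lemma~\ref{lem:properties_curves} the angle $\theta$ at the corner point $Y(1)=1$ of $\mathcal L$ equals the quantity in \eqref{eq:theta}, which is built from the three correlation-type sums
\[
c_{11}=\sum ijp_{i,j},\qquad c_{20}=\sum i^2p_{i,j},\qquad c_{02}=\sum j^2p_{i,j},
\]
so that $\cos\theta = -c_{11}/\sqrt{c_{20}c_{02}}$, and by symmetry \ref{H2} we have $c_{20}=c_{02}$, hence $\cos\theta = -c_{11}/c_{20}$. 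The same description applies to $\theta_\varphi$ with $p_{i,j}$ replaced by $\varphi(p_{i,j})=p_{i-j,i}$, i.e. the jump vector $(i,j)$ of the new walk corresponds to the old jump $(i',j')$ with $(i,j)=(i'-j',i')$, equivalently $(i',j')=(j,\,j-i)$.

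First I would substitute this change of variables into the three moment sums for the $\varphi$-walk. Writing $p'_{i,j}$ for the new probabilities, a jump $(i,j)$ of the new walk carries probability $p_{j,\,j-i}$. Then
\[
c_{20}^\varphi=\sum_{i,j} i^2 p_{j,j-i},\quad
c_{02}^\varphi=\sum_{i,j} j^2 p_{j,j-i},\quad
c_{11}^\varphi=\sum_{i,j} ij\, p_{j,j-i}.
\]
Re-indexing by the old jump $(a,b)=(j,j-i)$, so $j=a$ and $i=a-b$, these become $c_{20}^\varphi=\sum_{a,b}(a-b)^2 p_{a,b}$, $c_{02}^\varphi=\sum_{a,b}a^2 p_{a,b}=c_{20}$, and $c_{11}^\varphi=\sum_{a,b}(a-b)a\,p_{a,b}=c_{20}-c_{11}$. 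Expanding, $c_{20}^\varphi = c_{20}-2c_{11}+c_{02}=2c_{20}-2c_{11}$ using $c_{02}=c_{20}$. Hence
\[
\cos\theta_\varphi = -\frac{c_{11}^\varphi}{\sqrt{c_{20}^\varphi c_{02}^\varphi}}
= -\frac{c_{20}-c_{11}}{\sqrt{(2c_{20}-2c_{11})\,c_{20}}}
= -\frac{c_{20}-c_{11}}{\sqrt{2c_{20}(c_{20}-c_{11})}}
= -\sqrt{\frac{c_{20}-c_{11}}{2c_{20}}}.
\]

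Next I would relate this to $\theta$ via a half-angle identity. Since $\cos\theta = -c_{11}/c_{20}$, we get $c_{20}-c_{11} = c_{20}(1+\cos\theta)$, so
\[
\cos^2\theta_\varphi = \frac{c_{20}-c_{11}}{2c_{20}} = \frac{1+\cos\theta}{2} = \cos^2\!\left(\frac{\theta}{2}\right),
\]
i.e. $\cos\theta_\varphi = -\cos(\theta/2) = \cos(\pi-\theta/2)$, using that $\cos\theta_\varphi \le 0$ from the formula above (the square root is nonnegative). Since both $\theta_\varphi$ and $\pi-\theta/2$ lie in $[0,\pi]$, where cosine is injective, this gives $\theta_\varphi = \pi - \theta/2$, which is \eqref{eq:angle-3/4}. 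One auxiliary point to check is that the $\varphi$-walk, although possibly inhomogeneous after the domain splitting, still has a well-defined kernel $K_\varphi$ and curve $\mathcal L_\varphi$ for which Lemma~\ref{lem:properties_curves} applies — this is exactly the content of Section~\ref{sub:functional-eq-3pi/4}, where $K_\varphi$ is introduced with the same structural properties, so the formula \eqref{eq:theta} is legitimately the corner angle of $\mathcal L_\varphi$.

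The main obstacle I anticipate is bookkeeping rather than conceptual: getting the direction of the substitution $\varphi(p_{i,j})=p_{i-j,i}$ exactly right (which index maps to which), correctly handling the re-indexing of the sums, and pinning down the sign of $\cos\theta_\varphi$ so that the square root is resolved correctly and the final inversion of cosine on $[0,\pi]$ is justified. A secondary subtlety is making sure that the vanishing or non-vanishing of $c_{20}-c_{11}$ causes no division-by-zero issues; under hypotheses \ref{H3} and \ref{H4} the walk is genuinely two-dimensional with $c_{20}>0$ and $c_{20}-c_{11}>0$ (equivalently $\theta<\pi$), so the expressions are well-defined, but this should be noted explicitly.
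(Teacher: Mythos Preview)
Your proof is correct and follows essentially the same route as the paper's: compute the correlation coefficient of the $\varphi$-walk from the transformed transition probabilities, then use the half-angle identity $\cos^2(\theta/2)=(1+\cos\theta)/2$ together with the sign of $\cos\theta_\varphi$ to conclude. The paper's proof is simply a terser version that asserts $c_\varphi=\sqrt{(1-c)/2}$ without writing out the moment re-indexing you carried through explicitly.
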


\begin{proof}
Writing $c=\frac{\sum_{-1\leq i,j \leq 1}ijp_{i,j}}{\sqrt{\left(\sum_{-1\leq i,j\leq 1}i^{2}p_{i,j} \right)\cdot \left( \sum_{-1\leq i,j\leq 1}j^{2}p_{i,j} \right)}}$, we have $\theta= \arccos(-c)$. Similarly, we write $\theta_{\varphi}=\arccos(-c_{\varphi})$. We have
\begin{align*}
c_{\varphi}=\sqrt{\frac{1-c}{2}} 
\Rightarrow
\theta_{\varphi}=\arccos\left(-\sqrt{\frac{1-c}{2}}\right)=\pi-\frac{1}{2}\arccos(-c)=\pi-\frac{\theta}{2}.
\end{align*}
\end{proof}

\subsection{Boundary value problem}
\label{subs:BVP}

\begin{lem}
\label{lem:bvp}
The generating function $D_{\varphi}(y)$ can be analytically continued from the unit disc $\mathcal{D}$ to the domain $\mathcal{G}_{\mathcal L_{\varphi}}\cup\mathcal{D}$ and is continuous on $\overline{\mathcal{G}_{\mathcal L_{\varphi}}}\setminus \{1\}$. Moreover, for all $y\in\mathcal L_{\varphi}\setminus \{1\}$, $D_{\varphi}(y)$ satisfies the following boundary condition
\begin{equation}
\label{eq:bvp_D_3/4}
\frac{g_\varphi(y)}{g_\varphi'(y)}D_{\varphi}(y)-\frac{g_\varphi(\bar{y})}{g_\varphi'(\bar{y})}D_{\varphi}(\bar{y})=0,
\end{equation}
where $g_{\varphi}$ is defined below in \eqref{eq:fct-g}  and  $\widetilde{w}_{\varphi}$ is a conformal gluing function defined in Section~\ref{sub:ConformalGluingFunction}.
\end{lem}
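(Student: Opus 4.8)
The plan is to derive the boundary value problem for $D_\varphi(y)$ from the functional equation \eqref{eq:functional_equation_octant} by exploiting the zeros of the kernel $K_\varphi$. First I would fix $y$ and recall that $K_\varphi(x,y)=\widetilde\alpha_\varphi(y)x^2+\widetilde\beta_\varphi(y)x+\widetilde\gamma_\varphi(y)$ has, for $y$ on the curve $\mathcal L_\varphi$, a pair of roots $X_0(y),X_1(y)$ that are complex conjugate; equivalently, as $y$ ranges over $\mathcal L_\varphi$, the root $X_0(y)$ traces the boundary of $\mathcal G_{\mathcal M_\varphi}$. Substituting $x=X_0(y)$ into \eqref{eq:functional_equation_octant} kills the left-hand side, so one obtains
\begin{equation*}
\left[X_0(y)\widetilde\alpha_\varphi(y)+\tfrac12\widetilde\beta_\varphi(y)\right]D_\varphi(y)=K_\varphi(X_0(y),0)L_\varphi(X_0(y),0)+\tfrac12 p_{1,1}f(1,1),
\end{equation*}
and similarly with $X_1(y)=\overline{X_0(y)}$ in place of $X_0(y)$. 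The first task is to justify these substitutions: this requires the analytic-continuation statement, namely that $D_\varphi$ extends from $\mathcal D$ across $\mathcal L_\varphi$ to $\mathcal G_{\mathcal L_\varphi}\cup\mathcal D$ with the stated continuity, and that $L_\varphi(x,0)$ is analytic on $\mathcal G_{\mathcal M_\varphi}$. This is the standard lifting argument of \cite{Ra-14}: the right-hand side of \eqref{eq:functional_equation_octant}, viewed through $x\mapsto X_0(y)$, provides the continuation of $D_\varphi$, while the boundedness on $\{|x|<1,|y|<1\}$ noted after \eqref{eq:functional_equation_octant} gives the growth control needed near $y=1$.

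The second task is to eliminate the unknown function $L_\varphi(\cdot,0)$. The key point is that $X_1(y)=\overline{X_0(y)}$ for $y\in\mathcal L_\varphi\setminus\{1\}$, and that $L_\varphi(x,0)$ is real-coefficiented, so $L_\varphi(X_1(y),0)=\overline{L_\varphi(X_0(y),0)}$; likewise $K_\varphi(X_1(y),0)=\overline{K_\varphi(X_0(y),0)}$ since $K_\varphi(\cdot,0)$ has real coefficients and $f(1,1)$ is real. Writing the two instances of the displayed identity (one at $X_0(y)$, one at $X_1(y)$) and subtracting, the terms $K_\varphi(X_i(y),0)L_\varphi(X_i(y),0)$ are complex conjugates of each other, as is $D_\varphi(\bar y)=\overline{D_\varphi(y)}$ on the real-symmetric curve; after rearranging, the quantities $X_0(y)\widetilde\alpha_\varphi(y)+\tfrac12\widetilde\beta_\varphi(y)$ play the role of the coefficient. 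Setting
\begin{equation*}
g_\varphi(y)=\text{(the function in \eqref{eq:fct-g}, i.e.\ essentially $X_0(y)\widetilde\alpha_\varphi(y)+\tfrac12\widetilde\beta_\varphi(y)$ up to the $\widetilde w_\varphi$-normalization)},
\end{equation*}
and using that its ratio $g_\varphi/g_\varphi'$ is what makes the two boundary pieces match, one lands on \eqref{eq:bvp_D_3/4}. In practice I would follow \cite{Ra-14} in absorbing the conformal gluing function $\widetilde w_\varphi$ into the definition of $g_\varphi$ so that the boundary relation takes the homogeneous shuffle form $\frac{g_\varphi(y)}{g_\varphi'(y)}D_\varphi(y)=\frac{g_\varphi(\bar y)}{g_\varphi'(\bar y)}D_\varphi(\bar y)$, which is exactly the input format for the resolution in Subsection~\ref{subs:solution of the boundary value problem}.

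The main obstacle I anticipate is the analytic continuation step, not the algebraic manipulation: one must argue carefully that the formula $D_\varphi(y)=\big(K_\varphi(X_0(y),0)L_\varphi(X_0(y),0)+\tfrac12 p_{1,1}f(1,1)\big)\big/\big(X_0(y)\widetilde\alpha_\varphi(y)+\tfrac12\widetilde\beta_\varphi(y)\big)$ genuinely extends $D_\varphi$ holomorphically past $\mathcal L_\varphi$ — this needs the conformal automorphism $X_0:\mathcal G_{\mathcal L_\varphi}\to\mathcal G_{\mathcal M_\varphi}$ recalled in Section~\ref{sec:Kernel functional equations}, the analyticity of $L_\varphi(\cdot,0)$ on $\mathcal G_{\mathcal M_\varphi}$, and a check that the denominator $X_0(y)\widetilde\alpha_\varphi(y)+\tfrac12\widetilde\beta_\varphi(y)$ does not vanish on $\mathcal G_{\mathcal L_\varphi}\setminus\{1\}$ (or that any vanishing is cancelled by the numerator). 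The continuity up to $\overline{\mathcal G_{\mathcal L_\varphi}}\setminus\{1\}$ and the behaviour at $y=1$ then follow from the boundedness estimates inherited from \cite[Sec.~2.1]{Ra-14}. Once continuation is in hand, deriving \eqref{eq:bvp_D_3/4} is a short conjugation-and-subtraction computation as sketched above.
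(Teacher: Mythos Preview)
Your overall strategy---kill the kernel, subtract to eliminate $L_\varphi(\cdot,0)$, then rewrite the prefactor via $g_\varphi$---matches the paper's. But the elimination step as you wrote it does not work, and the role of $g_\varphi$ is understated.

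\textbf{The elimination is mis-parametrized.} You claim that for $y\in\mathcal L_\varphi$ the two $x$-roots $X_0(y),X_1(y)$ are complex conjugate and then subtract the identities at $(X_0(y),y)$ and $(X_1(y),y)$. This is the wrong pair: for $y\in\mathcal L_\varphi$ the small root $X_0(y)$ is \emph{real}, lying in $[x_{\varphi,1},1]$ (recall $\mathcal L_\varphi=Y_0([x_{\varphi,1},1])\cup Y_1([x_{\varphi,1},1])$), and in general $X_1(y)\neq\overline{X_0(y)}$. So $K_\varphi(X_1(y),0)L_\varphi(X_1(y),0)$ is not the conjugate of $K_\varphi(X_0(y),0)L_\varphi(X_0(y),0)$, and your subtraction does not cancel the $L_\varphi$ term. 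The paper instead parametrizes by $x\in[x_{\varphi,1},1]$: letting $x$ approach the segment from above and below gives $Y_{\varphi,0}(x)=y$ and $Y_{\varphi,0}(x)=\bar y$ respectively, so one obtains two equations at the \emph{same} $x$ but at $y$ and $\bar y$. Subtracting then kills $K_\varphi(x,0)L_\varphi(x,0)$ outright and yields
\[
\bigl[X_0(y)\widetilde\alpha_\varphi(y)+\tfrac12\widetilde\beta_\varphi(y)\bigr]D_\varphi(y)-\bigl[X_0(\bar y)\widetilde\alpha_\varphi(\bar y)+\tfrac12\widetilde\beta_\varphi(\bar y)\bigr]D_\varphi(\bar y)=0,
\]
which by \eqref{eq:algebraic curve} is $\sqrt{\widetilde\delta_\varphi(y)}\,D_\varphi(y)-\sqrt{\widetilde\delta_\varphi(\bar y)}\,D_\varphi(\bar y)=0$.

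\textbf{Why $g_\varphi$ is needed.} You describe $g_\varphi$ as ``essentially $X_0(y)\widetilde\alpha_\varphi(y)+\tfrac12\widetilde\beta_\varphi(y)$ up to normalization'', but this misses the point. That quantity equals $-\tfrac12\sqrt{\widetilde\delta_\varphi(y)}$, which is \emph{not meromorphic} in $\mathcal G_{\mathcal L_\varphi}$ (it has a branch cut through $[y_{\varphi,1},1]$), so the boundary relation above is not yet in a form amenable to the Liouville-type argument of Subsection~\ref{subs:solution of the boundary value problem}. The function $g_\varphi/g_\varphi'$ is constructed precisely to be analytic in $\mathcal G_{\mathcal L_\varphi}$ with finite limits on $\mathcal L_\varphi$ and to satisfy $\frac{\sqrt{\widetilde\delta_\varphi(y)}}{\sqrt{\widetilde\delta_\varphi(\bar y)}}=\frac{g_\varphi(y)/g_\varphi'(y)}{g_\varphi(\bar y)/g_\varphi'(\bar y)}$ on $\mathcal L_\varphi$; this replacement is what turns the raw boundary condition into \eqref{eq:bvp_D_3/4}. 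Your sketch of the analytic continuation and the check that the denominator only vanishes at $y=1$ is fine and matches the paper.
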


\begin{rem}
The boundary value problem in Lemma~\ref{lem:bvp} is very close to the boundary value problem stated in \cite[Lem.\ 3]{Ra-14}.
\end{rem}

\begin{proof}
We first assume the analyticity and continuity of $D_{\varphi}(y)$ and  we begin to prove the boundary condition \eqref{eq:bvp_D_3/4}. We start to evaluate the functional equation \eqref{eq:functional_equation_octant} at $Y_{0,\varphi}(x)$ for $x$ close to $[x_{\varphi, 1},1)$:
\begin{equation}
\label{eq:fct-3/4-Y0}
0=-\left[x\widetilde{\alpha}_{\varphi}(Y_{\varphi,0}(x))+\frac{1}{2}\widetilde{\beta}_{\varphi}(Y_{\varphi,0}(x))\right]D_{\varphi}(Y_{\varphi,0}(x))
+K_{\varphi}(x,0)L_{\varphi}(x,0)+\frac{1}{2}p_{1,1}f(1,1).
\end{equation}
By letting $x$ go to any point of $[x_{\varphi, 1}, 1]$ with a positive (resp.\ negative) imaginary part, we obtain two new equations. We subtract these two equations and get:
\begin{equation*}
\left[X_{0}(y)\widetilde{\alpha}(y)+\frac{1}{2}\widetilde{\beta}(y)\right]D_{\varphi}(y)-\left[X_{0}(\bar{y})\widetilde{\alpha}(\bar{y})+\frac{1}{2}\widetilde{\beta}(\bar{y})\right]D_{\varphi}(\bar{y}), \quad y\in\mathcal L_{\varphi}\setminus \{1\}.
\end{equation*}
With \eqref{eq:algebraic curve} this last equation can be simplified as
\begin{equation}
\label{eq:bvp_inter}
\sqrt{\widetilde{\delta}_{\varphi}(y)}D_{\varphi}(y)-\sqrt{\widetilde{\delta}_{\varphi}(\bar{y})}D_{\varphi}(\bar{y})=0, \quad y\in\mathcal L_{\varphi}\setminus \{1\}.
\end{equation}
This boundary condition is the homogeneous equation of the boundary condition in \cite[Lem.\ 5]{RaTr-18}, with the same difficulty to deal with a non-meromorphic prefactor in $\mathcal G_{\mathcal L_{\varphi}}$, namely $\sqrt{\widetilde{\delta}_{\varphi}(y)}$ in \eqref{eq:bvp_inter}. 
The idea is to introduce a new function $F_{\varphi}$, analytic in $\mathcal{G}_{\mathcal{L}_\varphi}$ with finite limits on $\mathcal L_{\varphi}$, which satisfies the decoupling condition 
\begin{equation*}
\frac{\sqrt{\widetilde{\delta}_{\varphi}(y)}}{\sqrt{\widetilde{\delta}_{\varphi}(\bar{y})}}=\frac{F_{\varphi}(y)}{F_{\varphi}(\bar{y})}, \quad \forall y\in\mathcal L_{\varphi}\setminus \{1\},
\end{equation*}
which allow us to write the boundary condition \eqref{eq:bvp_inter} as 
\begin{equation*}
F_{\varphi}(y)D_{\varphi}(y)-F_{\varphi}(\bar{y})D_{\varphi}(\bar{y})=0.
\end{equation*}
Such a function $F_{\varphi}$ is defined in \cite[Sec~3.5]{RaTr-18}. It can be constructed from a conformal mapping $g_{\varphi}$ from $\mathcal G_{\mathcal L_{\varphi}}$ onto the unit disc, with the property that $g_{\varphi}(\bar{y})=\overline{g_{\varphi}(y)}$. We have $F_{\varphi}(y)=\frac{g_{\varphi}(y)}{g_{\varphi}'(y)}$ and one can choose $g_\varphi$ defined in $\mathcal G_{\mathcal L_{\varphi}}$ by 
\begin{equation}
\label{eq:fct-g}
g_\varphi(y)=h\left(\frac{2\widetilde{w}_{\varphi}\left(Y_{\varphi}(x_{\varphi, 1}) \right)}{\widetilde{w}_{\varphi}(y)}-1 \right),
\end{equation}
with $h(y)=-y+\sqrt{y^{2}-1}$ and $\widetilde{w}_{\varphi}$ given in Section~\ref{sub:ConformalGluingFunction}. Finally, the boundary condition \eqref{eq:bvp_inter} can be rewritten as \eqref{eq:bvp_D_3/4}

With the same reasoning as \cite[Sec.\ 3.3]{RaTr-18}, we can prove that $D_\varphi(y)$ is analytic in $\mathcal{G}_{\mathcal{L}_\varphi}$, has finite limits on $\mathcal{L}_\varphi\setminus \{1\}$ and then is continuous on $\overline{\mathcal{G}_{\mathcal L_{\varphi}}}\setminus \{1\}$. We start with proving the analytic continuation of $D_{\varphi}(y)$ from the unit disc $\mathcal{D}$ to $\mathcal{G}_{\mathcal{L}_{\varphi}}$. From the functional equation \eqref{eq:functional_equation_octant}, for ${y\in\{ y\in\mathbf{C}: |X_{\varphi,0}(y)|\leq 1\}\cap\mathcal{D}\}}$,
\begin{equation*}
2K_{\varphi}\left( X_{\varphi,0}(y),0\right) L_{\varphi}\left( X_{\varphi,0},0\right) + \sqrt{\widetilde{\delta}(y)}D_{\varphi}(y)+p_{1,1}f(1,1)=0.
\end{equation*}
The latter equation defined on $\mathcal{G}_{\mathcal{L}_\varphi}\cap\mathcal{D}$ can be continued in $\mathcal{G}_{\mathcal{L}_\varphi}\cup\mathcal{D}$. The generating function $D_\varphi(y)$ is analytic on $\mathcal{D}$, and on $\left( \mathcal{G}_{\mathcal{L}_\varphi}\cup\mathcal{D}\right)\setminus \mathcal{D}$, $D_\varphi(y)$ has the same singularities as $\sqrt{\widetilde{\delta}_\varphi(y)}$ and $X_{\varphi,0}(y)$, namely the branch cut $[y_1,y_4]$. However, this segment does not belong to $\left( \mathcal{G}_{\mathcal{L}_\varphi}\cup\mathcal{D}\backslash \mathcal{D}\right)$, then $D_{\varphi}(y)$ can be analytically continued to $\mathcal{G}_{\mathcal{L}_\varphi}$. Then we prove that $D_{\varphi}(y)$ has finite limits on $\mathcal{L}_\varphi\setminus\{ 1\}$. From equation \eqref{eq:fct-3/4-Y0}, we only need to study the zeros of $x\widetilde{\alpha}_{\varphi}\left(Y_{\varphi,0}(x)\right)+\frac{1}{2}\widetilde{\beta}_{\varphi}\left(Y_{\varphi,0} \right)$ for $x$ in $\left[ x_{\varphi,1},x_{\varphi,2}\right]$. Thanks to the relation $X_{\varphi,0}(Y_{\varphi,0}(x))=x$ valid in $\mathcal G_{\mathcal{M}_{\varphi}}$ (see \cite[Cor.~5.3.5]{FaIaMa-17}), it recurs to study the zeros of $\widetilde{\delta}_{\varphi}$ for $y$ in $\left(\mathcal{G}_{\mathcal{L}_{\varphi}}\cup \mathcal D\right) \setminus \mathcal{D}$. The discriminant $\widetilde{\delta}_{\varphi}$ vanishes at $y_{\varphi,1},1,y_{\varphi,4}$ and only $1$ belongs to the last set. Then $D_{\varphi}(y)$ has finite limits on  $\mathcal{L}_\varphi\setminus \{1\}$.

\end{proof}

\subsection{Solution of the boundary value problem}
\label{subs:solution of the boundary value problem}

In this section we solve the boundary value problem stated in Lemma~\ref{lem:bvp} and obtain in Theorem~\ref{thm:expression-D-3/4} an explicit expression for the generating function $D_{\varphi}(y)=\sum_{i\geq 1}f(i,i)y^{i-1}$. Finally, Theorem~\ref{thm:expression-H} gives a formal expression of the generating function $H(x,y)=\sum_{(i,j)\in \mathcal C}f(i,j)x^{i-1}y^{j-1}$. Along the proof Theorem~\ref{thm:expression-D-3/4}, we also state Lemma~\ref{lem:pol-harmfct} which gives an expression of $D_{\varphi}(y)$ for a family of harmonic function non necessarily positive.

\begin{thm}
\label{thm:expression-D-3/4}
The generating function $D_{\varphi}(y)$ can be written as
\begin{equation}
D_{\varphi}(y)=-\frac{f(1,1)}{\widetilde{w}_{\varphi}'(0)}\frac{\pi}{\theta_{\varphi}}\sqrt{-\frac{\widetilde{\delta}_{\varphi}''(1)}{2\widetilde{\delta}_{\varphi}(y)}}\sqrt{1-\widetilde{W}_{\varphi}(0)}\sqrt{\widetilde{W}_{\varphi}(y)},
\end{equation}
with $\theta_{\varphi}$, $\widetilde{w}_{\varphi}(y)$ and $\widetilde{W}_{\varphi}$ defined in Subsection \ref{sub:ConformalGluingFunction} and $\widetilde{\delta}_{\varphi}$ in \eqref{eq:noyauvarphi}.
\end{thm}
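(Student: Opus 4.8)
The strategy is to solve the boundary value problem from Lemma~\ref{lem:bvp} using the standard machinery for such homogeneous problems on a carved domain, following the template of \cite[Sec.~3.4]{RaTr-18} and \cite{Ra-14}. First I would reformulate the boundary condition \eqref{eq:bvp_D_3/4} in a way suited to conformal transplantation: the function
\[
\frac{g_\varphi(y)}{g_\varphi'(y)}D_\varphi(y)
\]
is analytic in $\mathcal G_{\mathcal L_\varphi}$, continuous up to $\mathcal L_\varphi\setminus\{1\}$, and takes conjugate values $F(y)=F(\bar y)$ on $\mathcal L_\varphi\setminus\{1\}$. By the gluing property of $\widetilde w_\varphi$ (it identifies $y$ and $\bar y$ on $\mathcal L_\varphi$ and is injective and analytic on $\mathcal G_{\mathcal L_\varphi}$), the composition $F\circ\widetilde w_\varphi^{-1}$ extends to a function analytic on $\widetilde w_\varphi(\mathcal G_{\mathcal L_\varphi})$ and across its boundary slit, hence on all of $\mathbb C$ minus possibly the image of the point $1$. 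In other words $F$ is a rational function of $\widetilde w_\varphi(y)$, and controlling its behaviour at $y=1$ together with the known growth/boundedness of $D_\varphi$ (bounded on the unit disc, with the singularity of $D_\varphi$ at the boundary governed by $\sqrt{\widetilde\delta_\varphi(y)}$, as established in the proof of Lemma~\ref{lem:bvp}) pins down $F$ to be a polynomial in $\widetilde w_\varphi(y)$ of controlled degree. This is the content of the promised Lemma~\ref{lem:pol-harmfct}: for a general (not necessarily positive) harmonic function one gets $D_\varphi(y)=\frac{g_\varphi'(y)}{g_\varphi(y)}\,P(\widetilde w_\varphi(y))$ for a polynomial $P$.

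Second, I would make the prefactor $g_\varphi'/g_\varphi$ explicit. From the definition \eqref{eq:fct-g}, $g_\varphi(y)=h\!\left(\tfrac{2\widetilde w_\varphi(Y_\varphi(x_{\varphi,1}))}{\widetilde w_\varphi(y)}-1\right)$ with $h(u)=-u+\sqrt{u^2-1}$; a direct logarithmic differentiation gives $\tfrac{g_\varphi'(y)}{g_\varphi(y)}$ as an algebraic expression in $\widetilde w_\varphi(y)$, $\widetilde w_\varphi'(y)$ and the constant $\widetilde w_\varphi(Y_\varphi(x_{\varphi,1}))$. Using the differential equation for $\widetilde w_\varphi$ from Remark~\ref{rem:CGF} (the analogue of \eqref{eq:diffeqw}, relating $\widetilde\delta_\varphi(y)\widetilde w_\varphi'(y)^2$ to a quadratic in $\widetilde w_\varphi(y)$), one can trade $\widetilde w_\varphi'(y)$ for $\sqrt{\widetilde\delta_\varphi(y)}$ and a square root of a polynomial in $\widetilde w_\varphi$; the point is that the combination $\tfrac{g_\varphi'(y)}{g_\varphi(y)}$ collapses, up to constants, to $\tfrac{1}{\sqrt{\widetilde\delta_\varphi(y)}}\cdot\tfrac{\text{(something)}}{\sqrt{\widetilde W_\varphi(y)(1-\widetilde W_\varphi(y))}}$ type of expression, and the factor $\sqrt{\widetilde W_\varphi(y)}$ in the final formula is exactly what survives after multiplying by the degree-one polynomial $P$. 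Here I would use $\widetilde w_\varphi(y)=\widetilde W_\varphi(y)-\widetilde W_\varphi(0)$ and the fact that $\widetilde W_\varphi$ vanishes at $y_{\varphi,1}$ and equals $1$ at $X_\varphi(y_{\varphi,1})$ — the analogues, via Remark~\ref{rem:CGF}, of the lemma proved for $W$.

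Third, to obtain the stated \emph{positive} harmonic function I would take $P$ of degree exactly one, so $D_\varphi(y)=\tfrac{g_\varphi'(y)}{g_\varphi(y)}\,(a\,\widetilde w_\varphi(y)+b)$, and determine $a,b$ and the overall normalization from two conditions: (i) $D_\varphi$ is analytic at $y=0$ with $D_\varphi(0)=f(1,1)$ (read off from \eqref{eq:fctgen-changed}), which forces a relation involving $\widetilde w_\varphi'(0)$ and produces the $-\tfrac{f(1,1)}{\widetilde w_\varphi'(0)}$ prefactor; and (ii) the constant term in $P$ is fixed so that no spurious pole is introduced at the zero of $\widetilde w_\varphi$ inside $\mathcal G_{\mathcal L_\varphi}$ — this kills the $1-\widetilde W_\varphi$ factor downstairs and leaves only $\sqrt{1-\widetilde W_\varphi(0)}$ as a constant, matching the displayed formula. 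Finally I would plug in $\tfrac{\pi}{\theta_\varphi}$ and $\sqrt{-\widetilde\delta_\varphi''(1)/2}$ coming from the differential equation for $\widetilde w_\varphi$ (cf.\ \eqref{eq:asymt-CGF}–\eqref{eq:diffeqw}), and verify that the resulting function indeed satisfies \eqref{eq:bvp_D_3/4}, is analytic on $\mathcal G_{\mathcal L_\varphi}\cup\mathcal D$, and has the right local behaviour at $1$ (the critical exponent $\pi/(2\theta_\varphi)$, consistent with Mustapha's asymptotics).

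\textbf{Main obstacle.} The delicate point is not the abstract solution of the BVP — that is routine once the boundary condition is in the form $F(y)=F(\bar y)$ — but rather the bookkeeping that turns $\tfrac{g_\varphi'(y)}{g_\varphi(y)}\,P(\widetilde w_\varphi(y))$ into the clean closed form $C'\sqrt{\widetilde W_\varphi(y)/\widetilde\delta_\varphi(y)}$: one must correctly track every square root (there are potential sign/branch ambiguities in $h$, in $\sqrt{\widetilde\delta_\varphi}$, and in the differential equation for $\widetilde w_\varphi$), check that the candidate has \emph{no} pole at the interior zero of $\widetilde w_\varphi$ (this is what fixes the degree-one $P$ uniquely and is the step most prone to error), and confirm that the normalization at $y=0$ is consistent with the harmonicity/positivity normalization $f(1,1)$. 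Verifying uniqueness of the positive solution — i.e.\ that higher-degree $P$ cannot yield a positive harmonic function — requires the growth estimate ruling out faster-than-critical growth, which I would borrow from the comparison with the Brownian harmonic function $u_{3\pi/2}$ discussed in Remark~\ref{rem:symm-cdt}.
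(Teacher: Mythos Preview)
Your proposal is correct and follows essentially the same route as the paper: invoke Lemma~\ref{lem:pol-harmfct} to write $\frac{g_\varphi}{g_\varphi'}D_\varphi=P(\widetilde w_\varphi)$, simplify $g_\varphi/g_\varphi'$ via the differential equation for $\widetilde w_\varphi$ (the analogue of \eqref{eq:diffeqw}), take $P$ of degree one, and fix the two constants from the behaviour at $y=0$ (the paper uses $g_\varphi(0)/g_\varphi'(0)=0$ to force $\nu=0$ and then matches $D_\varphi(0)=f(1,1)$ to get $\mu=f(1,1)/\widetilde w_\varphi'(0)$). The only minor divergence is in justifying why $P$ has degree exactly one: the paper cites Mustapha's asymptotic exponent \cite[Eq.~(1.4)]{Mu-19} together with uniqueness of the positive harmonic function, whereas you propose a growth comparison with the Brownian cone function from Remark~\ref{rem:symm-cdt}; both arguments import the same critical exponent $\pi/(2\pi-\theta)$, so this is a difference of citation rather than of method.
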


\begin{thm}
\label{thm:expression-H}
Let $f$ be a harmonic function associated to a random walk in the three-quarter plane with hypotheses \ref{H1}, \ref{H2}, \ref{H3} and \ref{H4}. The generating function $H(x,y)$ of $f$ can be formally written as the finite sum of convergent generating functions (see~\eqref{eq:equation_cut3parts}) 
\begin{multline}
\label{eq:expression-H}
H(x,y)=-\frac{f(1,1)}{\widetilde{w}_\varphi'(0)}\frac{\pi}{\theta_\varphi}\sqrt{1-\widetilde{W}_\varphi(0)}\sqrt{-\frac{\widetilde{\delta}_\varphi''(1)}{2}}
\left[ \frac{1}{K(x,y)}\left(\frac{\sqrt{1-W_\varphi(y^{-1})}+\sqrt{1-W_\varphi(x^{-1})}}{2}
\right.\right.
\\
\left.\left.
-\frac{\sqrt{\widetilde{W}_\varphi(xy)}}{\sqrt{\widetilde{\delta}_\varphi(xy)}}\left(\left(x^{-1}+y^{-1}\right)\widetilde{\alpha}_\varphi(xy)+\widetilde{\beta}_\varphi(xy)
\right)\right)
+\frac{\sqrt{\widetilde{W}_\varphi(xy)}}{\sqrt{\widetilde{\delta}(xy)}}  \right].
\end{multline}
\end{thm}
\begin{rem}
In equation~\eqref{eq:expression-H}, up to the multiplicative constant $-\frac{f(1,1)}{\widetilde{w}_\varphi'(0)}\frac{\pi}{\theta_\varphi}\sqrt{1-\widetilde{W}_\varphi(0)}\sqrt{-\frac{\widetilde{\delta}_\varphi''(1)}{2}}$, the terms 
\begin{multline*}
\frac{1}{K(x,y)}\left[ \frac{-\left(y^{-1}\widetilde{\alpha}_\varphi(xy)+\frac{1}{2}\widetilde{\beta}_\varphi(xy)\right)}{\sqrt{\widetilde{\delta}_\varphi(xy)}}\sqrt{\widetilde{W}_\varphi(xy)}+\frac{\sqrt{1-W_\varphi(y^{-1})}}{2} \right],
\quad \frac{\sqrt{\widetilde{W}_{\varphi}(xy)}}{\sqrt{\widetilde{\delta}_{\varphi}(xy)}},
\\ 
\text{and} \quad 
\frac{1}{K(x,y)}\left[ \frac{-\left(x^{-1}\widetilde{\alpha}_\varphi(xy)+\frac{1}{2}\widetilde{\beta}_\varphi(xy)\right)}{\sqrt{\widetilde{\delta}_\varphi(xy)}}\sqrt{\widetilde{W}_\varphi(xy)}+\frac{\sqrt{1-W_\varphi(x^{-1})}}{2} \right],
\end{multline*}
contribute respectively for the generating functions $L(x,y)$ in the lower cone $\{ i\geq 1, j\leq i-1\}$,  the diagonal $D(x,y)$ and the generating function $U(x,y)$ in the upper cone $\{ j\geq 1, i\leq j-1\}$. 
\end{rem}

\begin{lem} 
\label{lem:pol-harmfct}
Let $g_{\varphi}$ and $D_{\varphi}$ be as in Lemma~\ref{lem:bvp}.
There exists a polynomial $P\in\mathbb{R} [y]$ such that  
\begin{equation}
\label{eq:pol-harmfct}
\frac{g_\varphi(y)}{g_\varphi'(y)}D_{\varphi}(y)= P(\widetilde{w}_{\varphi}(y)).
\end{equation}
\end{lem}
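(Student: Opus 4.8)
The plan is to recognize that Lemma~\ref{lem:bvp} presents us with a homogeneous Riemann--Hilbert boundary value problem for the unknown $D_\varphi$, namely that $\frac{g_\varphi(y)}{g_\varphi'(y)}D_\varphi(y)$ takes conjugate boundary values on $\mathcal L_\varphi$, and that the standard route — as in \cite[Lem.~6]{Ra-14} — is to compose with the conformal gluing function $\widetilde w_\varphi$ so as to transfer the problem to the complex plane cut along a segment, where it becomes a problem solvable by Liouville-type arguments. Concretely, I would first set $F_\varphi(y) := \frac{g_\varphi(y)}{g_\varphi'(y)}D_\varphi(y)$, note by Lemma~\ref{lem:bvp} that $F_\varphi$ is analytic in $\mathcal G_{\mathcal L_\varphi}$, continuous on $\overline{\mathcal G_{\mathcal L_\varphi}}\setminus\{1\}$ (here I use that $g_\varphi/g_\varphi'$ is analytic with finite limits on $\mathcal L_\varphi$, established in the proof of Lemma~\ref{lem:bvp}), and satisfies $F_\varphi(y) = F_\varphi(\bar y)$ for $y\in\mathcal L_\varphi\setminus\{1\}$.

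Next, since $\widetilde w_\varphi$ is a conformal gluing function for $\mathcal G_{\mathcal L_\varphi}$ in the sense of Definition~\ref{defn:CGF}, it is injective and analytic on $\mathcal G_{\mathcal L_\varphi}$, sends $\mathcal L_\varphi\setminus\{1\}$ to a segment (gluing conjugate boundary points together because $\widetilde w_\varphi(y)=\widetilde w_\varphi(\bar y)$ there), and takes the value $\infty$ at $1$. Therefore the map $\Phi := F_\varphi\circ\widetilde w_\varphi^{-1}$ is well-defined and analytic on $\widetilde w_\varphi(\mathcal G_{\mathcal L_\varphi})$, which is the complement of a segment $[\widetilde w_\varphi(y_{\varphi,1}), \widetilde w_\varphi(Y_\varphi(x_{\varphi,1}))]$ (or similar), and the boundary condition $F_\varphi(y)=F_\varphi(\bar y)$ says precisely that $\Phi$ takes equal values on the two sides of that segment. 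Hence $\Phi$ extends to an entire function. To conclude that $\Phi$ is a polynomial, I would invoke a growth estimate near $1$: from the asymptotics \eqref{eq:asymt-CGF} of $W_\varphi$ (and the analogue for $\widetilde w_\varphi$), $\widetilde w_\varphi(y)\sim c(1-y)^{-\pi/\theta_\varphi}$ as $y\to 1$, while $D_\varphi(y)$ — being, up to the kernel factors, a section of the quadrant-type generating function bounded on the unit disc and controlled near $1$ by the standard FIM/FM estimates — grows at most polynomially in $(1-y)^{-1}$; and $g_\varphi/g_\varphi'$ is bounded near $1$. Combining these, $F_\varphi(y)$ is $O\big((1-y)^{-N}\big)$ for some $N$, hence $\Phi(z)$ is $O(|z|^{M})$ at infinity for some $M$, so by the generalized Liouville theorem $\Phi$ is a polynomial $P\in\mathbb R[y]$ (real coefficients because of the conjugation symmetry of both $F_\varphi$ and $\widetilde w_\varphi$ on the real segment $[y_{\varphi,1},1]$), giving $F_\varphi(y)=P(\widetilde w_\varphi(y))$, which is \eqref{eq:pol-harmfct}.

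The main obstacle, and the step requiring the most care, is the growth bound on $D_\varphi$ near $y=1$ that is needed to rule out an essential singularity of $\Phi$ at infinity and to pin the entire function down to a polynomial. One must argue that $D_\varphi(y)=\sum_{i\ge1}f(i,i)y^{i-1}$, which a priori is only known to be analytic on the unit disc, has controlled behavior as $y\to 1$ along $\mathcal G_{\mathcal L_\varphi}$; this follows from the analytic continuation already carried out in the proof of Lemma~\ref{lem:bvp} together with the fact that near $1$ the continued expression is governed by $\sqrt{\widetilde\delta_\varphi(y)}$ and $X_{\varphi,0}(y)$ (which have only algebraic singularities) and by $L_\varphi(X_{\varphi,0}(y),0)$, itself a quadrant boundary section whose singularity at $1$ is of the standard algebraic type (cf.\ \cite{Ra-14,FaIaMa-17}). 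A secondary point is verifying that $g_\varphi/g_\varphi'$ genuinely stays bounded as $y\to1$: this is where the specific choice of $g_\varphi$ in \eqref{eq:fct-g} — built from $h(y)=-y+\sqrt{y^2-1}$ composed with $\widetilde w_\varphi$, which blows up at $1$ — is used, exactly as in \cite[Sec.~3.4]{RaTr-18}, so I would simply cite that analysis rather than redo it.
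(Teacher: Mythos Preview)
Your argument is correct and follows the classical route of \cite[Lem.~6]{Ra-14}: push $F_\varphi=\frac{g_\varphi}{g_\varphi'}D_\varphi$ through $\widetilde w_\varphi$ to obtain an entire function, then control its growth at infinity and invoke the generalized Liouville theorem. The paper, however, takes a structurally different path. Rather than bounding $F_\varphi$ from above, it passes to the \emph{reciprocal} $\Delta_\varphi=1/F_\varphi$: after composition with $\widetilde w_\varphi^{-1}$ one obtains a function holomorphic on $\mathbb C$ away from the finite set $Z_\varphi$ of zeros of $F_\varphi\circ\widetilde w_\varphi^{-1}$ on the slit (finite by the isolated-zeros principle), with no jump across the slit; subtracting the principal parts at $Z_\varphi$ and applying ordinary Liouville gives a constant, whence $F_\varphi\circ\widetilde w_\varphi^{-1}$ is a polynomial of degree $|Z_\varphi|$.

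The trade-off is exactly where you put your finger. Your route requires an \emph{a priori} bound $D_\varphi(y)=O\big((1-y)^{-N}\big)$ as $y\to1$, which you justify by invoking the algebraic nature of the singularity of $L_\varphi(x,0)$ at $x=1$ from \cite{Ra-14,FaIaMa-17}; this is legitimate but is an external input not reproved in the present paper. The paper's reciprocal trick trades this for the requirement that $\Delta_\varphi\circ\widetilde w_\varphi^{-1}$ remain bounded near $z=\infty$, i.e.\ that $F_\varphi(y)$ be bounded away from $0$ as $y\to1$ --- a condition the paper asserts without further comment. In practice both hypotheses rest on the same analytic continuation established in Lemma~\ref{lem:bvp}. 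Your direct approach has the small bonus that an upper bound on $\deg P$ can be read off from the growth exponent of $D_\varphi$; the paper's approach instead identifies $\deg P$ with the number of zeros on the slit.
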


\begin{rem}
More generally, for any polynomial $P$ of degree $n$, we get a harmonic functions satisfying \ref{P1} and \ref{P2} (but not necessarily \ref{P3}). Remark \ref{rem:pol+neg} illustrates this statement. 
\end{rem}

\begin{proof}[Proof of Lemma~\ref{lem:pol-harmfct}.]
The conformal gluing function $\widetilde{w}_\varphi(y)$ defined in Subsection \ref{sub:ConformalGluingFunction} transforms the domain $\mathcal{G}_{\mathcal{L}_\varphi}$ into the complex plane cut by the segment $I_\varphi=[\widetilde{w}_\varphi(Y_\varphi(x_{\varphi,1})),1)$. Let $\widetilde{w}_\varphi^{-1}$ be the inverse function of $\widetilde{w}_\varphi$. We denote by $\widetilde{w}_\varphi^{-1}(z)^+$ and $\widetilde{w}_\varphi^{-1}(z)^-$ the left and right limits of $\widetilde{w}_\varphi^{-1}$ on $I_\varphi$. The latter are complex conjugate on $I$. Let
\begin{equation*}
\Delta_\varphi(y)=\frac{1}{\frac{g_\varphi(y)}{g_\varphi'(y)}D_\varphi(y)} 
\qquad\text{and}\qquad 
Z_\varphi=\left\{ z\in I_\varphi : \frac{g_\varphi}{g_\varphi'}D_\varphi\circ \widetilde{w}^{-1}(z)=0 \right\}.
\end{equation*}
The set $Z_\varphi$ is finite, otherwise by the principle of isolated zeros, $\frac{g_\varphi(y)}{g_\varphi'(y)}D_\varphi(y)$ would be equal to zero on the whole of $\overline{\mathcal{G}_{\mathcal{L}_\varphi}}$. Let $N_\varphi$ be the cardinal of $Z_\varphi$. With Lemma~\ref{lem:bvp}, we have
\begin{equation*}
\left\{
\begin{array}{l}
\Delta_\varphi\circ \widetilde{w}^{-1}(z)^+-\Delta_\varphi\circ \widetilde{w}^{-1}(z)^-=0 \quad \forall z\in I_\varphi, \\
\Delta_\varphi\circ \widetilde{w}^{-1} \quad \text{ is analytic on } \mathbb{C}\setminus I_\varphi,
\end{array}
\right.
\end{equation*}
which implies that $\Delta_\varphi\circ \widetilde{w}^{-1}$ is holomorphic in $\mathbb{C}$. Furthermore, as $\Delta_\varphi\circ \widetilde{w}_\varphi^{-1}(z)-\displaystyle\sum_{z_0\in Z_\varphi}\frac{1}{(z-z_0)}$ is bounded in $\mathbb{C}$, then by Liouville's theorem, the latter is constant in the whole complex plane. In addition, there exists a polynomial $P$ of degree $N_\varphi$ such that $\left(\frac{g_\varphi}{g_\varphi'}D_\varphi\right)\circ \widetilde{w}_\varphi^{-1}(z)=P(z)$, hence \eqref{eq:pol-harmfct}.
\end{proof}

\begin{proof}[Proof of Theorem~\ref{thm:expression-D-3/4}.]
In particular take $P$ of degree one in \eqref{eq:pol-harmfct}, the  generating function $D_{\varphi}(y)$ can be written as
\begin{equation}
\label{eq:D+const}
D_{\varphi}(y)=\mu \frac{\widetilde{w}_{\varphi}(y)+\nu}{g_\varphi(y)/g_\varphi'(y)}.
\end{equation}
The expression of $\frac{g_\varphi(y)}{g_\varphi'(y)}$ where the function $g$ is defined in \eqref{eq:fct-g} can be simplified into $\frac{g_\varphi(y)}{g_\varphi'(y)}=\frac{\widetilde{w}_{\varphi}(y)\sqrt{\widetilde{w}_{\varphi}(Y_{\varphi}(x_{\varphi,1}))-\widetilde{w}_{\varphi}(y)}}{\widetilde{w}_{\varphi}'(y)\sqrt{\widetilde{w}_{\varphi}(Y_{\varphi}(x_{\varphi,1}))}}$. Furthermore, as the function $w$ is decreasing on $(X(y_{1}),1)$, taking the square root of \eqref{eq:diffeqw} gives
\begin{equation*}
\sqrt{\delta(x)}w'(x)=-\frac{\pi}{\theta}\sqrt{-\frac{\delta''(1)}{2}}\sqrt{w(Y(x_{1}))-w(y)}\sqrt{w(y)+W(0)}.
\end{equation*}
Then, thanks to equation \eqref{eq:asymt-CGF}, there exists a $c_g \neq 0$ such that for $y$ in the neighborhood of 1,
\begin{equation*}
\frac{g_\varphi(y)}{g_\varphi'(y)}=-\frac{\widetilde{w}_\varphi(y)}{\widetilde{w}_\varphi\left(Y_\varphi(x_{\varphi,1}) \right)}\frac{\theta_\varphi}{\pi}\frac{\sqrt{\widetilde{d}_\varphi(y)}}{\sqrt{-\frac{\widetilde{d}_\varphi''(1)}{2}}\sqrt{\widetilde{w}_\varphi(y)+\widetilde{W}_\varphi(0)}}=\frac{c_g+o(1)}{(1-y)^{\pi/(2\theta_\varphi)-1}}.
\end{equation*}
and finally, there exists a $c'_{D}\neq 0$ such that
\begin{equation}
\label{eq:asymp-Diag}
D_{\varphi}(y)= \frac{c'_{D}+o(1)}{(1-y)^{\pi/\theta_{\varphi}-\pi/(2\theta_{\varphi})+1}}=\frac{c'_{D}+o(1)}{(1-y)^{\pi/(2\theta_{\varphi})+1}}=\frac{c'_{D}+o(1)}{(1-y)^{\pi/(2\pi-\theta)+1}}.
\end{equation}
The asymptotics of $D_{\varphi}(y)$ in the last equation \eqref{eq:asymp-Diag} is exactly the asymptotics found in \cite[Eq.~(1.4)]{Mu-19}, see Remark \ref{rem:art-Mustapha}. By unicity of the positive harmonic function, $P$ is a polynomial of degree one.

Going back to \eqref{eq:D+const} and noticing that $g_\varphi(0)/g_\varphi'(0)=0$, we deduce $\nu=-\widetilde{w}_{\varphi}(0)=0$. Then, computing the first term in the expansion of \eqref{eq:D+const} at $y=0$, we find $\mu=\frac{f(1,1)}{\widetilde{w}_{\varphi}'(0)}$.
\end{proof}

\begin{rem}
\label{rem:art-Mustapha}
In \cite{Mu-19}, Mustapha computes the asymptotic of the number of small steps walks excursions in the three-quarter plane.  In particular, his equation (1.4) shows that the polynomial exponent $\alpha_{\mathcal C}$ in the three quadrants can be easily expressed as a function of the critical exponent $\alpha_{\mathcal Q}$ in the quadrant. From this equation we have for $y$ close to $1$, a constant $c_H\neq 0$ such that $H(x,0)=\frac{c_H+o(1)}{(1-x)^{\pi/(2\pi-\theta)}}$, and we deduce that there exists $c_D\neq 0$ such that $D_\varphi(y)=\frac{c_D+o(1)}{(1-y)^{\pi/(2\pi-\theta)+1}}$.  Let us point out that the positivity of the harmonic function (property \ref{P3}) is crucial in the proof of asymptotic results in \cite{Mu-19}.  
\end{rem}

%\begin{proof}
%From \eqref{eq:D+const} and the computation of the constants that follows, we can easily write
%\begin{equation*}
%D_{\varphi}(y)=\frac{f(1,1)}{\widetilde{w}_{\varphi}'(0)}\frac{\widetilde{w}_{\varphi}'(y)\sqrt{\widetilde{w}_{\varphi}(Y_{\varphi}(x_{\varphi,1}))}}{\sqrt{\widetilde{w}_{\varphi}(Y_{\varphi}(x_{\varphi,1}))-\widetilde{w}_{\varphi}(y)}}.
%\end{equation*}
%Using \eqref{eq:w}, \eqref{eq:diffeqw} and Remark \ref{rem:CGF}, the result is then straightforward. 
%\end{proof}

\begin{proof}[Proof of Theorem~\ref{thm:expression-H}.]
 The expression of $D_\varphi(y)$ in Theorem~\ref{thm:expression-D-3/4} suffices to calculate $H(x,y)$. Indeed, to have an expression of $H(x,y)$, we only need to have one of $D(x,y)$ and $L(x,y)$ (thanks to the symmetry of the problem we have $U(x,y)=L(y,x)$). We begin by plugging in $Y_{\varphi,0}(x)$ to the functional equation \eqref{eq:functional_equation_octant} and get
\begin{align}
\label{eq:expression-Lx-3/4}
\nonumber K_\varphi(x,0)L_\varphi(x,0)&=-\frac{1}{2}\left(\sqrt{\widetilde{\delta}_\varphi(Y_{\varphi,0}(x))}D_\varphi(Y_{\varphi,0}(x))+p_{1,1}f(1,1)\right)\\
&=-\frac{f(1,1)}{2}\left(\frac{1}{\widetilde{w}_{\varphi}'(0)}\frac{\pi}{\theta_{\varphi}}\sqrt{-\frac{\widetilde{\delta}_{\varphi}''(1)}{2}}\sqrt{1-\widetilde{W}_\varphi(0)}\sqrt{1-W_\varphi(x)}+p_{1,1}\right).
\end{align}
Together with \eqref{eq:functional_equation_octant}, we can write an expression of $L_{\varphi}(x,y)$ and thanks to the symmetry, we get for free an expression of $U_\varphi(x,y)=L_\varphi(x^{-1},y)$, and with a series expansion of $L_\varphi(x,y)$, $U_{\varphi}(x,y)$ and $D_\varphi(y)$, the value of $f(i,j)$ for all $(i,j)\in\mathcal{C}$. Nevertheless, if we still want to write a formal expression of the generating function $H(x,y)$ defined in \eqref{eq:generating_function_3/4}, we can use the following relations
\begin{equation*}
K_\varphi\left(y^{-1},0\right)L_\varphi(y^{-1},0)=K(0,y)L_{0,-}(y^{-1})
\qquad \text{and} \qquad
D_{\varphi}(xy)=D(x,y),
\end{equation*}
and from them we can write
\begin{multline}
K(x,y)L(x,y)=-\frac{f(1,1)}{\widetilde{w}_\varphi'(0)}\frac{\pi}{\theta_\varphi}\sqrt{1-\widetilde{W}_\varphi(0)}\sqrt{-\frac{\widetilde{\delta}_\varphi''(1)}{2}}
\\
\left[ \frac{-\left(y^{-1}\widetilde{\alpha}_\varphi(xy)+\frac{1}{2}\widetilde{\beta}_\varphi(xy)\right)}{\sqrt{\widetilde{\delta}_\varphi(xy)}}\sqrt{\widetilde{W}_\varphi(xy)}+\frac{\sqrt{1-W_\varphi(y^{-1})}}{2} \right].
\end{multline}
We have now all the ingredients to write the following theorem which gives an expression of $H(x,y)$.
\end{proof}

%\begin{lem}
%\label{lem:Pol-dg1}
%The polynomial $P$ is of degree $1$.
%\end{lem}
%
%\begin{proof}
%Consider \eqref{eq:pol-harmfct} and let $y$ be in the neighborhood of 1. Remember \eqref{eq:asymt-CGF} and with $n$ be the degree of $P$, there exists $c_P\neq 0$ such that we have 
%\begin{equation*}
%P\left( \widetilde{w}_{\varphi}(y)\right)=\frac{c_P+o(1)}{(1-y)^{n\pi/\theta_{\varphi}}}.
%\end{equation*}
%On the other side, with the expression of $\frac{g_\varphi(y)}{g_\varphi'(y)}$ in Lemma \ref{lem:bvp} and \eqref{eq:diffeqw}, there exists a $c_g \neq 0$ such that, 
%\begin{equation*}
%\frac{g_\varphi(y)}{g_\varphi'(y)}=-\frac{\widetilde{w}_\varphi(y)}{\widetilde{w}_\varphi\left(Y_\varphi(x_{\varphi,1}) \right)}\frac{\theta_\varphi}{\pi}\frac{\sqrt{\widetilde{d}_\varphi(y)}}{\sqrt{-\frac{\widetilde{d}_\varphi''(1)}{2}}\sqrt{\widetilde{w}_\varphi(y)+\widetilde{W}_\varphi(0)}}=\frac{c_g+o(1)}{(1-y)^{\pi/(2\theta_\varphi)-1}}.
%\end{equation*}
%In \cite{Mu-19} states asymptotic results on walks in the three-quarter plane. Let us point out that the positivity of the harmonic function (property \ref{P3}) is crucial in the proof of those asymptotic results. In particular, Equation (1.4) states that for $y$ close to $1$, there exists $c_H\neq 0$ such that $H(x,0)=\frac{c_H+o(1)}{(1-x)^{\pi/(2\pi-\theta)}}$, then we get that there exists $c_D\neq 0$ such that $D_\varphi(y)=\frac{c_D+o(1)}{(1-y)^{\pi/(2\pi-\theta)+1}}$. With \eqref{eq:angle-3/4} and \eqref{eq:pol-harmfct} we deduced that $n=1$, therefore $P$ is a polynomial of degree $1$.
%\end{proof}

\subsection{Example of the simple random walk}
\label{subs:example SRW}

In this subsection, we apply the results of Subsection \ref{subs:solution of the boundary value problem} 
%Theorem \ref{thm:expression-D-3/4} and Theorem \ref{thm:expression-H} 
to the  simple random walk. The change of variable $\varphi$ defined in \eqref{eq:change_var_phi} transforms the simple random walk into the Gessel random walk (see Figure~\ref{fig:SW&GS}). We first need to compute the angle $\theta_\varphi$, the conformal gluing functions $W_\varphi$, $\widetilde{W}_\varphi$ and $\widetilde{w}_\varphi$. Then we get an expression of $D_\varphi(y)$ and $L_\varphi(x,0)$ and end up with a series expansion of $H(x,y)$.

The simple walk is defined by $p_{1,0}=p_{-1,0}=p_{0,-1}=p_{0,1}=\frac{1}{4}$. We easily have $\theta=\frac{\pi}{2}$ and together with \eqref{eq:angle-3/4}, we deduce $\theta_\varphi=\frac{3\pi}{4}$. From Equations \eqref{eq:Kernel} and \eqref{eq:fctgen-changed}, the kernel after the change of variable is defined by
\begin{equation}
K_\varphi(x,y)=xy\left( \frac{1}{4}x+\frac{1}{4}x^{-1}+\frac{1}{4}xy+\frac{1}{4}x^{-1}y^{-1}-1\right),
\end{equation}
\begin{equation}
\label{coeffnoyau-ex}
\left\{
\begin{array}{l l l}
\alpha_\varphi (x) =\frac{x^2}{4}; & \beta_\varphi (x) = \frac{1}{4}x^2-x+\frac{1}{4};& \gamma_\varphi (x) = \frac{1}{4};\\
\widetilde{\alpha}_\varphi (y) =\frac{1}{4}y(y+1); & \widetilde{\beta}_\varphi (y) = -y; & \widetilde{\gamma}_\varphi (y) = \frac{1}{4}(1+y).
\end{array}
\right.
\end{equation}

\begin{figure}[t]
\centering
\begin{tikzpicture}
\begin{scope}[scale=1, xshift=0 cm, yshift=0 cm]
\draw[white, fill=gray!10] (-2,-2) -- (-2,2) -- (2,2) -- (2,-2);
\draw[white, thick] (-2,-2) grid (2,2);
\draw[dblue!90, thick, ->] (0,0)--(1,0) node[above right]{$1/4$};
\draw[dblue!90, thick, ->] (0,0)--(-1,0) node[above left]{$1/4$};
\draw[dblue!90, thick, ->] (0,0)--(0,-1) node[above right]{$1/4$};
\draw[dblue!90, thick, ->] (0,0)--(0,1) node[above left]{$1/4$};
\end{scope}
\begin{scope}[scale=1, xshift=7 cm, yshift=0 cm]
\draw[white, fill=gray!10] (-2,-2) -- (-2,2) -- (2,2) -- (2,-2);
\draw[white, thick] (-2,-2) grid (2,2);
\draw[dblue!90, thick, ->] (0,0)--(1,0) node[above right]{$1/4$};
\draw[dblue!90, thick, ->] (0,0)--(-1,0) node[above left]{$1/4$};
\draw[dblue!90, thick, ->] (0,0)--(-1,-1) node[below left]{$1/4$};
\draw[dblue!90, thick, ->] (0,0)--(1,1) node[above right]{$1/4$};
\end{scope}
\end{tikzpicture}
\caption{Simple random walk (left) and Gessel random walk (right).}
\label{fig:SW&GS}
\end{figure}
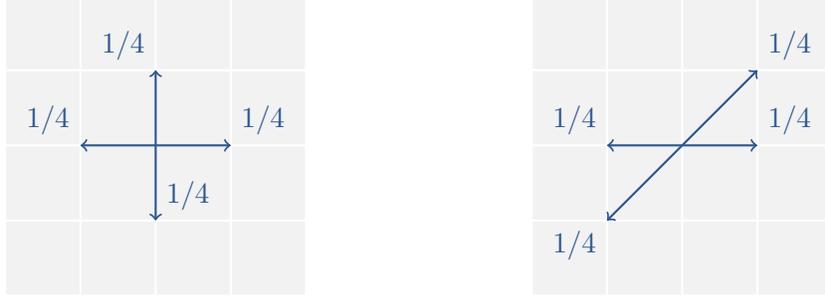 

\paragraph{Conformal gluing functions.}
On one side, we have

\begin{equation*}
\left\{
\begin{array}{l c l}
\delta_\varphi(x)&=&\frac{(x^2-6x+1)(x-1)^2}{16},\\
x_4&=&3+2\sqrt{2},\\
\delta_\varphi'(x_4)&=&3\sqrt{2}+4,\\
\delta_\varphi''(x_4)&=&\frac{11}{2}+3\sqrt{2},\\
\delta_\varphi''(1)&=&-1/2,\\
\end{array}
\right.
\end{equation*}
we have then $T_\varphi(x)=\displaystyle\sqrt{\frac{3+2\sqrt{2}-x}{1-x}}\frac{\sqrt{2}}{2\sqrt{2+\sqrt{2}}}$, and finally 
\begin{multline}
W_\varphi(x)=\frac{1}{4}-\frac{(\sqrt{2}-1)^{4/3}}{16}
\\
\frac{\left(\sqrt{3+2\sqrt{2}-x}+\sqrt{x(3+2\sqrt{2})-1} \right)^{8/3}+\left(\sqrt{3+2\sqrt{2}-x}-\sqrt{x(3+2\sqrt{2})-1} \right)^{8/3}}{\left( 1-x \right) ^{4/3} }.
\end{multline}

On the other side, 
\begin{equation*}
\left\{
\begin{array}{l c l}
\widetilde{\delta}_\varphi(y)&=&-\frac{y(y-1)^2}{4},\\
y_4&=&\infty,\\
\widetilde{\delta}_\varphi''(0)&=&1,\\
\widetilde{\delta}_\varphi'''(0)&=&-3/2,\\
\widetilde{\delta}_\varphi''(1)&=&-1/2,\\
\end{array}
\right.
\end{equation*}
we have then $\widetilde{T}_\varphi(y)=\displaystyle\frac{1}{\sqrt{1-y}}$, and finally
\begin{align}
\widetilde{W}_\varphi(y)&=-\,{\frac { \left( y-2\,\sqrt {y}+1 \right)  \left( 1-\sqrt {y}
 \right) ^{2/3}+ \left( 2\,y-2 \right) \sqrt [3]{1-y}+ \left( \sqrt {
y}+1 \right) ^{2/3} \left( y+2\,\sqrt {y}+1 \right) }{ 4\left( 1-y
 \right) ^{4/3}}}\\
 \widetilde{w}_\varphi(y)&=\widetilde{W}_\varphi(y).
\end{align}

\paragraph{Series expansion of $D_\varphi(y)$ and $L_\varphi(x,0)$.}

Noticing that $\widetilde{w}_\varphi(0)=-16/9$, $\widetilde{W}_\varphi(0)=0$, thanks to Theorem~\ref{thm:expression-D-3/4}, we have 
\begin{align}
D_\varphi(y)&=f(1,1)\frac{3}{8}\frac{\sqrt{\widetilde{W}_\varphi(y)}}{\sqrt{\widetilde{\delta}_\varphi(y)}}
=f(1,1)\left(
1+{\frac{44}{27}}y+{\frac{523}{243}}{y}^{2}+{\frac{17168}{6561}}{y}^{
3}+O
\left( {y}^{4} \right) \right).
\end{align}
Thanks to \eqref{eq:expression-Lx-3/4} with $K_\varphi(x,0)=1/4$, we have 
\begin{align}
L_\varphi(x,0)&=f(1,1)\frac{3}{4}\sqrt{1-W_\varphi(x)}\\\nonumber
&=f(1,1)\left({\frac {3\,\sqrt {3}}{8}}+{\frac{1}{2}}x+ \left( 1-{\frac {\sqrt {3}
}{3}} \right) {x}^{2}+ \left( -{\frac {4\,\sqrt {3}}{3}}+{\frac{145}{
54}} \right) {x}^{3}+O \left( {x}^{4} \right) \right).
\end{align}

\paragraph{Series expansion of $L_\varphi(x,y)$.}

To begin with, we write an expression of $K_\varphi(x,y)L_\varphi(x,y)$.  With \eqref{eq:functional_equation_octant}, we get
\begin{align}
L_\varphi(x,y)&=\frac{1}{K_\varphi(x,y)}\left( -\left[x \frac{1}{4}y(y+1)-\frac{1}{2}y \right]D_\varphi(y)+K_\varphi(x,0)L_\varphi(x,0) \right)\\\nonumber
&=f(1,1)\left[\frac{3\sqrt {3}}{8}+\left(2-\frac{3\sqrt {3}}{8}\right)y+\left(\frac{34}{27}+\frac{3\sqrt {3}}{8}\right)y^2+O \left( {y}^{3} \right) \right.\\\nonumber
&+\left( \frac{1}{2}+\left(-\frac{3}{2}+\frac{3\sqrt {3}}{2}\right)y+\left({\frac{371}{54}}-3\,\sqrt {3} \right) y^2+O \left( {y}^{3} \right)\right)x \\\nonumber
&\left. +\left(1-\frac{\sqrt {3}}{3}+\left( 1-\frac{\sqrt{3}}{24}\right)y+\left({\frac {145\,\sqrt {3}}{24}}-9 \right)y^2 +O \left( {y}^{3} \right) \right)x^2+O \left( {x}^{3} \right)\right].
\end{align}

\begin{rem}
\label{rem:pol+neg}
The harmonic function $f(i,j)=ij$ is a positive harmonic function of the simple walk in the quarter plane. In the three-quarter plane, this function is still harmonic, but does not satisfy \ref{P3}. However, there exists a polynomial $P$ as in \eqref{eq:pol-harmfct} such that $D_\varphi(y)=\sum_{i\geq 1}f(i,i)y^{i-1}=\frac{1+y}{(1-y)^3}$. A quick study of exponent in the same idea of Section~\ref{subs:solution of the boundary value problem} shows that $P$ should be of degree $2$. Then there exists $(a,b,c)\in \mathbb{R}$ such that 
\begin{equation}
\label{eq:exSW-pol}
\frac{g_\varphi(y)}{g_\varphi'(y)}D_\varphi(y)=a\widetilde{w}_\varphi(y)^2+b\widetilde{w}_\varphi(y)+c.
\end{equation}
We have $\frac{g_\varphi(y)}{g_\varphi'(y)}=\frac{\widetilde{w}_\varphi(y)\sqrt{1-\widetilde{w}_\varphi(y)}}{\widetilde{w}_\varphi'(y)}$, and evaluating this last equation with $y=0$ gives $c=0$. Then, dividing by $\widetilde{w}_\varphi(y)$ and letting $y$ go to $0$ gives $b=-\frac{9}{16}$. Finally, examining the first and second term of the expansion of \eqref{eq:exSW-pol} at $y=0$ gives $a=\frac{3}{4}$. Vice versa, letting $P(y)=\frac{3}{4}y^2-\frac{9}{16}y$, when we develop $D_\varphi(y)$ in series, we find
\begin{equation}
D_\varphi(y)=1+4\,y+9\,{y}^{2}+16\,{y}^{3}+25\,{y}^{4}+ O \left( {y}^{5} \right).
\end{equation}

\end{rem}

\footnotesize
\bibliographystyle{plain}
\bibliography{bibl}

\begin{thebibliography}{10}

\bibitem{AdWeZi-91}
I.~Adan, J.~Wessels, and W.~Zijm.
\newblock Analysis of the asymmetric shortest queue problem.
\newblock {\em Queueing Systems Theory Appl.}, 8(1):1--58, 1991.

\bibitem{BeOwRe-18}
N.~R. Beaton, A.~L. Owczarek, and A.~Rechnitzer.
\newblock Exact solution of some quarter plane walks with interacting
  boundaries.
\newblock {\em Electron. J. Combin.}, 26(3):Paper 3.53, 38, 2019.

\bibitem{BeBMRa-17}
O.~Bernardi, M.~Bousquet-M\'elou, and K.~Raschel.
\newblock Counting quadrant walks via {T}utte's invariant method.
\newblock {\em arXiv}, 1708.08215:1--54, 2017.

\bibitem{BoRaSa-14}
A.~Bostan, K.~Raschel, and B.~Salvy.
\newblock Non-{D}-finite excursions in the quarter plane.
\newblock {\em J. Combin. Theory Ser. A}, 121:45--63, 2014.

\bibitem{BoMuSi-15}
A.~Bouaziz, S.~Mustapha, and M.~Sifi.
\newblock Discrete harmonic functions on an orthant in {$\Bbb Z^d$}.
\newblock {\em Electron. Commun. Probab.}, 20:no. 52, 13, 2015.

\bibitem{BM-16}
M.~Bousquet-M\'elou.
\newblock Square lattice walks avoiding a quadrant.
\newblock {\em J. Combin. Theory Ser. A}, 144:37--79, 2016.

\bibitem{BMMi-10}
M.~Bousquet-M\'elou and M.~Mishna.
\newblock Walks with small steps in the quarter plane.
\newblock In {\em Algorithmic probability and combinatorics}, volume 520 of
  {\em Contemp. Math.}, pages 1--39. Amer. Math. Soc., Providence, RI, 2010.

\bibitem{BuKa-19}
M.~Buchacher and M.~Kauers.
\newblock Inhomogeneous restricted lattice walks.
\newblock In {\em Proceedings of {FPSAC} 2019}, pages 1--12, 2019.

\bibitem{Co-92}
J.~Cohen.
\newblock {\em Analysis of random walks}, volume~2 of {\em Studies in
  Probability, Optimization and Statistics}.
\newblock IOS Press, Amsterdam, 1992.

\bibitem{CoBo-83}
J.~Cohen and O.~Boxma.
\newblock {\em Boundary value problems in queueing system analysis}, volume~79
  of {\em North-Holland Mathematics Studies}.
\newblock North-Holland Publishing Co., Amsterdam, 1983.

\bibitem{DeWa-15}
D.~Denisov and V.~Wachtel.
\newblock Random walks in cones.
\newblock {\em Ann. Probab.}, 43(3):992--1044, 2015.

\bibitem{Du-14}
J.~Duraj.
\newblock Random walks in cones: the case of nonzero drift.
\newblock {\em Stochastic Process. Appl.}, 124(4):1503--1518, 2014.

\bibitem{FaIaMa-17}
G.~Fayolle, R.~Iasnogorodski, and V.~Malyshev.
\newblock {\em Random walks in the quarter plane}, volume~40 of {\em
  Probability Theory and Stochastic Modelling}.
\newblock Springer, Cham, second edition, 2017.
\newblock Algebraic methods, boundary value problems, applications to queueing
  systems and analytic combinatorics.

\bibitem{FaRa-11}
G.~Fayolle and K.~Raschel.
\newblock Random walks in the quarter-plane with zero drift: an explicit
  criterion for the finiteness of the associated group.
\newblock {\em Markov Process. Related Fields}, 17(4):619--636, 2011.

\bibitem{FoMD-01}
R.~Foley and D.~McDonald.
\newblock Join the shortest queue: stability and exact asymptotics.
\newblock {\em Ann. Appl. Probab.}, 11(3):569--607, 2001.

\bibitem{KuSu-03}
I.~Kurkova and Y.~Suhov.
\newblock Malyshev's theory and {JS}-queues. {A}symptotics of stationary
  probabilities.
\newblock {\em Ann. Appl. Probab.}, 13(4):1313--1354, 2003.

\bibitem{Mu-19}
S.~Mustapha.
\newblock Non-{D}-{F}inite {W}alks in a {T}hree-{Q}uadrant {C}one.
\newblock {\em Ann. Comb.}, 23(1):143--158, 2019.

\bibitem{Ra-12}
K.~Raschel.
\newblock Counting walks in a quadrant: a unified approach via boundary value
  problems.
\newblock {\em J. Eur. Math. Soc. (JEMS)}, 14(3):749--777, 2012.

\bibitem{Ra-14}
K.~Raschel.
\newblock Random walks in the quarter plane, discrete harmonic functions and
  conformal mappings.
\newblock {\em Stochastic Process. Appl.}, 124(10):3147--3178, 2014.
\newblock With an appendix by S. Franceschi.

\bibitem{RaTr-18}
K.~Raschel and A.~Trotignon.
\newblock On walks avoiding a quadrant.
\newblock {\em Electronic Journal of Combinatorics}, 26(P3.31):1--34, 2019.

\bibitem{XuBeOw-19}
R.~Xu, N.~R. Beaton, and A.~L. Owczarek.
\newblock Quarter-plane lattice paths with interacting boundaries: Kreweras and
  friends.
\newblock In {\em Proceedings of {FPSAC} 2019}, pages 1--12, 2019.

\end{thebibliography}

\normalsize

\appendix

\section{Non-symmetric case}
\label{app:Non symm}

In this section (except clearly stated) we suppose that the probability transitions of the random walks we consider are not symmetric and a priori neither is the harmonic function. In other words, the random walks satisfy hypotheses \ref{H1}, \ref{H2bis}, \ref{H3} and \ref{H4} with
\begin{enumerate}[label=(\^H\arabic*)]
\setcounter{enumi}{1}
\item \label{H2bis}
We assume that the transition probabilities $p_{0,0}=p_{-1,1}=p_{1,-1}=0$.
\end{enumerate} 
and discrete harmonic functions associated to these random walks satisfy the properties \ref{P1}, \ref{P2}, \ref{P3} but not necessarily \ref{P4}.

Like in Subsection \ref{sub:functional-eq-3pi/4}, we split the three-quadrant into two symmetric convex cones of opening angle $\frac{3\pi}{4}$,  and split the generating function $H(x,y)$ into three generating functions: $L(x,y)$ for harmonic function in the lower part, $D(x,y)$ for the ones on the diagonal and $U(x,y)$ for the ones in the upper part (see \eqref{eq:equation_cut3parts}). Since we do not have the symmetry conditions anymore, instead of one functional equation as in Lemma~\ref{lem:functional_equation_sym_3/4}, we end up with a system of two functional equations.

\begin{lem}
For any random walks with property \ref{H1}, the generating functions $L(x,y)$ and $U(x,y)$ satisfy the following system of functional equations
\begin{equation}
\label{eq:syst_functional_equation}
\left\{
\begin{array}{l c l}
K(x,y)U(x,y)&=&-\left(p_{1,0}y+p_{0,-1}xy^{2}+p_{1,1}+p_{-1,-1}x^{2}y^{2}-xy\right)D(x,y)
\\
&&
+\left(p_{0,1}x+p_{1,1}\right)U_{-0}(x^{-1})-\left(p_{1,0}y+p_{0,-1}xy^{2} \right)D^{\ell}(x,y)
\\
&&
+ p_{1,1}f(1,1)+p_{1,0}f(1,0),
\\
K(x,y)L(x,y)&=&-\left(p_{0,1}x+p_{-1,0}x^{2}y+p_{1,1}+p_{-1,-1}x^{2}y^{2}-xy\right)D(x,y)
\\
&&
+\left(p_{1,0}y+p_{1,1}\right)L_{0-}(y^{-1})-\left(p_{0,1}x+p_{-1,0}x^{2}y\right)D^{u}(x,y)
\\
&&
+ p_{1,1}f(1,1)+p_{0,1}f(0,1).
\end{array}
\right.
\end{equation}

\end{lem}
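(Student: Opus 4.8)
The statement to prove is the last Lemma: a system of two functional equations for $L(x,y)$ and $U(x,y)$ in the non-symmetric setting, obtained by splitting $\mathcal{C}$ into the lower cone, the diagonal and the upper cone. The plan is to mirror exactly the derivation of Lemma~\ref{lem:functional_equation_sym_3/4}, but without invoking \ref{H2} or \ref{P4}, so that the two cones (and their two diagonal generating functions $D^\ell$ and $D^u$) no longer collapse into one. I would first fix notation: recall from \eqref{eq:equation_cut3parts} that $H=L+D+U$, and recall the splittings $D^u(x,y)=\sum_{i\geq1}f(i-1,i)x^{i-2}y^{i-1}$ and $D^\ell(x,y)=\sum_{i\geq1}f(i,i-1)x^{i-1}y^{i-2}$ already introduced in the proof of Lemma~\ref{lem:functional_equation_sym_3/4}.

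The core step is bookkeeping with the harmonicity relation \ref{P1}. For each lattice point $(i,j)$ in the upper cone $\{j\geq 1,\, i\leq j-1\}$, I write $f(i,j)=\sum_{-1\leq i_0,j_0\leq 1} p_{i_0,j_0}f(i+i_0,j+j_0)$, multiply by $x^{i-1}y^{j-1}$, and sum over the cone. Each of the eight neighbor terms (seven, since $p_{0,0}=p_{-1,1}=p_{1,-1}=0$ under \ref{H2bis}) produces a shifted copy of $U$, but the shifts move certain boundary strips out of or into the cone; these corrections are precisely the contributions of the diagonal $D$, the lower-diagonal $D^\ell$, the boundary section $U_{-0}(x^{-1})=\sum_{i\leq 0}f(i,1)x^{i-1}$, and the corner values $f(1,1)$ and $f(1,0)$. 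This is the non-symmetric analogue of \eqref{eq:fcteq_L}; carrying it out carefully and collecting terms — using \ref{P2} to kill $f(i,0)$ for $i\leq 0$ and $f(0,j)$ for $j\leq 0$ — yields, after multiplying through by $xy$ and regrouping via the coefficient polynomials in \eqref{coeffnoyau}, the first equation of \eqref{eq:syst_functional_equation}. The second equation is obtained identically by summing over the lower cone $\{i\geq1,\, j\leq i-1\}$; alternatively one can note that the map $(x,y,i,j)\mapsto(y,x,j,i)$ interchanges the two cones and sends $p_{i_0,j_0}\mapsto p_{j_0,i_0}$, so the second equation follows from the first by this formal symmetry of the derivation (not of the walk), replacing $U\leftrightarrow L$, $D^\ell\leftrightarrow D^u$, $U_{-0}(x^{-1})\leftrightarrow L_{0-}(y^{-1})$, $f(1,0)\leftrightarrow f(0,1)$, and swapping the roles of the two index positions in each $p_{i_0,j_0}$.

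The main obstacle is purely combinatorial rather than conceptual: tracking exactly which boundary strips are added or removed under each of the seven coordinate shifts, and matching the resulting lower-order terms with the stated polynomial coefficients. One must be careful that $D$ appears with a mixed prefactor $p_{1,0}y+p_{0,-1}xy^2+p_{1,1}+p_{-1,-1}x^2y^2-xy$ in the $U$-equation — the term $-xy$ coming from moving the whole $U(x,y)$ to the left side and absorbing the ``diagonal deficit'' — and similarly for $D^\ell$ with prefactor $p_{1,0}y+p_{0,-1}xy^2$; getting the signs and the split between the ``$D$'' and ``$D^\ell$'' contributions right is where an error would most plausibly creep in. Once the two raw equations in the style of \eqref{eq:fcteq_L}--\eqref{eq:fcteq_D-nonsym} are written down, multiplying by $xy$ and grouping monomials in $x,y$ into the kernel coefficients is routine and completes the proof.
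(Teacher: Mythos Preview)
Your overall plan is the paper's: write separate ``raw'' functional equations for $U$, $L$ and $D$ by harmonicity bookkeeping, then combine and multiply by $xy$. But your description of what the bookkeeping in the upper cone actually produces is wrong, and if you carried it out as written you would not land on \eqref{eq:syst_functional_equation}.

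Concretely, summing \ref{P1} over the upper cone $\{j\geq 1,\ i\leq j-1\}$ does \emph{not} bring in the lower-diagonal series $D^\ell$. The boundary strip of the upper cone is the upper diagonal $\{(i-1,i):i\geq 1\}$, so the shifts that leave or enter the cone produce corrections in $D$ and $D^u$, together with $U_{-0}(x^{-1})$ and the single corner value $f(0,1)$. This is exactly the paper's new equation \eqref{eq:fcteq_U}; note that neither $D^\ell$ nor $f(1,1)$ nor $f(1,0)$ appears there. Symmetrically, the raw $L$-equation \eqref{eq:fcteq_L} involves $D$, $D^\ell$, $L_{0-}(y^{-1})$ and $f(1,0)$ only.

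The step you are missing is the \emph{substitution via the diagonal equation} \eqref{eq:fcteq_D-nonsym}. That identity expresses $(p_{0,1}y^{-1}+p_{-1,0}x)D^u$ in terms of $D$, $D^\ell$ and the corner values $f(1,1),f(1,0),f(0,1)$; plugging this into \eqref{eq:fcteq_U} (after multiplying by $xy$) is what trades $D^u$ for $D^\ell$, cancels the $f(0,1)$ term, and introduces $p_{1,1}f(1,1)+p_{1,0}f(1,0)$ as stated in the first line of \eqref{eq:syst_functional_equation}. The $L$-equation is obtained the same way, using \eqref{eq:fcteq_D-nonsym} to trade $D^\ell$ for $D^u$ in \eqref{eq:fcteq_L}. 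Your closing remark that ``multiplying by $xy$ and grouping monomials is routine'' understates this: without the $D$-substitution you will simply not see $D^\ell$ in the $U$-equation (nor $D^u$ in the $L$-equation), and the corner constants will not match. Fix the identification of which near-diagonal series arises in each raw equation and make the combination with \eqref{eq:fcteq_D-nonsym} explicit, and your argument coincides with the paper's.
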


\begin{rem}
Notice the symmetry between $x$ and $y$ in \eqref{eq:syst_functional_equation}. In the case of a symmetric random walk (\textit{i.e.} if $p_{i,j}=p_{j,i}$) the two functional equations are the same. 
\end{rem}

\begin{proof}
The strategy of the proof is similar to the proof of Lemma~\ref{lem:functional_equation_sym_3/4}. In addition to the functional equations for $L(x,y)$ and $D(x,y)$, see \eqref{eq:fcteq_L} and \eqref{eq:fcteq_D-nonsym}, we can write a functional equation for $U(x,y)$:
\begin{multline}
\label{eq:fcteq_U}
U(x,y)=\left( \sum_{-1\leq i,j\leq 1}p_{i,j}x^{-i}y^{-j}\right) U(x,y)+ \left(p_{1,0}x^{-1}+p_{0,-1}y\right)D(x,y)-\left(p_{-1,0}x+p_{0,1}y^{-1}\right)D^{u}(x,y)
\\
-\left(p_{0,1}y^{-1}+p_{1,1}x^{-1}y^{-1}\right)U_{-0}(x^{-1})+p_{0,1}y^{-1}f(0,1)x^{-1}.
\end{multline}
Mixing equations  \eqref{eq:fcteq_L}, \eqref{eq:fcteq_U} and \eqref{eq:fcteq_D-nonsym} and multiplying by $xy$, we get \eqref{eq:syst_functional_equation}.
\end{proof}

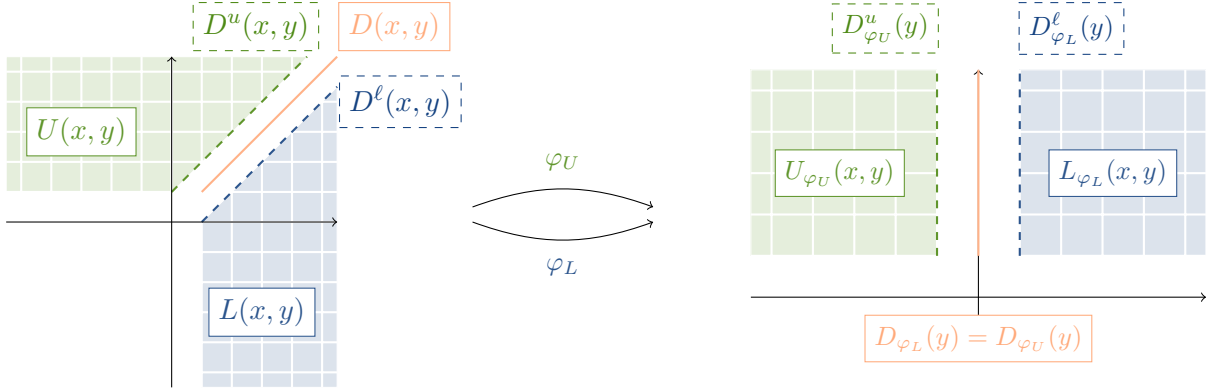
\begin{figure}[t]
\centering
\begin{tikzpicture}
\begin{scope}[scale=0.4, yshift=2.5cm]
\tikzstyle{quadri}=[rectangle,draw,fill=white]
\draw[white, fill=dblue!15] (1,0) -- (5.5,4.5) -- (5.5,-5.5) -- (1,-5.5);
\draw[white, fill=dgreen!15] (0,1) -- (-5.5,1) -- (-5.5,5.5) -- (4.5,5.5);
\draw[white, thick] (0,-5.5) grid (5.5,5.5);
\draw[white, thick] (-5.5,0) grid (0,5.5);
\draw[Apricot!90, thick] (1,1) -- (5.5,5.5);
\draw[dashed, dgreen!90, thick] (0,1) -- (4.5,5.5);
\draw[dashed, dblue!90, thick] (1,0) -- (5.5,4.5);
\draw[->] (0,-5.5) -- (0,5.5);
\draw[->] (-5.5,0) -- (5.5,0);
\node[dblue!90,quadri] at (3,-3) {${L}(x,y)$};
\node[dgreen!90, quadri] at (-3,3) {${U}(x,y)$};
\node[Apricot!100, quadri] at (7.4,6.5) {${D}(x,y)$};
\node[dgreen!100, quadri, dashed] at (2.8,6.5) {${D}^u(x,y)$};
\node[dblue!100, quadri, dashed] at (7.6,4) {${D}^\ell(x,y)$};
\end{scope}

\begin{scope}[scale=0.4, xshift= 13cm]
\draw[->] (-3,3) to[out=20,in=160] (3,3);
\draw [] (0,4.5) node[dgreen!90]{\small {$\varphi_{U}$}};
\draw[->] (-3,2.5) to[out=-20,in=-160] (3,2.5);
\draw [] (0,1) node[dblue!90]{\small {$\varphi_{L}$}};
\end{scope}

\begin{scope}[scale=0.55, xshift=19.5cm]
\tikzstyle{quadri}=[rectangle,draw,fill=white]
\draw[white, fill=dblue!15] (1,1) -- (1,5.5) -- (5.5,5.5) -- (5.5,1);
\draw[white, fill=dgreen!15] (-1,1) -- (-5.5,1) -- (-5.5,5.5) -- (-1,5.5);
\draw[white, thick] (-5.5,0) grid (5.5,5.5);
\draw[dashed, dgreen!90, thick] (-1,1) -- (-1,5.5);
\draw[dashed, dblue!90, thick] (1,1) -- (1,5.5);
\draw[->] (0,-0.5) -- (0,5.5);
\draw[->] (-5.5,0) -- (5.5,0);
\draw[Apricot!90, thick] (0,1) -- (0,5.5);
\node[dblue!90,quadri] at (3.25,3) {\small $L_{\varphi_{L}}(x,y)$};
\node[dgreen!90, quadri] at (-3.35,3) {\small $U_{\varphi_{U}}(x,y)$};
\node[Apricot!100, quadri] at (0,-1) {\small $D_{\varphi_{L}}(y)=D_{\varphi_{U}}(y)$};
\node[dgreen!100, quadri, dashed] at (-2.25,6.5) {\small $D_{\varphi_{U}}^u(y)$};
\node[dblue!100, quadri, dashed] at (2.25,6.5) {\small $D_{\varphi_{L}}^\ell(y)$};
\end{scope}

\end{tikzpicture}
\caption{Decomposition of the three-quarter plane, associated generating functions and changes of variable}
\label{fig:some_sections-1/4}
\end{figure}

In this system of two functional equations, the bivariate generating function $U(x,y)$ (resp.\ $L(x,y)$) is related to the bivariate generating functions $D(x,y)$, $D^{\ell}(x,y)$ (resp.\ $D^{u}(x,y)$) and the univariate generating function $U_{-0}(x^{-1})$ (resp.\ $L_{0-}(y^{-1})$).  In order to simplify this system of functional equations, we perform two changes of variables $\varphi_{L}$ for the lower part and $\varphi_{U}$ for the upper part with
\begin{equation*}
\varphi_{U}(x,y)=(x,x^{-1}y) \qquad \text{and} \qquad \varphi_{L}(x,y)=(xy, x^{-1}).
\end{equation*}
This change of variables transforms the upper part $\{j\geq 1, i\leq j-1\}$ into the left quadrant $\{i\leq -1, j\geq 0\}$ and the lower part $\{i\geq 1, j\leq i-1\}$ into the right quadrant $\{i\geq 1, j\geq 0\}$ (see Figure~\ref{fig:some_sections-1/4}). Note that the diagonal is changed by both $\varphi_{L}$ and $\varphi_{U}$ into the positive $y$-axis (see Figure~\ref{fig:some_sections-1/4}). The step set is changed as well, and walks in the three-quadrant can be seen as inhomogeneous walks in the half-plane with two different step sets in each quadrant and a mixed step set on the positive $y$-axis (see Figure~\ref{fig:walks-quarter-to-half-plane}). Other type of inhomogeneous walks are studied in \cite{BoMuSi-15, BuKa-19}. 

In the symmetric case (symmetry of the random walks and hence the symmetry of the unique positive harmonic function), the study of harmonic functions of  random walks in the three quadrant is then equivalent to the study of harmonic functions of random walks reflected on the $y$-axis and constrained by the $x$-axis  in the positive quadrant (see Figure~\ref{fig:walks-half-to-quarter-plane}). Articles \cite{BeOwRe-18, XuBeOw-19} work on problem in the same vein: their authors study walks in the quadrant with different weights on the boundary.

\begin{figure}[t]
\centering
\begin{tikzpicture}

\begin{scope}[scale=0.45, yshift=4cm]
\draw[white, fill=gray!10] (-5.5,0) -- (-5.5,5.5) -- (5.5,5.5) -- (5.5,0);
\draw[white, fill=gray!10] (0,0) -- (5.5,0) -- (5.5,-5.5) -- (0,-5.5);
\draw[white, thick] (-5.5,0) grid (0,5.5);
\draw[white, thick] (0,-5.5) grid (5.5,5.5);
\draw[->] (0,-5.5) -- (0,5.5);
\draw[->] (-5.5,0) -- (5.5,0);
\draw[nred,thick,->] (3,3) -- (4,3)node[right]{\tiny $p_{1,0}$};
\draw[nred,thick,->] (3,3) -- (4,4)node[above right]{\tiny $p_{1,1}$};
\draw[nred,thick,->] (3,3) -- (3,4)node[above]{\tiny $p_{0,1}$};
\draw[nred,thick,->] (3,3) -- (2,3)node[left]{\tiny $p_{-1,0}$};
\draw[nred,thick,->] (3,3) -- (2,2)node[below]{\tiny $p_{-1,-1}$};
\draw[nred,thick,->] (3,3) -- (3,2)node[below right]{\tiny $p_{0,-1}$};

\draw[nred,thick,->] (0,-3) -- (1,-3)node[right]{\tiny $p_{1,0}$};
\draw[nred,thick,->] (0,-3) -- (1,-2)node[above right]{\tiny $p_{1,2}$};
\draw[nred,thick,->] (0,-3) -- (0,-2)node[above]{\tiny $p_{0,1}$};
\draw[nred,thick,->] (0,-3) -- (0,-4)node[below]{\tiny $p_{0,-1}$};

\draw[nred,thick,->] (-3,0) -- (-4,0);
\draw[nred,thick] (-4,-0.3) node[left]{\tiny $p_{-1,0}$};
\draw[nred,thick,->] (-3,0) -- (-3,1)node[above]{\tiny $p_{0,1}$};
\draw[nred,thick,->] (-3,0) -- (-2,1)node[above right]{\tiny $p_{1,1}$};
\draw[nred,thick,->] (-3,0) -- (-2,0);
\draw[nred,thick] (-2,-0.3) node[right]{\tiny $p_{1,0}$};

\end{scope}

\begin{scope}[scale=0.6, xshift=14cm]
\tikzstyle{quadri}=[rectangle,draw,fill=white]
\draw[white, fill=gray!10] (-7.5,0) -- (7.5,0) -- (7.5,6.5) -- (-7.5,6.5);
\draw[white, thick] (-7.5,0) grid (7.5,6.5);
\draw[->] (0,-0.5) -- (0,6.5);
\draw[->] (-7.5,0) -- (7.5,0);
\draw[dgreen,thick,->] (-5,4) -- (-4,4)node[right]{\tiny $p_{1,0}$};
\draw[dgreen,thick,->] (-5,4) -- (-6,5)node[above left]{\tiny $p_{0,1}$};
\draw[dgreen,thick,->] (-5,4) -- (-5,5)node[above]{\tiny $p_{1,1}$};
\draw[dgreen,thick,->] (-5,4) -- (-6,4)node[left]{\tiny $p_{-1,0}$};
\draw[dgreen,thick,->] (-5,4) -- (-4,3)node[below right]{\tiny $p_{0,-1}$};
\draw[dgreen,thick,->] (-5,4) -- (-5,3)node[below]{\tiny $p_{-1,-1}$};

\draw[dgreen,thick,->] (-5,0) -- (-4,0);
\draw[dgreen,thick] (-4,-0.3) node[right]{\tiny $p_{1,0}$};
\draw[dgreen,thick,->] (-5,0) -- (-6,1)node[above left]{\tiny $p_{0,1}$};
\draw[dgreen,thick,->] (-5,0) -- (-5,1)node[above]{\tiny $p_{1,1}$};
\draw[dgreen,thick,->] (-5,0) -- (-6,0);
\draw[dgreen,thick] (-6,-0.3) node[left]{\tiny $p_{-1,0}$};

\draw[dblue,thick,->] (5,4) -- (6,4)node[right]{\tiny $p_{0,-1}$};
\draw[dblue,thick,->] (5,4) -- (5,5)node[above]{\tiny $p_{1,1}$};
\draw[dblue,thick,->] (5,4) -- (6,5)node[above right]{\tiny $p_{1,0}$};
\draw[dblue,thick,->] (5,4) -- (4,4)node[left]{\tiny $p_{0,1}$};
\draw[dblue,thick,->] (5,4) -- (5,3)node[below]{\tiny $p_{-1,-1}$};
\draw[dblue,thick,->] (5,4) -- (4,3)node[below left]{\tiny $p_{-1,0}$};

\draw[dblue,thick,->] (5,0) -- (6,0);
\draw[dblue,thick] (6,-0.3) node[right]{\tiny $p_{0,-1}$};
\draw[dblue,thick,->] (5,0) -- (5,1)node[above]{\tiny $p_{1,1}$};
\draw[dblue,thick,->] (5,0) -- (6,1)node[above right]{\tiny $p_{1,0}$};
\draw[dblue,thick,->] (5,0) -- (4,0);
\draw[dblue,thick] (4,-0.3) node[left]{\tiny $p_{0,1}$};

\draw[orange!90,thick,->] (0,4) -- (1,4)node[right]{\tiny $p_{0,-1}$};
\draw[orange!90,thick,->] (0,4) -- (0,5)node[above]{\tiny $p_{1,1}$};
\draw[orange!90,thick,->] (0,4) -- (1,5)node[above right]{\tiny $p_{1,0}$};
\draw[orange!90,thick,->] (0,4) -- (-1,5)node[above left]{\tiny $p_{0,1}$};
\draw[orange!90,thick,->] (0,4) -- (0,3)node[below]{\tiny $p_{-1,-1}$};
\draw[orange!90,thick,->] (0,4) -- (-1,4)node[left]{\tiny $p_{-1,0}$};

\draw[orange!90,thick,->] (0,0) -- (1,0);
\draw[orange!90,thick] (1,-0.3) node[right]{\tiny $p_{0,-1}$};
\draw[orange!90,thick,->] (0,0) -- (0,1)node[above]{\tiny $p_{1,1}$};
\draw[orange!90,thick,->] (0,0) -- (1,1)node[above right]{\tiny $p_{1,0}$};
\draw[orange!90,thick,->] (0,0) -- (-1,1)node[above left]{\tiny $p_{0,1}$};
\draw[orange!90,thick,->] (0,0) -- (-1,0);
\draw[orange!90,thick] (-1,-0.3) node[left]{\tiny $p_{-1,0}$};

\end{scope}

\end{tikzpicture}
\caption{Random walks avoiding a quadrant (on the left) can be seen as walks in the half-plane with probability transitions  $\varphi_{U}((p_{i,j})_{-1\leq i,j\leq 1})$ in the left quadrant and $\varphi_{L}((p_{i,j})_{-1\leq i,j\leq 1})$ on the right quadrant (on the right).}
\label{fig:walks-quarter-to-half-plane}
\end{figure}

\begin{figure}[t]
\centering
\begin{tikzpicture}

\begin{scope}[scale=0.6, xshift=0cm]
\tikzstyle{quadri}=[rectangle,draw,fill=white]
\draw[white, fill=gray!10] (-7.5,0) -- (7.5,0) -- (7.5,6.5) -- (-7.5,6.5);
\draw[white, thick] (-7.5,0) grid (7.5,6.5);
\draw[->] (0,-0.5) -- (0,6.5);
\draw[->] (-7.5,0) -- (7.5,0);
\draw[dgreen,thick,->] (-5,4) -- (-4,4)node[right]{\tiny $p_{1,0}$};
\draw[dgreen,thick,->] (-5,4) -- (-6,5)node[above left]{\tiny $p_{1,0}$};
\draw[dgreen,thick,->] (-5,4) -- (-5,5)node[above]{\tiny $p_{1,1}$};
\draw[dgreen,thick,->] (-5,4) -- (-6,4)node[left]{\tiny $p_{-1,0}$};
\draw[dgreen,thick,->] (-5,4) -- (-4,3)node[below right]{\tiny $p_{-1,0}$};
\draw[dgreen,thick,->] (-5,4) -- (-5,3)node[below]{\tiny $p_{-1,-1}$};

\draw[dgreen,thick,->] (-5,0) -- (-4,0);
\draw[dgreen,thick] (-4,-0.3) node[right]{\tiny $p_{1,0}$};
\draw[dgreen,thick,->] (-5,0) -- (-6,1)node[above left]{\tiny $p_{1,0}$};
\draw[dgreen,thick,->] (-5,0) -- (-5,1)node[above]{\tiny $p_{1,1}$};
\draw[dgreen,thick,->] (-5,0) -- (-6,0);
\draw[dgreen,thick] (-6,-0.3) node[left]{\tiny $p_{-1,0}$};

\draw[dblue,thick,->] (5,4) -- (6,4)node[right]{\tiny $p_{-1,0}$};
\draw[dblue,thick,->] (5,4) -- (5,5)node[above]{\tiny $p_{1,1}$};
\draw[dblue,thick,->] (5,4) -- (6,5)node[above right]{\tiny $p_{1,0}$};
\draw[dblue,thick,->] (5,4) -- (4,4)node[left]{\tiny $p_{1,0}$};
\draw[dblue,thick,->] (5,4) -- (5,3)node[below]{\tiny $p_{-1,-1}$};
\draw[dblue,thick,->] (5,4) -- (4,3)node[below left]{\tiny $p_{-1,0}$};

\draw[dblue,thick,->] (5,0) -- (6,0);
\draw[dblue,thick] (6,-0.3) node[right]{\tiny $p_{-1,0}$};
\draw[dblue,thick,->] (5,0) -- (5,1)node[above]{\tiny $p_{1,1}$};
\draw[dblue,thick,->] (5,0) -- (6,1)node[above right]{\tiny $p_{1,0}$};
\draw[dblue,thick,->] (5,0) -- (4,0);
\draw[dblue,thick] (4,-0.3) node[left]{\tiny $p_{1,0}$};

\draw[orange!90,thick,->] (0,4) -- (1,4)node[right]{\tiny $p_{-1,0}$};
\draw[orange!90,thick,->] (0,4) -- (0,5)node[above]{\tiny $p_{1,1}$};
\draw[orange!90,thick,->] (0,4) -- (1,5)node[above right]{\tiny $p_{1,0}$};
\draw[orange!90,thick,->] (0,4) -- (-1,5)node[above left]{\tiny $p_{1,0}$};
\draw[orange!90,thick,->] (0,4) -- (0,3)node[below]{\tiny $p_{-1,-1}$};
\draw[orange!90,thick,->] (0,4) -- (-1,4)node[left]{\tiny $p_{-1,0}$};

\draw[orange!90,thick,->] (0,0) -- (1,0);
\draw[orange!90,thick] (1,-0.3) node[right]{\tiny $p_{-1,0}$};
\draw[orange!90,thick,->] (0,0) -- (0,1)node[above]{\tiny $p_{1,1}$};
\draw[orange!90,thick,->] (0,0) -- (1,1)node[above right]{\tiny $p_{1,0}$};
\draw[orange!90,thick,->] (0,0) -- (-1,1)node[above left]{\tiny $p_{1,0}$};
\draw[orange!90,thick,->] (0,0) -- (-1,0);
\draw[orange!90,thick] (-1,-0.3) node[left]{\tiny $p_{-1,0}$};
\end{scope}

\begin{scope}[scale=0.6, xshift=10cm]
\draw[white, fill=gray!10] (0,0) -- (7.5,0) -- (7.5,6.5) -- (0,6.5);
\draw[white, thick] (0,0) grid (7.5,6.5);
\draw[->] (0,-0.5) -- (0,6.5);
\draw[->] (-0.5,0) -- (7.5,0);

\draw[dpurple,thick,->] (5,4) -- (6,4)node[right]{\tiny $p_{-1,0}$};
\draw[dpurple,thick,->] (5,4) -- (5,5)node[above]{\tiny $p_{1,1}$};
\draw[dpurple,thick,->] (5,4) -- (6,5)node[above right]{\tiny $p_{1,0}$};
\draw[dpurple,thick,->] (5,4) -- (4,4)node[left]{\tiny $p_{1,0}$};
\draw[dpurple,thick,->] (5,4) -- (5,3)node[below]{\tiny $p_{-1,-1}$};
\draw[dpurple,thick,->] (5,4) -- (4,3)node[below left]{\tiny $p_{-1,0}$};

\draw[dpurple,thick,->>] (0,3) -- (1,3)node[right]{\tiny $2p_{-1,0}$};
\draw[dpurple,thick,->>] (0,3) -- (1,4)node[above right]{\tiny $2p_{1,0}$};
\draw[dpurple,thick,->] (0,3) -- (0,2)node[below]{\tiny $p_{-1,-1}$};
\draw[dpurple,thick,->] (0,3) -- (0,4)node[above]{\tiny $p_{1,1}$};

\draw[dpurple,thick,->] (3,0) -- (4,0);
\draw[dpurple,thick] (4,-0.3) node[right]{\tiny ${p}_{-1,0}$};
\draw[dpurple,thick,->] (3,0) -- (2,0);
\draw[dpurple,thick] (2,-0.3) node[left]{\tiny ${p}_{1,0}$};
\draw[dpurple,thick,->] (3,0) -- (4,1)node[above right]{\tiny ${p}_{1,0}$};
\draw[dpurple,thick,->] (3,0) -- (3,1)node[above]{\tiny ${p}_{1,1}$};

\end{scope}

\end{tikzpicture}
\caption{Random walks with symmetric probability transitions avoiding a quadrant can be seen as random walks reflected on the $y$-axis and constrained by the $x$-axis.}
\label{fig:walks-half-to-quarter-plane}
\end{figure}

Unfortunately, due to number of unknown functions, we are not able to solve this system of functional equations yet. Let us end this section by pointing out that the split cone along the diagonal can also be related to the Join-the-Shortest-Queue model (JSQ).  Supposed that there are two lines (see Figure~\ref{fig:JSQ}, left), each of them with a service time exponentially  distributed of rate $r_1$ and $r_2$ and that the customers arrive according to a Poisson process. The clients  choose the shorter queue and if both lines happened to have the same length, the costumers pick one or the other with probability $p_1$ or $p_2$.  A common question in queuing theory is to obtain closed-form expression for the stationary distribution. This JSQ problem can be modeled by random walks in the quarter plane split into two octants, each axis representing the length of each line (see Figure~\ref{fig:JSQ}, right). The symmetric case (when $r_{1}=r_{2}$ and $p_{1}=p_{2}=1/2$) is solvable, see \cite{AdWeZi-91,FoMD-01,KuSu-03} and  \cite[Chap.\ 10]{FaIaMa-17} for reference. On the other side, the non-symmetric case is still an open problem.

\begin{figure}[ht!]
\centering
\begin{tabular}{c c}
\begin{tikzpicture}
\begin{scope}[scale=0.3]
\draw (6.5,1.5) circle (0.3 cm);
\draw[thick, black!90, fill=black!90] (6.5,1.5) circle (0.08 cm);
\draw [black, left] (-7.75,0) node{Arrivals};
\draw [black, right] (9.5,0) node{Departures};
\draw (-5.25,0 ) -- ( -7.25,-2 ) -- (-7.25,2 ) -- cycle;
\shade[left color=dblue!10, right color=dblue!60] (0,2) rectangle (6,5);
\draw (-2,2) rectangle (6,5);
\foreach \x in {-2,-1,...,6} \draw (\x,2) -- (\x,5)  ;
\shade[left color=dgreen!10, right color=dgreen!60] (3,-2) rectangle (6,-5);
\draw (-2,-2) rectangle (6,-5);
\foreach \x in {-2,-1,...,6} \draw (\x,-2) -- (\x,-5)  ;
\draw [->] (-5,0.25) -- (-2.25,3.5);
\draw [->] (-5,-0.25) -- (-2.25,-3.5);
\draw [->] (6.25,3.5) -- (9,0.25);
\draw [->] (6.25,-3.5) -- (9,-0.25);
\end{scope}
\end{tikzpicture}
&
\begin{tikzpicture}
\begin{scope}[scale=0.5]
\draw[white, fill=gray!10] (-1.5,-1.5) -- (-1.5,5.5) -- (5.5,5.5) -- (5.5,-1.5);
\draw[white, thick] (-1.5,-1.5) grid (5.5,5.5);
\draw[] (0,0) -- (5.5,5.5);
\draw[->] (0,-1.5) -- (0,5.5);
\draw[->] (-1.5,0) -- (5.5,0);
\draw[->, nred, semithick] (3,3) -- (4,3);
\draw[->, nred, semithick] (3,3) -- (2,3);
\draw[->, nred, semithick] (3,3) -- (3,4);
\draw[->, nred, semithick] (3,3) -- (3,2);
\node[below right, nred] at (2.8,3.1) {\tiny{$p''_{i,j}$}};
\draw[->,dgreen, semithick] (3,1) -- (3,2);
\draw[->,dgreen, semithick] (3,1) -- (2,1);
\draw[->,dgreen, semithick] (3,1) -- (3,0);
\node[below right, dgreen] at (2.8,0.1) {\tiny{$p'_{i,j}$}};
\draw[->, dblue, semithick] (1,3) -- (0,3);
\draw[->, dblue, semithick] (1,3) -- (1,2);
\draw[->, dblue, semithick] (1,3) -- (2,3);
\node[above left, dblue] at (0.25,2.8) {\tiny{$p_{i,j}$}};
\end{scope}
\end{tikzpicture}
\end{tabular}
\caption{Left: the JSQ model can be represented as a system of two queues, in which the customers choose the shortest one (the green one, on the picture). Right: representation of the JSQ model as an inhomogeneous random walk in the quadrant}
\label{fig:JSQ}
\end{figure}
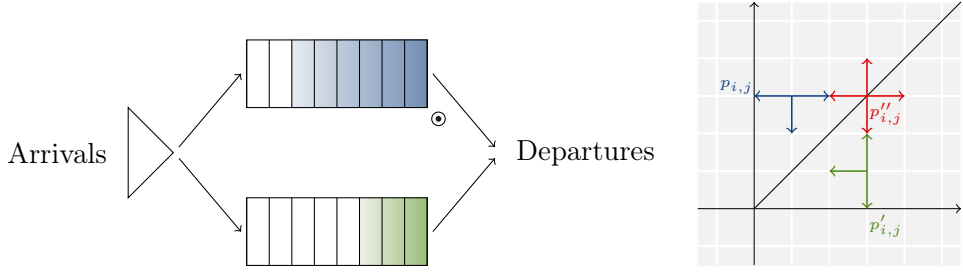

\section{Discrete harmonic functions in the quadrant}
\label{app:quadrant}

Even if discrete harmonic functions of random walks in the quarter plane are already studied in \cite{Ra-14}, we want to point out that the strategy of splitting the domain into three parts (the upper part, the diagonal and the lower part) can also be performed in the quarter plane $\mathcal Q$ defined in \eqref{eq:quarter} and allow us to find explicit expressions of harmonic functions.  We suppose that random walks satisfy the hypotheses \ref{H1}, \ref{H2}, \ref{H3}, \ref{H4}, and associated discrete harmonic functions $\widetilde{f}$ the properties \ref{P1q}, \ref{P2q}, \ref{P3q}. The generating function of such harmonic functions is defined in \eqref{eq:generating_function_1/4}. We split the quadrant $\mathcal Q$ in three parts: the lower part $\{ i\geq 2, 1\leq j\leq i-1\}$, the diagonal and the upper part $\{j\geq 2, 1\leq i\leq j-1\}$. As in \eqref{eq:equation_cut3parts}, by construction we have 
\begin{equation}
\widetilde{H}(x,y)=\widetilde{L}(x,y)+\widetilde{D}(x,y)+\widetilde{U}(x,y),
\end{equation}
where $\widetilde{L}(x,y)=\sum\limits_{\substack{i \geq 2 \\ 1\leq j\leq i-1}}\widetilde{f}(i,j)x^{i-1} y^{j-1}$ denotes the generating function of harmonic functions associated to random walks ending in the lower part, $\widetilde{D}(x,y)=\sum\limits_{\substack{i\geq 1}}\widetilde{f}(i,i)x^{i-1} y^{i-1} $ ending on the diagonal and $\widetilde{U}(x,y)=\sum\limits_{\substack{j \geq 2 \\1\leq i\leq j-1 }}\widetilde{f}(i,j)x^{i-1} y^{j-1}$ in the upper part (see Figure~\ref{fig:some_sections-/4}).

\begin{figure}[t]
\centering
\begin{tikzpicture}
\begin{scope}[scale=0.65]
\tikzstyle{quadri}=[rectangle,draw,fill=white]
\draw[white, fill=dblue!15] (2,1) -- (5.5,4.5) -- (5.5,1);
\draw[white, fill=dgreen!15] (1,2) -- (1,5.5) -- (4.5,5.5);
\draw[white, thick] (0,0) grid (5.5,5.5);
\draw[Apricot!90, thick] (1,1) -- (5.5,5.5);
\draw[dashed, dgreen!90, thick] (1,2) -- (4.5,5.5);
\draw[dashed, dblue!90, thick] (2,1) -- (5.5,4.5);
\draw[->] (0,0) -- (0,5.5);
\draw[->] (0,0) -- (5.5,0);
\node[dblue!90,quadri] at (4,1) {\small $\widetilde{L}(x,y)$};
\node[dgreen!90, quadri] at (1.5,4.5) {\small $\widetilde{U}(x,y)$};
\node[Apricot!100, quadri] at (7.3,6.5) {\small $\widetilde{D}(x,y)$};
\node[dgreen!100, quadri, dashed] at (3.5,6.5) {\small $\widetilde{D}^u(x,y)$};
\node[dblue!100, quadri, dashed] at (7.3,4.5) {\small $\widetilde{D}^\ell(x,y)$};
\end{scope}

\end{tikzpicture}
\caption{Decomposition of the quadrant and associated generating functions}
\label{fig:some_sections-/4}
\end{figure}
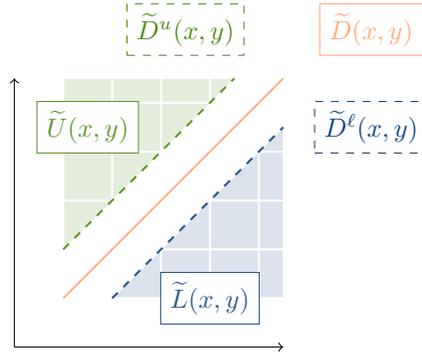

We can write a functional equation for each section and finally get a functional equation in terms of $\widetilde{L}(x,y)$, $\widetilde{L}_{-0}$ and $\widetilde{D}(x,y)$.

\begin{lem} 
\label{lem:functional_equation_sym}
For any random walks with property \ref{H1} and \ref{H2}, the generating function $\widetilde{L}(x,y)$ satisfies the following functional equation
\begin{multline}
\label{eq:functional_equation_sym-/4}
K(x,y)\widetilde{L}(x,y)=-\left(p_{0,1}x+p_{-1,0}x^{2}y+\frac{1}{2}\left(p_{1,1}+p_{-1,-1}x^{2}y^{2}-xy\right)\right)\widetilde{D}(x,y)
\\
+\left(p_{0,1}x+p_{1,1}\right)\widetilde{L}_{-0}(x)
+\left( p_{0,1}x+\frac{1}{2}p_{1,1}\right) \widetilde{f}(1,1),
\end{multline}
with $\widetilde{L}_{-0}(x)= \sum_{i\geq 2} \widetilde{f}(i,1)x^{i-1}$.
\end{lem}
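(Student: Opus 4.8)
The plan is to closely follow the proof of Lemma~\ref{lem:functional_equation_sym_3/4}, replacing the lower octant of $\mathcal{C}$ by the lower octant $\{i\geq 2,\ 1\leq j\leq i-1\}$ of $\mathcal{Q}$, and carefully tracking the boundary corrections, which differ between the two geometries.

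First I would obtain a raw functional equation for $\widetilde{L}(x,y)$, the quadrant analogue of \eqref{eq:fcteq_L}: multiply the harmonicity relation \ref{P1q}, namely $\widetilde{f}(i,j)=\sum_{i_0,j_0}p_{i_0,j_0}\widetilde{f}(i+i_0,j+j_0)$, by $x^{i-1}y^{j-1}$ and sum over the lower octant. Reindexing the right-hand side by $(k,l)=(i+i_0,j+j_0)$ produces the bulk term $\bigl(\sum_{i_0,j_0}p_{i_0,j_0}x^{-i_0}y^{-j_0}\bigr)\widetilde{L}(x,y)$ plus corrections supported on the thin layer where the shifted octant and the octant disagree. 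That layer runs along the diagonal edge $j=i-1$ (contributing $\widetilde{D}(x,y)$ and $\widetilde{D}^{\ell}(x,y)$, exactly as in \eqref{eq:fcteq_L}), along the bottom edge $j=1$ (where, because the jumps $p_{0,-1},p_{-1,-1}$ land on the zero line $j=0$ by \ref{P2q} and $p_{1,-1}=0$ by \ref{H2}, the surviving contributions reduce to the univariate section $\widetilde{L}_{-0}(x)=\sum_{i\geq 2}\widetilde{f}(i,1)x^{i-1}=\widetilde{L}(x,0)$ and the corner value $\widetilde{f}(1,1)$), and along the corner $i=2$ where the two edges meet. Every term in $\widetilde{f}(i,0)$ or $\widetilde{f}(0,j)$ vanishes by \ref{P2q}; this is what makes the quadrant computation slightly shorter than \eqref{eq:fcteq_L}, where the $f(1,0)$ term survives.

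Next I would do the same at the diagonal $\{i=j\}$, getting the analogue of \eqref{eq:fcteq_D-nonsym}: an equation for $\widetilde{D}(x,y)$ in terms of $\widetilde{D}$, $\widetilde{D}^{\ell}$, $\widetilde{D}^{u}$ and $\widetilde{f}(1,1)$, the $i=1$ corrections using $\widetilde{f}(0,0)=\widetilde{f}(1,0)=\widetilde{f}(0,1)=0$. Using $p_{i,j}=p_{j,i}$ from \ref{H2} together with the symmetry $\widetilde{f}(i,j)=\widetilde{f}(j,i)$ of the harmonic function (automatic for the positive one, cf.\ Remark~\ref{rem:symm-cdt}), one checks $x\,\widetilde{D}^{u}(x,y)=y\,\widetilde{D}^{\ell}(x,y)$, so the $\widetilde{D}^{u}$ and $\widetilde{D}^{\ell}$ terms merge into a single one, giving the quadrant analogue of \eqref{eq:fcteq_D}. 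Solving that for $\widetilde{D}^{\ell}(x,y)$ and substituting into the raw $\widetilde{L}$-equation eliminates $\widetilde{D}^{\ell}$; multiplying through by $xy$ and grouping the coefficients of $\widetilde{D}(x,y)$, $\widetilde{L}_{-0}(x)$ and $\widetilde{f}(1,1)$ against the kernel coefficients of \eqref{coeffnoyau} yields \eqref{eq:functional_equation_sym-/4}.

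The main obstacle is the bookkeeping of the correction terms in the first two steps: pinning down the prefactor $p_{0,1}x+p_{1,1}$ of $\widetilde{L}_{-0}(x)$ and verifying that the contributions at the corners $i=2$ and $i=j=1$ assemble into precisely $\bigl(p_{0,1}x+\tfrac12 p_{1,1}\bigr)\widetilde{f}(1,1)$. The factor $\tfrac12$ (and, upstream, the collapse $x\widetilde{D}^{u}=y\widetilde{D}^{\ell}$) is the only place where symmetry is genuinely used, and it is exactly what distinguishes \eqref{eq:functional_equation_sym-/4} from the non-symmetric system \eqref{eq:syst_functional_equation}; everything else is routine reindexing, parallel to \cite[Sec.~2]{Ra-14} and to the proof of Lemma~\ref{lem:functional_equation_sym_3/4}.
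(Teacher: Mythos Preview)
Your proposal is correct and follows exactly the strategy the paper intends: the paper does not spell out a proof of this lemma but presents it as the quadrant analogue of Lemma~\ref{lem:functional_equation_sym_3/4}, whose proof it mirrors step by step (raw equation for $\widetilde{L}$, equation for $\widetilde{D}$, symmetry reduction $\widetilde{D}^u\leftrightarrow\widetilde{D}^\ell$, substitution and multiplication by $xy$). Your identification of the boundary differences---the bottom edge $j=1$ replacing the axis $i=1$, the vanishing of $\widetilde f(1,0),\widetilde f(0,1)$, and the resulting extra $p_{0,1}x\,\widetilde f(1,1)$ term---is precisely the bookkeeping that distinguishes \eqref{eq:functional_equation_sym-/4} from \eqref{eq:functional_equation_sym}.
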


In order to simplify the last functional equation \eqref{eq:functional_equation_sym-/4}, we apply the following change of variables
\begin{equation}
\label{eq:change-var-/4}
\psi(x,y)=(x,x^{-1}y).
\end{equation}
The equation \eqref{eq:functional_equation_sym-/4} is changed into
\begin{equation}
\label{eq:functional_equation_octant/4}
K_{\psi}(x,y)\widetilde{L}_{\psi}(x,y)=-\left[x\widetilde{\alpha}_{\psi}(y)+\frac{1}{2}\widetilde{\beta}_{\psi}(y)\right]\widetilde{D}_{\psi}(y)
+K_{\psi}(x,0)\widetilde{L}_{\psi}(x,0)+\left( p_{0,1}x+\frac{1}{2}p_{1,1}\right) \widetilde{f}(1,1),
\end{equation}
with
\begin{equation}
\left\{
\begin{array}{l c l c l}
K\left(\psi(x,y)\right)&=&\displaystyle\frac{1}{x}K_{\psi}(x,y),&&\\
&&\\
\widetilde{L}\left(\psi(x,y)\right)&=&x\widetilde{L}_{\psi}(x,y)&=&x\displaystyle\sum_{i,j\geq 1}\widetilde{f}(i+j, j)x^{i-1}y^{j-1},\\
&&\\
\widetilde{D}\left(\psi(x,y)\right)&=&\widetilde{D}_{\psi}(y)&=&\displaystyle\sum_{i\geq 1}\widetilde{f}(i,i)y^{i-1},
\end{array}
\right.
\end{equation}
and 
\begin{multline}
\label{eq:noyaupsi}
K_{\psi}(x,y)=\alpha_{\psi}(x)y^{2}+\beta_{\psi}(x)y+\gamma_{\psi}(x)=\widetilde{\alpha}_{\psi}(y)x^{2}+\widetilde{\beta}_{\psi}(y)x+\widetilde{\gamma}_{\psi}(y),\\
\widetilde{\delta}_{\psi}(y)=\widetilde{\beta}_{\psi}(y)^2-4\widetilde{\alpha}_{\psi}(y)\widetilde{\gamma}_{\psi}(y), \quad
\delta_{\psi} (x)=\beta_{\psi} (x)^2-4\alpha_{\psi} (x)\gamma_{\psi} (x).
\end{multline}
The angle $\theta_{\psi}$ is also simply related to $\theta$ (see Figure~\ref{fig:theta-1/4}) as
\begin{equation}
\label{eq:angle-1/4}
\theta_{\psi}=\frac{\theta}{2}.
\end{equation}
The scheme of the proof is the same as the proof of Lemma~\ref{lem:angle-3/4}. Writing $ \theta_{\psi}= \arccos(-c_{\psi})$, we have $c_{\psi}=-\sqrt{\frac{1-c}{2}}$, hence \eqref{eq:angle-1/4}.
The functional equation \eqref{eq:functional_equation_octant/4} is very close to the functional equation in the three-quarter plane \eqref{eq:functional_equation_octant}, the only difference is the additional term $p_{0,1}x$. We can then write a similar boundary value problem as Lemma~\ref{lem:bvp}.

\begin{lem}
\label{lem:bvp-/4}
The generating function $\widetilde{D}_{\psi}(y)$ is analytic in $\mathcal{G}_{\mathcal L_{\psi}}$ and continuous on $\overline{\mathcal{G}_{\mathcal L_{\psi}}}\setminus \{1\}$. Moreover, for all $y\in\mathcal L_{\psi}\setminus \{1\}$, $\widetilde{D}_{\psi}(y)$ satisfies the following boundary condition
\begin{equation}
\label{eq:bvp_D_/4}
\frac{g_\psi(y)}{g_\psi'(y)}\widetilde{D}_{\psi}(y)-\frac{g_\psi(\bar{y})}{g_\psi'(\bar{y})}\widetilde{D}_{\psi}(\bar{y})=0,
\end{equation}
where $\frac{g_\psi(y)}{g_\psi'(y)}=\frac{\widetilde{w}_{\psi}(y)\sqrt{\widetilde{w}_{\psi}(Y_{\varphi}(x_{\psi,1}))-\widetilde{w}_{\psi}(y)}}{\widetilde{w}_{\psi}'(y)\sqrt{\widetilde{w}_{\psi}(Y_{\psi}(x_{\psi,1}))}}$ and  $\widetilde{w}_{\psi}$ is a conformal gluing function defined in Subsection~\ref{sub:ConformalGluingFunction}.

\end{lem}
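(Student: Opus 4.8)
The plan is to follow the proof of Lemma~\ref{lem:bvp} almost verbatim, the only new feature being the extra term $p_{0,1}x\,\widetilde f(1,1)$ on the right-hand side of \eqref{eq:functional_equation_octant/4}, and to argue that it is harmless. Since in the quadrant the generating functions genuinely converge, $\widetilde D_\psi(y)=\sum_{i\geq 1}\widetilde f(i,i)y^{i-1}$ is analytic on the unit disc $\mathcal{D}$ to begin with. First I would establish the analytic continuation: plugging $x=X_{\psi,0}(y)$ into \eqref{eq:functional_equation_octant/4} (for $y$ with $|X_{\psi,0}(y)|\leq 1$ in $\mathcal{D}$, as in the proof of Lemma~\ref{lem:bvp}) kills the left-hand side and, after using \eqref{eq:algebraic curve} to identify $X_{\psi,0}(y)\widetilde\alpha_\psi(y)+\tfrac12\widetilde\beta_\psi(y)=\tfrac12\sqrt{\widetilde\delta_\psi(y)}$, writes $\sqrt{\widetilde\delta_\psi(y)}\,\widetilde D_\psi(y)$ as a combination of $K_\psi(X_{\psi,0}(y),0)\widetilde L_\psi(X_{\psi,0}(y),0)$ and $(p_{0,1}X_{\psi,0}(y)+\tfrac12 p_{1,1})\widetilde f(1,1)$; the right-hand side has no singularity on $\big(\mathcal{G}_{\mathcal L_\psi}\cup\mathcal{D}\big)\setminus\mathcal{D}$ other than the branch cut $[y_1,y_4]$ of $\sqrt{\widetilde\delta_\psi}$ and of $X_{\psi,0}$, and this cut does not meet that region, so $\widetilde D_\psi$ continues analytically to $\mathcal{G}_{\mathcal L_\psi}$. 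For the finite limits on $\mathcal L_\psi\setminus\{1\}$ I would reason as in \cite[Sec.~3.3]{RaTr-18}: by the relation $X_{\psi,0}(Y_{\psi,0}(x))=x$ of \cite[Cor.~5.3.5]{FaIaMa-17}, the only possible poles come from the zeros of $x\widetilde\alpha_\psi(Y_{\psi,0}(x))+\tfrac12\widetilde\beta_\psi(Y_{\psi,0}(x))$ for $x\in[x_{\psi,1},1]$, which reduce to the zeros of $\widetilde\delta_\psi$ in $\big(\mathcal{G}_{\mathcal L_\psi}\cup\mathcal{D}\big)\setminus\mathcal{D}$, namely $y_{\psi,1}$, $1$, $y_{\psi,4}$, of which only $1$ lies in that set; hence $\widetilde D_\psi$ is continuous on $\overline{\mathcal{G}_{\mathcal L_\psi}}\setminus\{1\}$.

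Next I would derive the boundary condition. Evaluate \eqref{eq:functional_equation_octant/4} at $y=Y_{\psi,0}(x)$ for $x$ close to $[x_{\psi,1},1)$, so the left-hand side vanishes, and then let $x$ tend to a point of $[x_{\psi,1},1]$ from the upper and lower half-planes, producing conjugate points $y$ and $\bar y$ on $\mathcal L_\psi$. This is the only place the new term matters: both $K_\psi(x,0)\widetilde L_\psi(x,0)$ and $(p_{0,1}x+\tfrac12p_{1,1})\widetilde f(1,1)$ are analytic functions of the single variable $x$ with real Taylor coefficients in a neighborhood of the open segment $(x_{\psi,1},1)$ — for the first one because the quadrant series $\sum_{i\geq 1}\widetilde f(i+1,1)x^{i-1}$ converges for $|x|<1$ — so their boundary values from above and from below coincide and cancel in the difference. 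What remains, again via \eqref{eq:algebraic curve}, is exactly $\sqrt{\widetilde\delta_\psi(y)}\,\widetilde D_\psi(y)-\sqrt{\widetilde\delta_\psi(\bar y)}\,\widetilde D_\psi(\bar y)=0$ on $\mathcal L_\psi\setminus\{1\}$, the same homogeneous condition as \eqref{eq:bvp_inter}.

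Finally I would remove the non-meromorphic prefactor $\sqrt{\widetilde\delta_\psi}$ exactly as in the proof of Lemma~\ref{lem:bvp}: introduce $g_\psi$ by \eqref{eq:fct-g} with $\varphi$ replaced by $\psi$ and with the conformal gluing function $\widetilde w_\psi$ of $\mathcal{G}_{\mathcal L_\psi}$; use the $\psi$-version of the differential equation \eqref{eq:diffeqw} to obtain the closed form $\dfrac{g_\psi(y)}{g_\psi'(y)}=\dfrac{\widetilde w_\psi(y)\sqrt{\widetilde w_\psi(Y_\psi(x_{\psi,1}))-\widetilde w_\psi(y)}}{\widetilde w_\psi'(y)\sqrt{\widetilde w_\psi(Y_\psi(x_{\psi,1}))}}$ quoted in the statement; and check, as in \cite[Sec.~3.4]{RaTr-18}, that $g_\psi/g_\psi'$ is analytic in $\mathcal{G}_{\mathcal L_\psi}$ with finite limits on $\mathcal L_\psi$ and that $\sqrt{\widetilde\delta_\psi(y)}/\sqrt{\widetilde\delta_\psi(\bar y)}=\big(g_\psi(y)/g_\psi'(y)\big)\big/\big(g_\psi(\bar y)/g_\psi'(\bar y)\big)$ on $\mathcal L_\psi$. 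Substituting this ratio turns the condition of the previous paragraph into \eqref{eq:bvp_D_/4}. The only genuine obstacle is the bookkeeping around the extra $p_{0,1}x$ term, but since it enters only through a function of $x$ alone it drops out under the same-side/opposite-side subtraction, so no new analytic difficulty arises and the rest is a transcription of Lemma~\ref{lem:bvp}.
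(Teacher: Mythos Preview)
Your proposal is correct and follows exactly the approach the paper intends: the paper does not give a separate proof of Lemma~\ref{lem:bvp-/4} but simply notes before stating it that the functional equation \eqref{eq:functional_equation_octant/4} differs from \eqref{eq:functional_equation_octant} only by the additional term $p_{0,1}x$, so the argument of Lemma~\ref{lem:bvp} carries over. Your observation that this extra term depends on $x$ alone and therefore cancels when subtracting the upper and lower limits along $[x_{\psi,1},1]$ is precisely the one missing detail, and your handling of the analytic continuation and the replacement of $\sqrt{\widetilde\delta_\psi}$ by $g_\psi/g_\psi'$ mirrors the paper's proof of Lemma~\ref{lem:bvp} step by step.
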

Moreover, the expression of $\widetilde{D}_{\psi}(y)$ is the same as in the three-quarter plane and we have the following theorem, which is a quarter plane equivalent to Theorem~\ref{thm:expression-D-3/4}.

 \begin{thm}
The generating function $D_{\psi}(y)$ can be written as
\begin{equation}
\label{eq:expression-D-/4}
\widetilde{D}_{\psi}(y)=\frac{\widetilde{f}(1,1)}{\widetilde{w}_{\psi}'(0)}\frac{\pi}{\theta_{\psi}}\sqrt{-\frac{\widetilde{\delta}_{\psi}''(1)}{2\widetilde{\delta}_{\psi}(y)}}\sqrt{\widetilde{w}_{\psi}(Y_{\psi}(x_{\psi,1}))\left( \widetilde{w}_{\psi}(y)+\widetilde{W}_{\psi}(0)\right)},
\end{equation}
with $\theta_{\psi}$, $\widetilde{w}_{\psi}(y)$ and $\widetilde{W}_{\psi}$ defined in Subsection \ref{sub:ConformalGluingFunction} and $\widetilde{\delta}_{\psi}$ in \eqref{eq:noyaupsi}.
\end{thm}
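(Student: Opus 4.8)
The plan is to mirror exactly the argument that produced Theorem~\ref{thm:expression-D-3/4}, since the functional equation \eqref{eq:functional_equation_octant/4} differs from \eqref{eq:functional_equation_octant} only by the additive term $p_{0,1}x\,\widetilde f(1,1)$, and the boundary value problem \eqref{eq:bvp_D_/4} is \emph{identical in form} to \eqref{eq:bvp_D_3/4}. By Lemma~\ref{lem:bvp-/4} together with the same Liouville-type argument as in the proof of Lemma~\ref{lem:pol-harmfct} (transport the problem through $\widetilde w_\psi^{-1}$, observe that $\Delta_\psi\circ\widetilde w_\psi^{-1}$ extends holomorphically across the cut $I_\psi$, subtract off the finitely many poles, apply Liouville), we get that $\frac{g_\psi(y)}{g_\psi'(y)}\widetilde D_\psi(y)=P(\widetilde w_\psi(y))$ for some polynomial $P\in\mathbb R[y]$. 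The uniqueness of the positive harmonic function in the quadrant (hypotheses \ref{P1q}, \ref{P2q}, \ref{P3q}) then forces $\deg P=1$, exactly as in the proof of Theorem~\ref{thm:expression-D-3/4}; concretely one checks via \eqref{eq:asymt-CGF} and the differential equation for $\widetilde w_\psi$ (Remark~\ref{rem:CGF}) that a degree-$n$ choice of $P$ produces growth $(1-y)^{-\pi/(2\theta_\psi)-1}$ scaled by $n$ and sign information incompatible with positivity unless $n=1$.

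So write $\widetilde D_\psi(y)=\mu\,\dfrac{\widetilde w_\psi(y)+\nu}{g_\psi(y)/g_\psi'(y)}$. The two constants $\mu,\nu$ are pinned down by evaluating at $y=0$: since $\widetilde w_\psi$ is a conformal gluing function vanishing at $0$ (Remark~\ref{rem:CGF}), $g_\psi(0)/g_\psi'(0)=0$, and comparing leading terms of numerator and denominator at $y=0$ gives $\nu=-\widetilde w_\psi(0)=0$ and $\mu=\widetilde f(1,1)/\widetilde w_\psi'(0)$ — here I must be careful with the sign, which is why the statement carries $+\widetilde f(1,1)$ rather than the $-f(1,1)$ of the three-quarter case: the extra $p_{0,1}x$ term and the direction of monotonicity of $\widetilde w_\psi$ on the relevant real segment flip it. Finally, substitute the simplified closed form
\[
\frac{g_\psi(y)}{g_\psi'(y)}=\frac{\widetilde w_\psi(y)\sqrt{\widetilde w_\psi(Y_\psi(x_{\psi,1}))-\widetilde w_\psi(y)}}{\widetilde w_\psi'(y)\sqrt{\widetilde w_\psi(Y_\psi(x_{\psi,1}))}}
\]
from Lemma~\ref{lem:bvp-/4}, and use the square root of the differential equation \eqref{eq:diffeqw} (in its $\psi$-tagged version, Remark~\ref{rem:CGF}) to rewrite $\widetilde w_\psi'(y)$ in terms of $\sqrt{\widetilde\delta_\psi(y)}$, $\sqrt{\widetilde w_\psi(y)+\widetilde W_\psi(0)}$ and the factor $\frac{\pi}{\theta_\psi}\sqrt{-\widetilde\delta_\psi''(1)/2}$. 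After cancelling the common $\widetilde w_\psi(y)$ and collecting the radicals, one arrives at \eqref{eq:expression-D-/4}.

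The main obstacle, and the only place any real care is needed, is the bookkeeping of signs and branches: the monotonicity of $\widetilde w_\psi$ on $(X_\psi(y_{\psi,1}),1)$ determines which branch of the square root of \eqref{eq:diffeqw} is taken, and the presence of $+p_{0,1}x\,\widetilde f(1,1)$ (rather than $\tfrac12 p_{1,1}f(1,1)$ alone) in \eqref{eq:functional_equation_octant/4} changes the constant extracted at $y=0$; one has to verify that these two effects are consistent with the overall positive prefactor $\widetilde f(1,1)/\widetilde w_\psi'(0)$ in \eqref{eq:expression-D-/4}. Everything else — the continuation of $\widetilde D_\psi$ to $\mathcal G_{\mathcal L_\psi}$, the finiteness of limits on $\mathcal L_\psi\setminus\{1\}$, the Liouville step — is a verbatim transcription of the corresponding steps for the three-quarter plane (proof of Lemma~\ref{lem:bvp} and of Theorem~\ref{thm:expression-D-3/4}), and I would simply refer to those rather than repeat them.
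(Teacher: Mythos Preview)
Your approach is exactly the paper's own: the paper gives no separate proof for this theorem, simply stating that it is ``a quarter plane equivalent to Theorem~\ref{thm:expression-D-3/4}'' and that ``the expression of $\widetilde D_\psi(y)$ is the same as in the three-quarter plane.'' Your outline---same boundary value problem, same Liouville/polynomial argument from Lemma~\ref{lem:pol-harmfct}, degree one forced by uniqueness of the positive harmonic function, constants fixed at $y=0$, then substitute the closed form of $g_\psi/g_\psi'$ and the square-rooted differential equation for $\widetilde w_\psi'$---is precisely that transcription.

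One small correction on the sign bookkeeping: the extra term $p_{0,1}x\,\widetilde f(1,1)$ in \eqref{eq:functional_equation_octant/4} plays \emph{no} role in the sign of $\widetilde D_\psi$. It cancels when you subtract the upper- and lower-limit evaluations to form the boundary condition \eqref{eq:bvp_D_/4}, and it does not enter the determination of $\mu,\nu$ at $y=0$ (those come only from matching $\widetilde D_\psi(0)=\widetilde f(1,1)$ against the local expansion of $\mu\,\widetilde w_\psi(y)/(g_\psi(y)/g_\psi'(y))$). The sign difference between \eqref{eq:expression-D-/4} and Theorem~\ref{thm:expression-D-3/4} comes entirely from the branch choice when taking the square root of the $\psi$-tagged differential equation \eqref{eq:diffeqw}---i.e.\ from the monotonicity of $\widetilde w_\psi$ on the relevant real interval, which differs from the $\varphi$ case because $\theta_\psi=\theta/2<\pi/2$ while $\theta_\varphi=\pi-\theta/2>\pi/2$. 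Your second-named reason is the correct one; drop the first.
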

From the expression of $\widetilde{D}_{\psi}$ in \eqref{eq:expression-D-/4} and the functional equation \eqref{eq:functional_equation_octant/4}, we get an expression for $\widetilde{L}_{\psi}(x,0)$:
\begin{equation}
\label{eq:L-SW}
K_{\psi}(x,0)\widetilde{L}_{\psi}(x,0)=-\frac{1}{2}\frac{\widetilde{f}(1,1)}{\widetilde{w}_{\psi}'(0)}\frac{\pi}{\theta_{\psi}}\sqrt{-\frac{\widetilde{\delta}_{\psi}''(1)}{2}}\sqrt{1-\widetilde{W}(0)}\sqrt{1-W_{\psi}(x)}-(p_{0,1}x+\frac{1}{2}p_{1,1})\widetilde{f}(1,1),
\end{equation}
and from \eqref{eq:asymt-CGF} there exists $k\neq 0$ such that, for $x$ in the neighborhood of $1$,
\begin{equation}
K_{\psi}(x,0)\widetilde{L}_{\psi}(x,0)=K(x,0)\widetilde{L}(x,0)=\frac{k+o(1)}{(1-x)^{\pi/(2\theta_{\psi})}}=\frac{k+o(1)}{(1-x)^{\pi/\theta}}.
\end{equation}
This asymptotic results matches that in \cite{Ra-14}. We end this section with the example of the simple random walk. The application $\psi$ changes the simple random walk into the Gouyou-Beauchamps random walk (see Figure~\ref{fig:SW&GB}).
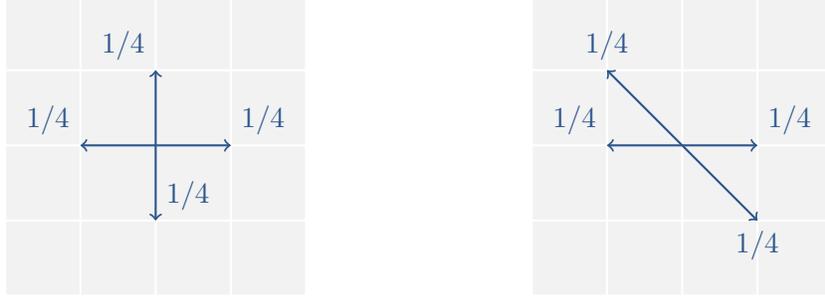
\begin{figure}[t]
\centering
\begin{tikzpicture}
\begin{scope}[scale=1, xshift=0 cm, yshift=0 cm]
\draw[white, fill=gray!10] (-2,-2) -- (-2,2) -- (2,2) -- (2,-2);
\draw[white, thick] (-2,-2) grid (2,2);
\draw[dblue!90, thick, ->] (0,0)--(1,0) node[above right]{$1/4$};
\draw[dblue!90, thick, ->] (0,0)--(-1,0) node[above left]{$1/4$};
\draw[dblue!90, thick, ->] (0,0)--(0,-1) node[above right]{$1/4$};
\draw[dblue!90, thick, ->] (0,0)--(0,1) node[above left]{$1/4$};
\end{scope}
\begin{scope}[scale=1, xshift=7 cm, yshift=0 cm]
\draw[white, fill=gray!10] (-2,-2) -- (-2,2) -- (2,2) -- (2,-2);
\draw[white, thick] (-2,-2) grid (2,2);
\draw[dblue!90, thick, ->] (0,0)--(1,0) node[above right]{$1/4$};
\draw[dblue!90, thick, ->] (0,0)--(-1,0) node[above left]{$1/4$};
\draw[dblue!90, thick, ->] (0,0)--(1,-1) node[below]{$1/4$};
\draw[dblue!90, thick, ->] (0,0)--(-1,1) node[above]{$1/4$};
\end{scope}
\end{tikzpicture}
\caption{Simple random walk (left) and Gouyou-Beauchamps random walk (right).}
\label{fig:SW&GB}
\end{figure} 
In order to compute $\widetilde{L}(x,0)$, we need to calculate $\theta_{\psi}$, $\widetilde{\delta}_{\psi}$, $\widetilde{w}_{\psi}$ and $W_{\psi}$. We easily have $\theta_\psi=\frac{\pi}{4}$ and 
\begin{equation*}
\left\{
\begin{array}{l c l}
\widetilde{\delta}_\psi(y)&=&-\frac{y(y-1)^2}{4},\\
y_4&=&\infty,\\
\widetilde{\delta}_\psi''(0)&=&1,\\
\widetilde{\delta}_\psi'''(0)&=&-3/2,\\
\widetilde{\delta}_\psi''(1)&=&-1/2,\\
\end{array}
\right.
\qquad
\text{we have then }\, \widetilde{W}_\psi(y)=-\frac{16y(y+1)^2}{(1-y)^4}.
\end{equation*}
On the other side, we have
\begin{equation*}
\left\{
\begin{array}{l c l}
\delta_\psi(x)&=&\frac{(x^2-6x+1)(x-1)^2}{16},\\
x_4&=&3+2\sqrt{2},\\
\delta_\psi'(x_4)&=&3\sqrt{2}+4,\\
\delta_\psi''(x_4)&=&\frac{11}{2}+3\sqrt{2},\\
\delta_\psi''(1)&=&-1/2,\\
\end{array}
\right.
\qquad
\text{we have then }\, W_\psi(x)=1-\frac{64x^2}{4(x-1)^4}.
\end{equation*}
Finally, noticing that $K(x,0)=\frac{x}{4}$ and that $\widetilde{H}(x,0)=\widetilde{f}(1,1)+\widetilde{L}(x,0)$, from \eqref{eq:L-SW} we get
\begin{equation}
\widetilde{H}(x,0)=\frac{\widetilde{f}(1,1)}{(1-x)^2}, 
\end{equation}
and this result matches the computation in \cite[Eq.\ 2.6]{Ra-14}.
\end{document}